\newcommand{\R}{\mathbb{R}}
\newcommand{\C}{\mathbb{C}}
\newcommand{\cP}{\mathcal{P}}
\newcommand{\cM}{\mathcal{M}}
\newcommand{\x}{\mathbf{x}}
\newcommand{\y}{\mathbf{y}}
\newcommand{\w}{\mathbf{w}}
\newcommand{\lims}{\mathop{\overline{\lim}}}
\renewcommand{\Im}{\mathop{\text{Im}}}
\newcommand{\placehold}{\mbox{\boldmath$\cdot$}}
\theoremstyle{plain}
\newtheorem{theorem}{Theorem}
\newtheorem{corollary}[theorem]{Corollary}
\newtheorem{lemma}[theorem]{Lemma}
\newtheorem*{question}{Question}
\newtheorem*{thm1}{Theorem \ref{generalmarr}}
\newtheorem*{thm4}{Theorem \ref{thm:momentcontinuity}}
\newtheorem*{generic}{Genericity Condition}
\newtheorem*{marr}{Marr Conjecture}
\newtheorem*{cor2a}{Corollary \ref{cor:OneGaussian}(a)}
\newtheorem*{edgethm}{Theorem \ref{subset}}
\newtheorem*{finitethm}{Theorem \ref{finitemomentexp}}
\theoremstyle{definition}
\begin{document}

\title{The Marr Conjecture and Uniqueness of Wavelet Transforms}

\author{Benjamin Allen\footnote{Dept. of Mathematics, Emmanuel College, Boston, MA 02215} \footnote{Program for Evolutionary Dynamics, Harvard University, Cambridge, MA, 02138} \footnote{Center for Mathematical Sciences and Applications, Harvard University, Cambridge, MA 02138} \ \  and Mark Kon\footnote{Dept. of Mathematics, MIT, Cambridge, MA 02139} \footnote{Dept. of Mathematics and Statistics, Boston University, Boston, MA 02215}}

%\author{Benjamin Allen\\Emmanuel College\\400 The Fenway\\Boston, MA 02115 \and
% Mark Kon\\Boston University\\111 Cummington St\\Boston, MA 02115}

\maketitle

\begin{abstract}
The inverse question of identifying a function from the nodes (zeroes) of its wavelet transform arises in a number of fields.  These include whether the nodes of a heat or hypoelliptic equation solution determine its initial conditions, and in mathematical vision theory the Marr conjecture, on whether an image is mathematically determined by its edge information.  We prove a general version of this conjecture by reducing it to the moment problem, using a basis dual to the Taylor monomial basis $x^\alpha$ on $\R^n$.
\end{abstract}

\section{Introduction}

\subsection{Background}

The inverse problem of determining a function $f$ from the nodes (zeroes) of its wavelet transform has various applications.  In partial differential equations this becomes the question of recovering the solution of a heat or hypoelliptic equation from its nodes.  In mathematical vision theory it is a generalization of the problem known as the Marr conjecture, about the unique determination of a function from its multiscale edges.  Here we give sufficient conditions on the wavelet and the function $f$ for its recovery, and show that these conditions are the best of their kind.

There has been both theoretical \cite{Meyer,Meyer2,Yuille,CSO,Hummel,Saito,Babaud}  and empirical \cite{Mallat} evidence related to the Marr conjecture, regarding both its range of validity and some restrictions on its scope. As shown by Meyer originally \cite{Meyer}, the truth of the Marr conjecture has limitations, and it is in general false for non-decaying $f$.  

It is shown here that for compactly supported or exponentially decaying $f$, the conjecture holds in a general form; however, it is false for algebraically decaying $f$.

The Marr conjecture was originally motivated by the fact that visual images are in practice often easy to reconstruct from their edges.  To this extent these results are a mathematical formalization of this fact.  In one dimension we apply our results to the Richter (Mexican hat) wavelet, which was the original convolving function studied by Marr \cite{Marr,Marrbook}.

The methods involve reducing the recovery of $f$ to the moment problem, using the duality of two bases for functions on $\R^n$, the Taylor monomials $x^\alpha$ and the derivatives $\delta^{(\alpha)}$ of the delta distribution at $0$. The method of moments provides a natural approach to this problem, as the effects of different moments become asymptotically separated under the wavelet transform. 

The standard $d$-dimensional continuous wavelet transform of $f$ with a smooth wavelet $\tilde\psi$ has the form 
\[
Wf(\sigma,\x)= \sigma^{d/2} \int_{\R^d}f(\mathbf{t}) \; \tilde\psi\left(\frac{\mathbf{t}-\x}{\sigma}\right)d\mathbf{t}=
\sigma^{d/2}f * \psi_\sigma(\x),
\]
where we define 
$\psi(\x)=\tilde\psi(-\x)$, and
$\psi_\sigma(\x)=\sigma^{-d}\psi(\x/\sigma)$ (this notation is used for later convenience).

We ask under what conditions a locally integrable function $f$ is uniquely determined (up to a constant multiple) by the nodes of its wavelet transform. It is in fact possible to answer a stronger version of this question, namely whether $f$ can be recovered from knowledge of the nodes of $Wf(\sigma,\x)$ at 
an arbitrary discrete sequence of scales $\{ \sigma_i \}_{i \geq 0}$.  

This type of question arises in a number of fields:
\begin{itemize}  
\item In wavelet theory, this is an inverse problem for the continuous wavelet transform \cite{Mallat,MallatBook,Meyer,Meyer2}, and the dyadic transform \cite{Mallat,MallatBook} (which is continuous in the space variable $\x$ but discrete in the scaling variable $\sigma$).
\item In mathematical vision theory \cite{Marrbook}, the function $f$ represents an image.  Convolutions of $f$ with rescalings of $\psi(\x) = G(\x) = (2\pi)^{-d/2}e^{-|\x|^2/2}$ represent Gaussian kernel smoothings (blurrings) of the image at different scales, which eliminate small features and maintain large ones.  Defining the Ricker (Mexican hat) wavelet $M(\x)$ as the Laplacian of $G(\x)$, it follows that the zeros of $f*M_\sigma(\x)$ represent points of maximal change in the smoothed image, which can be interpreted as edges of $f$ at scale $\sigma$ (generalized discontinuities).  Thus the nodes of $f*M_\sigma(\x)$ as $\sigma$ increases can be interpreted as successively sparser ``line sketches" of the image $f$.  The unique determination question (Marr conjecture) asks whether these nodes (edges) form a complete representation of the image.  The traditional focus on this question in mathematical vision theory has been based on the widespread use of edge perception as a model for vision.
\item For hypoelliptic partial differential equations, scaled smoothing functions often arise as fundamental solutions (Green's functions).  For example, the Gaussian function $u(x,t) = (2\pi t)^{-d/2}e^{-|\x|^2/2t}$ is the fundamental solution of the heat equation $u_t = \frac{1}{2} \Delta u$.  The solution to an initial value problem is obtained by convolution of the initial condition with the fundamental solution.  The question is then whether the nodes of a solution uniquely determine it.
\end{itemize}

In wavelet theory this question has been studied theoretically and
numerically by Mallat \cite{Mallat,MallatBook} and Meyer \cite{Meyer,Meyer2}, and the mathematical question in vision theory has also received a good deal of attention \cite{Marr, Marrbook,Yuille,CSO,Hummel,Saito,Babaud}.  Although the problem of determining nodes of parabolic equations and their properties has been studied in a number of settings \cite{angenent1988zero,lin1991nodal,watanabe2005zero}, the inverse problem of determining a solution from its nodes has received less attention. 

The mathematical conjecture in vision theory, known as the Marr conjecture \cite{Marr, Marrbook}, is motivated by problems of edge detection and image reconstruction in biological and artificial neural systems.  In this setting it is natural to restrict to functions $f$ that are compactly supported, or more generally, satisfy some decay condition.  The conjecture can be stated as

\begin{marr}
A locally integrable function $f$ of sufficiently rapid decay is uniquely
 determined, up to a constant multiple, by the zero sets of $f * M_{\sigma_i}$ for any sequence of positive scales $\{ \sigma_i \}_{i=1}^\infty$ tending to infinity.
\end{marr}

This conjecture has remained open, although special cases have been proved \cite{Yuille, CSO}.  The corresponding statement for nondecaying functions was disproved by Meyer \cite{Meyer}, who found distinct periodic functions whose Ricker wavelet transforms have identical zero sets at all scales.  

More generally, we can ask for minimal conditions on a general wavelet $\psi$ allowing for such unique determination:

\begin{question}
What conditions on a twice-differentiable function $\psi$ are necessary and sufficient to imply that any function $f$, of sufficiently rapid decay, is uniquely determined up to a constant multiple by (a) the zeros in $(\sigma,\x)$ of $Wf(\sigma,\x)$ (b) the zero sets of $f*\psi_{\sigma_i}$, for any sequence of positive scales $\{ \sigma_i \}_{i=1}^\infty$ tending to infinity.
\end{question}

\subsection{Results on unique determination}

Here we answer this question by finding conditions on $f$ and $\psi$ that are sufficient and the best of their type for such unique determination.  We require that $f$ be integrable and of exponential order---meaning that $f$ belongs to a class $\cP_\gamma'$ of exponentially decaying functions.  We require that $\psi$ belong to a class $\cP$ of smooth functions whose derivatives grow slower than exponentially, and satisfy the following:

\begin{generic}
The regular zero set of any derivative of fixed order $n$ is not contained in the zero set of any other derivative of fixed order $m$, for any $n,m \geq 0$.
\end{generic}

A regular (transverse) zero of a function $\psi$ is a point in all of whose neighborhoods $\psi(\x)$ takes both positive and negative values.
By ``derivative of fixed order $m$", we mean a linear combination of partial derivatives of $\psi$ of order $m$, (i.e., a homogeneous linear differential operator of order $m$ applied to $\psi$), modulo multiplication by a nonzero constant.  As an example, the one-dimensional Gaussian wavelet $G(x)$ fails this genericity condition, in that the regular zero set of $G$ is empty and is therefore trivially contained in the zero set of $G^{(n)}$ for any $n > 0$.  However its second derivative, the Ricker wavelet $M(x)$, satisfies this condition, as we show in Section \ref{gaussianproof}.

Our main result can be stated as follows:
\begin{theorem}
\label{generalmarr}
Given $\psi \in \cP$ satisfying the above genericity condition, any $f \in \cP_\gamma' \cap L^1(\R^d)$ is uniquely determined, up to a constant multiple, by the zero sets of its wavelet transform $f *\psi_{\sigma_i}$ for any sequence of positive scales $\{ \sigma_i \}_{i=1}^\infty$ tending to infinity.
\end{theorem}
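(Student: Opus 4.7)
The plan is to reduce the uniqueness statement to a moment-matching problem, exploiting the asymptotic separation of moments produced by the wavelet transform as $\sigma \to \infty$. Suppose $f, g \in \cP_\gamma' \cap L^1(\R^d)$ have wavelet transforms $f * \psi_{\sigma_i}$ and $g * \psi_{\sigma_i}$ sharing the same zero set $Z_i$ at each scale $\sigma_i \to \infty$. Because exponential decay makes both functions determined by their moments $m_\alpha(f) = \int \mathbf{t}^\alpha f(\mathbf{t})\,d\mathbf{t}$ (via analyticity of their moment generating functions), it suffices to show that these moment sequences are equal up to a single global constant $c$.

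The principal identity is the Taylor expansion of $\psi((\mathbf{x}-\mathbf{t})/\sigma)$ about $\mathbf{x}/\sigma$, which after rescaling $\mathbf{x} = \sigma\y$ yields
\[
\sigma^d (f * \psi_\sigma)(\sigma \y) \;=\; \sum_{n=0}^{\infty} \sigma^{-n} (L_n^f \psi)(\y), \qquad L_n^f := \sum_{|\alpha|=n} \frac{(-1)^n}{\alpha!}\, m_\alpha(f)\, \partial^\alpha.
\]
Here the $n$-th coefficient is a homogeneous differential operator of order $n$ applied to $\psi$ whose coefficients encode the order-$n$ moments of $f$. The defining properties of $\cP$ (subexponential derivative growth of $\psi$) together with $f \in \cP_\gamma'$ (exponential decay) should make this a uniformly controlled asymptotic expansion on compact $\y$-sets.

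Let $n_0(f)$ denote the smallest $n$ with $L_n^f \psi \not\equiv 0$, and define $n_0(g)$ analogously. Multiplying the shared-zero-set hypothesis by $\sigma_i^{\,d+\min(n_0(f),n_0(g))}$, taking $\sigma_i \to \infty$, and invoking persistence of regular (transverse) zeros under $C^0$-uniform perturbations on compact sets, I would conclude that the regular zero set of $L_{n_0(f)}^f \psi$ is mutually contained in that of $L_{n_0(g)}^g \psi$. The genericity condition then forces $n_0(f) = n_0(g) =: n_0$ together with $L_{n_0}^f \psi = c\, L_{n_0}^g \psi$ for a nonzero scalar $c$; reading off coefficients (using linear independence of $\{\partial^\alpha \psi\}_{|\alpha|=n_0}$, itself a consequence of genericity applied within a single order) yields $m_\alpha(f) = c\, m_\alpha(g)$ for every $|\alpha|=n_0$.

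Replacing $f$ by $h := f - cg$, the transform $h*\psi_{\sigma_i}$ still vanishes on $Z_i$, but the lowest nonvanishing moments of $h$ now sit at some higher order $n_1 > n_0$. For any regular zero $\y_0$ of the (already identified) leading term $L_{n_0}^f \psi$, persistence supplies points of $Z_i/\sigma_i$ approaching $\y_0$; the vanishing of $h*\psi_{\sigma_i}$ at those points, combined with the rescaled expansion, forces $L_{n_1}^h \psi(\y_0) = 0$. Since $\y_0$ was an arbitrary regular zero of an order-$n_0$ derivative, genericity rules this out unless $L_{n_1}^h \psi \equiv 0$, which by nondegeneracy of $\psi$'s derivatives means the order-$n_1$ moments of $h$ all vanish. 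Iterating in $n$ annihilates every moment of $h$, and exponential decay then yields $h \equiv 0$, i.e.\ $f = cg$. The main obstacle is the zero-set limit step: the rescaled zero sets $Z_i/\sigma_i$ need not converge as sets to the zero set of the limiting leading term, so the argument has to work locally at regular zeros, where transversality supplies the persistence; the genericity condition is precisely engineered to convert such local containments of regular zero sets into the algebraic identities among differential operators that pin down the moments order by order.
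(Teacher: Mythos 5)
Your proposal is correct and takes essentially the same route as the paper's proof: the locally uniform asymptotic moment expansion (Theorem \ref{thm:momentcontinuity}), persistence of regular (transverse) zeros to identify the leading fixed-order derivative through the genericity condition, repeated use of genericity at regular zeros of that leading term to force all higher-order moment combinations to vanish, and moment determinacy via analyticity of the Fourier transform in a strip for exponentially decaying $f$. Your reformulation via $h=f-cg$ is only a cosmetic repackaging of the paper's order-by-order moment recursion, so nothing essential differs.
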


We will show that the conditions in this theorem are the best of their kind, in the following sense.  First, the theorem fails if the exponential decay condition $f\in \cP_\gamma$ is weakened to algebraic decay (see Section \ref{necessity}), although this leaves the conjecture open for the restricted set of functions $f$ with decay that is between algebraic and exponential, e.g. $f(x)=e^{-|x|^{1/2}}$  Second, if the genericity condition on the regular zeroes of the wavelet $\psi$ (see above) fails weakly, then the theorem fails to hold (see Section \ref{generalproof}).

\begin{corollary}  Given $\psi$ and $f$ as above, $f$ is uniquely determined by the zero sets of its continuous wavelet transform $f*\psi_{\sigma_i}$ for $\sigma >0$, and more generally its dyadic wavelet transform $f*\psi_\sigma$ restricted to $\sigma = 2^i, i \in \mathbb{N}$. 
\end{corollary}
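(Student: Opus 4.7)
The plan is to derive this corollary as an immediate consequence of Theorem \ref{generalmarr}. The key observation is that both statements in the corollary concern zero-set information that contains, or equals, data at some sequence of scales $\{\sigma_i\}$ tending to infinity, which is precisely the hypothesis Theorem \ref{generalmarr} needs.

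First I would handle the continuous case. Knowledge of the zero sets of $f*\psi_\sigma$ for every $\sigma>0$ trivially includes knowledge of the zero sets along any specific sequence $\sigma_i\to\infty$ (for instance $\sigma_i=i$). Therefore any two functions $f_1,f_2\in\cP_\gamma'\cap L^1(\R^d)$ with identical continuous-transform zero sets have identical zero sets along this sequence, and Theorem \ref{generalmarr} then forces $f_1=cf_2$ for some constant $c$.

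Next I would handle the dyadic case. Here the hypothesis already supplies the zero sets of $f*\psi_\sigma$ at $\sigma=2^i$ for $i\in\mathbb{N}$, and the sequence $\sigma_i=2^i$ manifestly tends to infinity. So Theorem \ref{generalmarr}, applied verbatim to this sequence, yields the conclusion.

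There is no real obstacle here; the corollary is essentially a restatement of the main theorem restricted to two natural choices of scale sequence. The only thing worth remarking on is that the dyadic statement is strictly stronger in flavor than the continuous one (fewer scales are used), yet both fall under the same umbrella because Theorem \ref{generalmarr} permits \emph{any} sequence of positive scales tending to infinity rather than requiring a dense or continuous family.
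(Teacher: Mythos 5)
Your proposal is correct and matches the paper's (implicit) justification: the corollary is an immediate consequence of Theorem \ref{generalmarr}, since both the continuous family of scales and the dyadic sequence $\sigma = 2^i$ contain a sequence of positive scales tending to infinity, to which the theorem applies directly. No further argument is needed.
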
  

In the case of the Ricker (Gaussian derivative) wavelet, we prove the following:

\begin{corollary}
\label{cor:OneGaussian}
\emph{(Marr conjecture in one dimension)} 
\renewcommand{\labelenumi}{(\alph{enumi})}
\begin{enumerate}
\item Any $f \in \cP_\gamma' \cap L^1(\R^d)$ is uniquely determined, up to a constant multiple, by the zero sets of $ f *M_{\sigma_i}$ for any sequence of positive scales $\{ \sigma_i \}_{i=1}^\infty$ with a nonzero limit point.
\item This unique determination fails if the only limit point of $\{ \sigma_i \}_{i=1}^\infty$ is zero.
\item This unique determination also fails if $f$ is of algebraic rather than exponential order.
\end{enumerate}
\end{corollary}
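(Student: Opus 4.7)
The plan is to deduce part (a) from Theorem~\ref{generalmarr} via an analyticity-in-scale argument specific to the Ricker wavelet, and to establish (b) and (c) by counterexamples.

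For (a), I would first verify that $M=G''$ lies in $\cP$ and satisfies the required genericity condition; the latter is the content of Section~\ref{gaussianproof}. Suppose $f_1,f_2\in\cP_\gamma'\cap L^1(\R)$ have Ricker transforms sharing zero sets along a sequence $\sigma_i$ accumulating at some $\sigma^*\in(0,\infty)$. The structural fact I will exploit is that $F_j(\sigma,x):=f_j*M_\sigma(x)$ is jointly real-analytic on $(0,\infty)\times\R$, because $f_j*G_\sigma$ is a solution of the heat equation and $M=G''$. Near any regular zero $(\sigma^*,x_0)$ of $F_1(\sigma^*,\cdot)$, the implicit function theorem produces a real-analytic arc $\sigma\mapsto\varphi(\sigma)$ of zeros of $F_1$. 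The hypothesis forces $F_2(\sigma_i,\varphi(\sigma_i))=0$ for $\sigma_i$ near $\sigma^*$; real-analyticity of $\sigma\mapsto F_2(\sigma,\varphi(\sigma))$ combined with the accumulation of its zeros at $\sigma^*$ then forces identical vanishing on a neighborhood. A symmetric argument shows that the zero sets of $F_1(\sigma,\cdot)$ and $F_2(\sigma,\cdot)$ coincide for all $\sigma$ in some open interval $I\ni\sigma^*$. I would then propagate this local coincidence by an identity-principle argument for the joint real-analytic zero varieties $\{F_j=0\}\subset(0,\infty)\times\R$, extracting a subsequence $\sigma_j\to\infty$ of agreeing scales. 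Theorem~\ref{generalmarr} then yields $f_1=cf_2$.

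Parts (b) and (c) are negative. For (b), with $\sigma_i\to 0$ the sole accumulation point, the asymptotic $f*M_\sigma(x)=\sigma^2 f''(x)+O(\sigma^4)$ shows that only small-scale/local data of $f$ are probed, and each zero set at $\sigma_i$ is a discrete (hence countable) family of constraints. I would construct $f_1,f_2\in\cP_\gamma'\cap L^1$, non-proportional, by starting from a fixed $f_1$ and adding a summable correction $h$ chosen iteratively to align zero sets at each successive $\sigma_i$ along a sufficiently sparse sequence. For (c), the construction of Section~\ref{necessity} exhibits two non-proportional algebraically-decaying functions with identical Ricker-transform zero sets at every scale; the failure is that algebraic decay permits distinct functions to share all finite moments, collapsing the moment-problem core on which Theorem~\ref{generalmarr} rests.

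The principal technical obstacle is the global propagation step in (a): extending zero-set coincidence from a neighborhood of $\sigma^*$ to arbitrarily large $\sigma$. Higher-order or coalescing zeros of $F_j(\sigma,\cdot)$ preclude a naive implicit-function-theorem continuation, and real-analytic identity principles for zero \emph{varieties} (as opposed to functions) are more delicate than their complex-analytic counterparts. I expect to handle this either by restricting to the dense subset of $\sigma$ at which all zeros of $F_j(\sigma,\cdot)$ are simple, or by invoking a more refined analytic-continuation argument tied to the heat-equation structure underlying $F_j$.
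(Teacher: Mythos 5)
Your part (a) strategy has a genuine gap exactly where you flag it, and the missing ingredient is the heat-equation geometry that the paper develops in Section~\ref{edgesec}. Your local step (implicit function theorem at a regular zero of $F_1(\sigma^*,\cdot)$ plus one-variable analyticity along the arc) only shows that certain arcs of each zero variety lie inside the other near $\sigma^*$; it says nothing about zeros at large scales that lie on contour components which never descend to a neighborhood of $\sigma^*$. An ``identity principle for zero varieties'' cannot exclude such components: a contour of $F_2$ could in principle be born at scales above your interval $I$, or approach an intermediate scale asymptotically as $x\to\pm\infty$, and then the zero sets at large $\sigma$ would be unconstrained by the local coincidence. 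Ruling this out is precisely the content of Lemma~\ref{nocreate1}, Corollaries~\ref{nominimum}--\ref{nomaximum} and Theorem~\ref{subset}, whose proof needs the parabolic maximum principle \emph{and} the exponential decay $f\in\cP_\gamma'$ (the decay is what kills contours asymptotic to an intermediate scale); your fallback of restricting to scales where all zeros are simple does not touch this issue. The paper's route is also structurally different and lighter than yours: rather than proving that the full zero sets of $F_1$ and $F_2$ agree at a sequence of scales tending to infinity, it shows (via Theorem~\ref{subset} and Lemma~\ref{nocreate1}) that there are exactly $n_0+2$ persistent edge contours, that each of them crosses every line $t=t_j$, and that the data at scales accumulating at $t'>0$ therefore determine these analytic curves by continuation; the infinite-limit case (Theorem~\ref{generalmarr} with the Schur/Hermite genericity check, which you correctly identify) then applies to the persistent contours alone.

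Parts (b) and (c) are not proved in your proposal. For (b), ``countably many constraints per scale'' plus an unspecified iterative correction is not a construction; the paper's mechanism is concrete: it builds $h$ as an $L^1$-convergent sum of compactly supported pieces with four-point-mass second derivatives, arranged so that $G$ and $G+h$ each have exactly two edge contours, symmetric about the $\sigma$-axis, which are forced by alternating sign conditions to cross at a sequence of scales $\sigma_k\to 0$; symmetry is what converts crossings into exact equality of the (two-point) zero sets at those scales. For (c), you simply cite Section~\ref{necessity}, which is the proof you were asked to supply, and your heuristic is both insufficient and inaccurate: two algebraically decaying functions sharing all existing moments do \emph{not} automatically have coinciding zero sets at any scale (and the paper's pair has coinciding zero sets only along a sequence of scales tending to infinity, not at every scale). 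The actual argument requires the inductive interweaving construction in $\cM_N'$, the finite-moment asymptotic expansion (Theorem~\ref{finitemomentexp}), and the Part~2 analysis excluding divergent persistent contours, none of which is reflected in your sketch.
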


For dimensions $d>1$, the above theorem reduces the Marr conjecture to a statement about polynomial zeros.  For any multiindex of nonegative integers $\alpha = (\alpha_1, \ldots, \alpha_d)$, we define the \emph{Laplace-Hermite polynomial} $L_\alpha(\x)$ in $\x = (x_1, \ldots, x_d)$ by 
\begin{equation}
\label{eq:LHrelation}
\Delta G^{(\alpha)}(\x) = (-1)^{|\alpha|} L_\alpha(\x) G(\x).
\end{equation}
Above, the superscript $(\alpha)$ indicates a mixed partial derivative in the orders specified by $\alpha$. 
Note that $L_\alpha$ is a polynomial of degree $|\alpha|+2$, where $|\alpha|=\alpha_1 + \ldots+\alpha_d$.  We thus have:

\begin{corollary}
\label{cor:dGaussian}
\emph{(Marr conjecture in $d$ dimensions)} 
If there is no pair of distinct Laplace-Hermite polynomials of degree greater than zero such that the zero set of one contains the zero set of the other, then any $f \in \cP_\gamma' \cap L^1(\R)$ is uniquely determined, up to a constant multiple, by the zero sets of $f *M_{\sigma_i}$ for any sequence of positive scales $\{ \sigma_i \}_{i=1}^\infty$ tending to infinity.
\end{corollary}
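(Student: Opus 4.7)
The plan is to deduce Corollary \ref{cor:dGaussian} from Theorem \ref{generalmarr} applied to $\psi = M = \Delta G$. Two hypotheses must be verified: (i) $M \in \cP$, which is immediate since $M(\x) = (|\x|^2-d)G(\x)$ is Gaussian-decaying, as are all its derivatives; and (ii) $M$ satisfies the genericity condition. Everything nontrivial is in (ii).

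The key step is to rewrite derivatives of $M$ in polynomial form using (\ref{eq:LHrelation}). An arbitrary homogeneous order-$m$ derivative of $M$ has the form
\[
D_m M(\x) \;=\; \sum_{|\alpha|=m} c_\alpha\, \Delta G^{(\alpha)}(\x) \;=\; (-1)^m\, G(\x)\, P_m(\x), \qquad P_m := \sum_{|\alpha|=m} c_\alpha L_\alpha.
\]
A short leading-term computation shows that the degree $m+2$ part of $P_m$ equals $\bigl(\sum_{|\alpha|=m} c_\alpha \x^\alpha\bigr)|\x|^2$, so $P_m$ is a nonzero polynomial of degree exactly $m+2$ whenever not all $c_\alpha$ vanish. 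Since $G>0$ pointwise, the (regular) zero set of $D_m M$ coincides with the (regular) zero set of $P_m$, and $D_m M, D_n M$ are proportional iff $P_m, P_n$ are. The genericity condition for $M$ thus reduces to the polynomial statement that for any two non-proportional $P_m \in \mathrm{span}\{L_\alpha:|\alpha|=m\}$ and $P_n \in \mathrm{span}\{L_\alpha:|\alpha|=n\}$, the regular zero set of $P_m$ is not contained in the zero set of $P_n$.

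From here I would close the argument in two steps. First, the regular zero set of a nonzero real polynomial is Zariski-dense in the union of the odd-multiplicity real irreducible components of its vanishing locus; any polynomial vanishing on this dense set also vanishes on those components, so containment of regular zero sets upgrades (after mild leading-term checks to exclude purely even-multiplicity components) to full real zero set containment $\{P_m=0\} \subseteq \{P_n=0\}$. Second, this polynomial containment must be pushed back to the level of the basis polynomials $L_\alpha, L_\beta$ featured in the hypothesis. A natural route is a generic-coefficient argument on $(c_\alpha)$ and $(c_\beta)$: the subset of coefficient tuples for which the inclusion persists is Zariski-closed, and if it is proper one perturbs toward basis vectors to extract a pair of distinct multi-indices with $\{L_\alpha=0\} \subseteq \{L_\beta=0\}$, contradicting the hypothesis of the corollary.

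The main obstacle is precisely this second reduction, from inclusions among linear combinations to inclusions among individual Laplace-Hermite polynomials; the first real-algebraic step is comparatively standard. Once both are in place, the genericity condition for $M$ is verified and Theorem \ref{generalmarr} delivers the conclusion that every $f \in \cP_\gamma' \cap L^1(\R^d)$ is determined up to scalar multiple by the zero sets of $f*M_{\sigma_i}$ along any scale sequence $\sigma_i\to\infty$.
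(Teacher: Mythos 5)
Your first half---$M\in\cP$, the identity $M^{(\alpha)}=(-1)^{|\alpha|}L_\alpha G$, positivity of $G$, so that every fixed-order derivative of $M$ is $\pm G$ times a fixed-order combination $\sum_{|\alpha|=m}c_\alpha L_\alpha$ of degree exactly $m+2$, after which Theorem \ref{generalmarr} is invoked---is exactly the route the paper takes, and your leading-term computation is correct. But the paper stops there: it reads the hypothesis of Corollary \ref{cor:dGaussian} as being precisely the genericity condition for $M$ expressed through Laplace--Hermite polynomials (and only verifies that condition numerically for $d=2$, $n=0$, $m\leq 15$); it makes no attempt to pass from individual $L_\alpha$'s to linear combinations, nor from full zero sets to regular zero sets. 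The two bridging steps you add are therefore the entire content of your proposal beyond what the paper does, and the second one is a genuine gap---as you yourself flag.

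The sketched fix for that step does not work. First, it is not established that the set of coefficient tuples for which $\{P_m=0\}\subseteq\{P_n=0\}$ is Zariski-closed; containment of real zero sets is not an algebraic condition on coefficients in any evident way. Second, even granting closedness, the logic runs backwards: you are handed a single pair of combinations exhibiting containment (the putative violation of genericity) and need to conclude containment for a basis pair $(L_\alpha,L_\beta)$; membership of one point in a closed set says nothing about membership of the basis points, and perturbation arguments transfer open conditions, not closed ones, so no contradiction with the corollary's hypothesis is extracted. The first step is also shakier than ``mild leading-term checks'': genericity requires in particular that every nonzero fixed-order combination have a nonempty regular zero set (recall the paper's remark that $G$ itself fails genericity precisely because its regular zero set is empty), and a combination $\sum_{|\alpha|=m}c_\alpha L_\alpha$ that vanishes somewhere without changing sign is not excluded by a hypothesis about full zero-set containments between individual $L_\alpha$'s. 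So as written the argument does not close: either you interpret the corollary's hypothesis, as the authors implicitly do, as the genericity condition for all fixed-order derivatives of $M$ (regular zeros, arbitrary same-order combinations), in which case the corollary is an immediate specialization of Theorem \ref{generalmarr}, or you must actually prove the combination-to-basis and regular-to-full reductions, which neither your sketch nor the paper supplies.
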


Thus in any dimension the Marr conjecture is equivalent to a condition on the zeros of Laplace-Hermite polynomials.

\subsection{Results on asymptotic moment expansions}
\label{sec:intromoments}

Our approach is based on moment expansions, which rely on the duality of the basis of Taylor monomials $\x^\alpha = x_1^{\alpha_1}\ldots x_d^{\alpha_d}$ in $\R^d$, with distributions $\delta^{(\alpha)}$ localized at the origin.  Here $\delta^{(\alpha)}$ denotes a distributional partial derivative of the Dirac distribution $\delta$ in the orders specified by the multiindex $\alpha$.  The moment expansion represents a function as a series in $\delta^{(\alpha)}$, with coefficients in terms of the function's moments.  

Moment expansions have been used to study electromagnetism (in multipole expansions), gravitation, and acoustics.  They have more recently also been applied to the Navier-Stokes \cite{Wayne1, Uminsky} and other differential equations \cite{Loura2, Kanwal1, Kanwal2, Hernandez, Wiener}.   Recently, a formalism for asymptotic moment expansions has been developed \cite{Estrada}, in which the moment expansion converges as an asymptotic series.

We extend the theory of asymptotic moment expansions in two ways.  First, we prove the following continuity result for convolutions of moment expansions:
\begin{theorem}
\label{thm:momentcontinuity}
If $f$ is replaced by its asymptotic moment expansion in the convolution $f * \psi_\sigma (\sigma \w)$, the asymptotic convergence of the resulting series, as $\sigma \to \infty$, is locally uniform in $\w$.
\end{theorem}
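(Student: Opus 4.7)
The plan is to reduce the statement to a Taylor remainder estimate that is uniform in $\w$ on compact sets. Starting from the definition,
\[
\sigma^d f * \psi_\sigma(\sigma \w) \;=\; \int_{\R^d} f(\mathbf{t})\,\psi\!\left(\w - \mathbf{t}/\sigma\right) d\mathbf{t},
\]
I would Taylor expand $\psi$ about $\w$ to order $N$:
\[
\psi(\w - \mathbf{t}/\sigma) = \sum_{|\alpha| < N} \frac{(-1)^{|\alpha|}}{\alpha!}\,\psi^{(\alpha)}(\w)\,(\mathbf{t}/\sigma)^\alpha + R_N(\w, \mathbf{t}/\sigma),
\]
with $R_N$ a Lagrange remainder of order $N$. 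Integrating termwise against $f$ produces $\sum_{|\alpha|<N} (-1)^{|\alpha|} \mu_\alpha \psi^{(\alpha)}(\w)/(\alpha!\,\sigma^{|\alpha|})$, where $\mu_\alpha = \int \mathbf{t}^\alpha f(\mathbf{t})\,d\mathbf{t}$. After multiplication by $\sigma^{-d}$, this is exactly the partial sum through order $|\alpha|<N$ obtained by formally substituting the moment expansion $f \sim \sum_\alpha \frac{(-1)^{|\alpha|}}{\alpha!}\mu_\alpha \delta^{(\alpha)}$ and convolving termwise with $\psi_\sigma$ at $\sigma\w$, using $\delta^{(\alpha)} * \psi_\sigma(\sigma\w) = (-1)^{|\alpha|}\sigma^{-d-|\alpha|}\psi^{(\alpha)}(\w)$.

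What remains is to show that, for each $N$ and each compact $K \subset \R^d$,
\[
\sup_{\w \in K}\,\left|\int_{\R^d} f(\mathbf{t})\,R_N(\w,\mathbf{t}/\sigma)\,d\mathbf{t}\right| \;=\; o\bigl(\sigma^{-N+1}\bigr)
\]
as $\sigma \to \infty$. I would split the $\mathbf{t}$-integral at $|\mathbf{t}| \le \sigma^{1/2}$. On the inner region, the Lagrange intermediate point $\w - \theta\mathbf{t}/\sigma$ stays in a fixed compact neighborhood of $K$ for $\sigma$ large, so $|\psi^{(\beta)}|$ with $|\beta|=N$ is uniformly bounded there; the remainder then contributes at most a $K$-dependent constant times $\sigma^{-N}\!\int |f(\mathbf{t})|\,|\mathbf{t}|^N d\mathbf{t} = O(\sigma^{-N})$, uniformly in $\w \in K$, since $f \in \cP_\gamma' \cap L^1$ has all polynomial moments finite.

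On the outer region $|\mathbf{t}| > \sigma^{1/2}$, I would use that membership in $\cP$ forces $|\psi^{(\beta)}|$ to grow slower than any exponential $e^{\epsilon|\,\cdot\,|}$, while $f \in \cP_\gamma'$ provides genuine exponential decay $|f(\mathbf{t})| \le Ce^{-\gamma|\mathbf{t}|}$. Choosing $\epsilon < \gamma$, the integrand $|f(\mathbf{t})|\,|\psi^{(\beta)}(\w - \theta\mathbf{t}/\sigma)|$ is dominated, uniformly over $\w \in K$, $\theta \in [0,1]$, and $\sigma \ge 1$, by an integrable function with super-polynomial tails, so the outer contribution is $o(\sigma^{-k})$ for every $k$. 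The hard part is exactly this uniform control: the Lagrange intermediate point can leave every fixed compact set when $\mathbf{t}$ is large, and only the interplay between the sub-exponential growth of $\psi$'s derivatives (built into $\cP$) and the true exponential decay of $f$ (built into $\cP_\gamma'$) is strong enough to bound the remainder uniformly in $\w \in K$.
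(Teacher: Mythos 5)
Your first step coincides with the paper's: write $\sigma^d f*\psi_\sigma(\sigma\w)$ as the pairing of $f$ with $\psi(\w-\placehold/\sigma)$, Taylor-expand $\psi(\w-\y)$ in $\y$ about $\y=\mathbf{0}$, and observe that the polynomial part reproduces exactly the moment-expansion partial sums. The divergence is in how the remainder is handled, and this is where there is a genuine gap. The theorem (in its precise form in Section~\ref{sec:intromoments} and Section~\ref{moment}) is stated for arbitrary $f\in\cP'$, i.e.\ for distributions dual to $\cP$ --- the objects for which the asymptotic moment expansion of \cite{Estrada} is defined --- not only for integrable functions with exponential decay. Your estimate of the remainder is carried out by writing $\int_{\R^d} f(\mathbf{t})\,R_N(\w,\mathbf{t}/\sigma)\,d\mathbf{t}$ and splitting the $\mathbf{t}$-integral, which presupposes that $f$ is a pointwise-defined function and, moreover, that it obeys a pointwise bound $|f(\mathbf{t})|\le Ce^{-\gamma|\mathbf{t}|}$. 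Neither is available in general: an element of $\cP'$ need not be a function at all (combinations of $\delta^{(\alpha)}$'s are the typical examples in this paper), and even for $f\in\cP_\gamma'\cap L^1$ membership in the dual space yields only an integrated bound of the type $\int|f(\x)|\,e^{\lambda|\x|}\,d\x<\infty$ for $\lambda<\gamma$, not pointwise exponential decay (the latter defect is minor, since your inner/outer estimates only need the integrated bound, but the former is not). To prove the stated theorem one must bound the pairing $\langle f,\rho_{N,\w}(\placehold/\sigma)\rangle$ using only the continuity of $f$ as a functional on $\cP$, which means controlling the rescaled Taylor remainder in \emph{every} seminorm $\|\cdot\|_{\gamma,\alpha}$ of $\cP$ --- including all $\y$-derivatives with exponential weights --- locally uniformly in $\w$. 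That uniform seminorm estimate is exactly the content of the paper's Lemma~\ref{rholemma}, and it is the step your proposal is missing; your pointwise Lagrange bound on $R_N$ does not by itself control the derivative seminorms that a general $f\in\cP'$ may invoke.

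That said, the analytic mechanism you identify is the right one, and in fact mirrors the paper's lemma: your split at $|\mathbf{t}|\le\sigma^{1/2}$ (intermediate point stays near $K$, use vanishing of low-order Taylor data) versus $|\mathbf{t}|>\sigma^{1/2}$ (trade sub-exponential growth of $\psi^{(\beta)}$ against exponential decay) is the concrete, function-level analogue of the lemma's dichotomy between $\y_j/\sigma_j\to\mathbf{0}$ and $\y_j/\sigma_j$ bounded away from the origin. So your argument does establish the locally uniform asymptotic expansion for the restricted class of exponentially decaying integrable functions --- which happens to suffice for the application to Theorem~\ref{generalmarr} --- but it proves a strictly weaker statement than Theorem~\ref{thm:momentcontinuity} as the paper states and uses it; to close the gap you would need to upgrade your remainder estimates to hold in all the weighted $C^\infty$ seminorms of $\cP$ and then conclude by continuity of the functional $f$, which is precisely the route the paper takes.
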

Second, we generalize the theory of asymptotic moment expansions to distributions with only finitely many moments:
\begin{theorem} 
\label{finitemomentexp}
If the first $n$ moments of $f$ are well-defined, then $f$ has an asymptotic moment expansion to order $n-1$.  If $f$ is replaced by this moment expansion in the convolution $f * \psi_\sigma (\sigma w)$, the asymptotic convergence of the resulting series, as $\sigma \to \infty$, is locally uniform in $w$.
\end{theorem}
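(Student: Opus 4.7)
The plan is to establish Theorem \ref{finitemomentexp} by Taylor-expanding $\psi$ inside the convolution integral and controlling the resulting remainder by dominated convergence, mirroring the method for Theorem \ref{thm:momentcontinuity} but truncated at the available number of moments. The truncated moment expansion of $f$ to order $n-1$ is the distribution $\sum_{|\alpha|\leq n-1}\tfrac{(-1)^{|\alpha|}}{\alpha!}m_\alpha(f)\,\delta^{(\alpha)}$, whose coefficients $m_\alpha(f)=\int\mathbf{t}^\alpha f(\mathbf{t})\,d\mathbf{t}$ are well-defined precisely because the first $n$ moments of $f$ exist. Convolving this distribution with $\psi_\sigma$ and evaluating at $\sigma\w$ yields
\[
S_{n-1}(\sigma,\w) := \sum_{|\alpha|\leq n-1}\frac{(-1)^{|\alpha|}}{\alpha!}\,m_\alpha(f)\,\sigma^{-d-|\alpha|}\psi^{(\alpha)}(\w),
\]
so the theorem reduces to showing $f*\psi_\sigma(\sigma\w) - S_{n-1}(\sigma,\w) = o(\sigma^{-d-(n-1)})$ as $\sigma\to\infty$, locally uniformly in $\w$.

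I would rewrite the convolution as $\sigma^{-d}\int f(\mathbf{t})\,\psi(\w-\mathbf{t}/\sigma)\,d\mathbf{t}$ and apply Taylor's theorem to $\psi$ about $\w$ with Lagrange remainder at order $n-1$, then rearrange to obtain
\[
\psi(\w-\mathbf{t}/\sigma) = \sum_{|\alpha|\leq n-1}\frac{(-\mathbf{t}/\sigma)^\alpha}{\alpha!}\psi^{(\alpha)}(\w) + \sum_{|\alpha|=n-1}\frac{(-\mathbf{t}/\sigma)^\alpha}{\alpha!}\bigl[\psi^{(\alpha)}(\w-\theta\mathbf{t}/\sigma)-\psi^{(\alpha)}(\w)\bigr],
\]
with $\theta=\theta(\w,\mathbf{t},\sigma)\in(0,1)$. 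Integrating the principal part against $f$ reproduces $S_{n-1}(\sigma,\w)$ exactly. Factoring $\sigma^{-d-(n-1)}$ out of the remainder, the task reduces to showing that each integral
\[
I_\alpha(\sigma,\w) := \int f(\mathbf{t})\,\mathbf{t}^\alpha\bigl[\psi^{(\alpha)}(\w-\theta\mathbf{t}/\sigma)-\psi^{(\alpha)}(\w)\bigr]\,d\mathbf{t}
\]
tends to $0$ uniformly in $\w$ over any fixed compact set $K$ as $\sigma\to\infty$. I would split the $\mathbf{t}$-integral at $|\mathbf{t}|=\sigma^{\delta}$ for some $\delta\in(0,1)$: on the inner region, uniform continuity of $\psi^{(\alpha)}$ on a bounded enlargement of $K$ forces the bracket uniformly small, while $\int|\mathbf{t}|^{n-1}|f(\mathbf{t})|\,d\mathbf{t}<\infty$ provides an integrable weight; on the outer region, the tail $\int_{|\mathbf{t}|>\sigma^\delta}|\mathbf{t}|^{n-1}|f(\mathbf{t})|\,d\mathbf{t}\to 0$ by absolute continuity of the integral, and the bracket is dominated using the growth hypothesis on $\psi\in\cP$.

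The technical obstacle is dominating $|\psi^{(\alpha)}(\w-\theta\mathbf{t}/\sigma)|$ on the outer region uniformly in $\sigma$: subexponential growth only yields $|\psi^{(\alpha)}(\w-\theta\mathbf{t}/\sigma)|\lesssim e^{\varepsilon|\mathbf{t}|/\sigma}$ for any $\varepsilon>0$, which fails to be uniformly bounded once $|\mathbf{t}|\gtrsim \sigma$. The resolution is to take $\delta$ sufficiently small that the exponential factor is absorbed by the vanishing tail, exploiting the implicit compatibility between the decay of $f$ and the growth of $\psi$ that makes $f*\psi_\sigma$ well-defined in the first place. With this estimate in hand, uniform convergence on compact $\w$-sets follows from the inner compact-enlargement bound combined with the $\w$-independent outer domination, yielding both the existence of the order-$(n-1)$ moment expansion and the locally uniform asymptotic convergence asserted.
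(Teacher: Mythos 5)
There is a genuine gap, and it sits exactly at the point you flag as a ``technical obstacle.'' In the paper's precise formulation of this theorem, $f$ is a distribution in $\cM_N'$ and $\psi$ lies in $\cM_N$, the class of smooth functions with $|\psi^{(n)}(x)|\lesssim(1+|x|)^{N-n}$, and the expansion is taken only to order $M\leq N-1$, strictly below the available moment order. Your proposal instead keeps $\psi\in\cP$ (subexponential growth) and assumes only finiteness of the moments that actually appear in the expansion. Under those hypotheses the claim is not even well-posed: if $f$ decays algebraically (say $f(t)=(1+|t|)^{-n-2}$) while $\psi^{(\alpha)}$ grows like $e^{|x|^{1/2}}$, then the integral defining $I_\alpha(\sigma,\w)$ --- indeed the convolution $f*\psi_\sigma$ itself --- diverges for every $\sigma$. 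No choice of the splitting exponent $\delta$ rescues the outer region: the dangerous contribution comes from $|\mathbf{t}|\gg\sigma$, where $e^{\varepsilon|\mathbf{t}|/\sigma}$ is unbounded, and shrinking $\delta$ only enlarges that region. The ``implicit compatibility between the decay of $f$ and the growth of $\psi$'' is not a hypothesis you have, and in this generality it simply fails.

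The paper repairs this by changing the functional-analytic frame rather than by a finer splitting. The Taylor remainder $\rho_{M,w}(y)=\psi(w-y)-P_M(w,y)$ is estimated in the H\"ormander-type seminorms $||\;||_{N,n}$ of $\cM_N$, via an analogue of Lemma \ref{rholemma} proved by a three-case analysis; your outer region corresponds to the case $|y_j/\sigma_j|$ bounded away from zero, which is killed there by the polynomial bound $(1+|y/\sigma|)^{N-n}$ together with the factor $\sigma^{M-N}\to 0$ --- this is precisely where the strict margin $M<N$ (one more order of decay of $f$ than the order of the expansion) is used, a margin your proposal does not keep since you expand to the top available order. The conclusion $\sigma^{M}\langle f,\rho_{M,w}(\placehold/\sigma)\rangle\to 0$, locally uniformly in $w$, then follows from continuity of $f$ as a functional on $\cM_N$; this matters because the paper's $f$ is a distribution, so your dominated-convergence manipulations of $\int f(\mathbf{t})(\cdots)\,d\mathbf{t}$ are not available and the seminorm estimate is the necessary substitute. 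To fix your argument you would need to (i) replace $\cP$ by the polynomial-growth class $\cM_N$, so the outer factor is $(1+|\mathbf{t}|/\sigma)^{N-n}$ rather than $e^{\varepsilon|\mathbf{t}|/\sigma}$, (ii) assume decay of $f$ of strictly higher order than the expansion order (distributionally, $f\in\cM_N'$ with expansion order at most $N-1$), and (iii) phrase the remainder bound in seminorm form so it applies to distributional $f$.
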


\subsection{Results on the geometry of heat equation nodes}

Our work leads to new results on the nodes of solutions to the heat equation initial value problem:
\begin{equation}
\label{heateqn}
\begin{cases} F_t = \frac{1}{2}  F_{xx} & x \in (-\infty, \infty),\, t\in [0, \infty)\\
F(x,0) = f(x). \end{cases}
\end{equation}
The nodes (zeros) of $F_{xx}$ form algebraic curves which we call \emph{edge contours} of $f$. We show that new edge contours do not appear as $t$ increases, strengthening and complementing previous results \cite{angenent1988zero,Yuille,Babaud, Hummel,lin1991nodal,watanabe2005zero}: 

\begin{theorem}
\label{subset}
For an integrable function $f$ of exponential order and for positive numbers $t_1 < t_2$, the edge contours of $f$ intersecting the line $t=t_2$ are a subset of those that intersect $t=t_1$.
\end{theorem}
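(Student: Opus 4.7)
The plan is to exploit the fact that $G := F_{xx}$ itself solves the heat equation $G_t = \tfrac{1}{2} G_{xx}$---obtained by differentiating \eqref{heateqn} twice in $x$---and then to invoke the structure theory for zero sets of one-dimensional parabolic solutions due to Angenent \cite{angenent1988zero} together with its extensions \cite{lin1991nodal,watanabe2005zero}. Since $f$ is integrable and of exponential order, $G$ is real-analytic in $x$ for each $t > 0$ and decays uniformly in $x$ on compact subintervals of $t \in (0,\infty)$, which places $G$ squarely within the setting of these theorems.

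The key input is a no-birth principle: a connected component $C$ of the zero set $Z := G^{-1}(0) \subset \R \times (0,\infty)$ cannot have its time-projection $\pi_t(C)$ possess a positive, attained infimum $t^\ast$. Such a component would furnish a birth point $(x^\ast, t^\ast)$---with zeros of $G(\cdot, t)$ near $x^\ast$ for $t$ just above $t^\ast$ but none just below---strictly increasing the local zero count across $t = t^\ast$ and contradicting Angenent's monotonicity theorem. I would also rule out isolated-point components by applying the strong maximum principle to the one-signed function $G$ near such a zero, and rule out the pathology of $C$ accumulating at $t = t^\ast > 0$ with $x$-coordinates escaping to infinity by using the uniform spatial decay of $G$ on compact time subintervals.

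The theorem then follows quickly. Fix an edge contour $C$ meeting the line $t = t_2$. Since $C$ is connected and $\pi_t$ is continuous, $\pi_t(C)$ is an interval containing $t_2$; by the preceding exclusions its infimum is $0$ (unattained). Hence $\pi_t(C) \supseteq (0, t_2]$, so in particular $t_1 \in \pi_t(C)$, i.e., $C$ meets $t = t_1$, as required.

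The main obstacle is verifying the no-birth principle on all of $\R$, since Angenent's original monotonicity is usually stated on bounded spatial intervals. I would reduce to the bounded-domain case by exploiting the exponential decay of $f$---for any $[a,b] \subset (0,\infty)$ there exists $R$ such that every zero of $G(\cdot, t)$ with $t \in [a,b]$ lies in $[-R, R]$---or else invoke the whole-line parabolic zero-set results in \cite{lin1991nodal,watanabe2005zero} directly.
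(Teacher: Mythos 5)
Your high-level outline does mirror the paper's proof: a no-birth statement for zeros of $F_{xx}$ (the paper's Lemma \ref{nocreate1} and Corollary \ref{nominimum}, obtained from the parabolic maximum principle, Theorem \ref{maxprinciple}, applied to $F_{xx}$, which indeed solves the heat equation), followed by the exclusion of the one remaining pathology: a contour meeting $t=t_2$ whose $t$-value decreases only asymptotically to some positive level while its $x$-coordinate escapes to infinity. The gap is that you dispose of this pathology by assertion. Uniform spatial decay of $G=F_{xx}$ on compact time subintervals does not rule it out: decay controls the size of $G$, not the location of its sign changes, and a zero curve can run out to $x=+\infty$ while $t$ decreases to a positive limit even though $G\to 0$ there. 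Likewise your proposed reduction to Angenent's bounded-interval setting --- ``for every $[a,b]\subset(0,\infty)$ there is $R$ with all zeros of $G(\cdot,t)$, $t\in[a,b]$, contained in $[-R,R]$'' --- is false for general integrable $f$ of exponential order: for oscillatory data such as $f(x)=e^{-|x|}\cos(\omega x)$, the zeros of $F_{xx}(\cdot,t)$ extend to arbitrarily large $|x|$ for every $t>0$ (the zero count on the whole line is infinite, so zero-number monotonicity is not directly available either). ``Invoke the whole-line results directly'' is left unspecified, and that unspecified step is exactly where the content of the theorem lies.

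What is missing is the paper's quantitative use of the maximum principle in the escape scenario: bounding the region between the escaping contour, the line $t=t_2$, and a vertical line $x=x_1$, the maximum of $|F_{xx}|$ over that region is attained on $x=x_1$; enlarging to $x_2>x_1$ strictly increases this maximum, so one manufactures points $(x_i,t_i^*)$ with $x_i\to\infty$, $t_i^*\in(t',t_2]$, and $|F_{xx}(x_i,t_i^*)|$ nondecreasing, hence bounded away from zero. Only then does the decay hypothesis enter: since $f\in\cP_\gamma'$ and $G^{(2)}_{\sqrt{t_i^*}}(x_i-\cdot)\to 0$ in $\cP_\gamma$, one gets $F_{xx}(x_i,t_i^*)\to 0$, the desired contradiction. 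A secondary issue: your exclusions are phrased for connected components of the zero set, while the theorem concerns individual edge contours; a component may consist of several crossing analytic arcs, so a component-level no-birth principle does not by itself show that a given contour through $t=t_2$ reaches $t=t_1$ (the paper works at the level of contours via Lemma \ref{nocreate1} and Corollary \ref{nominimum}).
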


We also obtain the following unique determination result:

\begin{theorem}
\label{thm:heat}
Let $F$ be a solution to \eqref{heateqn} for some initial condition $f \in L^1(\R)$.  If it is known that the second integral
\begin{equation*}
a(x) = \int_{-\infty}^x \int_{-\infty}^y f(z) \; dz \, dy
\end{equation*}
is a function of exponential order, then $f$ is uniquely determined by the zeros of $F(x,t_j)$ for any sequence
$\{t_j\}_{j=1}^\infty$ of positive real numbers with a positive or infinite limit point.
\end{theorem}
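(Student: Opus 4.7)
The plan is to reduce Theorem~\ref{thm:heat} to the one-dimensional Marr result of Corollary~\ref{cor:OneGaussian}(a), applied not to $f$ itself but to its second antiderivative $a$. Since the heat kernel is Gaussian, $F(x,t) = (f * G_{\sqrt{t}})(x)$ with $G_\sigma(x) = \sigma^{-1} G(x/\sigma)$. Writing $f = a''$ distributionally and integrating by parts twice---the boundary terms vanish because $a$ and $a'(x) = \int_{-\infty}^x f$ decay exponentially at $\pm \infty$---gives
\[
F(x,t) = \bigl(a * (G_\sigma)''\bigr)(x) = \sigma^{-2}\bigl(a * M_\sigma\bigr)(x), \qquad \sigma = \sqrt{t},
\]
where $M = G''$ is the Ricker wavelet. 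In particular, the zero set of $F(\,\cdot\,, t_j)$ in $x$ coincides with that of $a * M_{\sqrt{t_j}}$ for every $j$.

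Next I would verify that $a$ meets the hypotheses of the corollary. By assumption $a$ is of exponential order, so $a \in \cP_\gamma'$; and exponential decay forces $a \in L^1(\R)$. Set $\sigma_j = \sqrt{t_j}$; this sequence inherits a finite positive limit point from $\{t_j\}$ when the latter has one, and an infinite limit point otherwise. In the first case Corollary~\ref{cor:OneGaussian}(a) applied to $a$ determines $a$ up to a nonzero constant from the collection of zero sets $\{a * M_{\sigma_j} = 0\}$. In the second case I would extract a subsequence $\sigma_{j_k} \to \infty$ and invoke Theorem~\ref{generalmarr}, using that the Ricker wavelet satisfies the genericity condition (as shown in Section~\ref{gaussianproof}). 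Either way $a$ is pinned down up to a scalar multiple, and hence so is $f = a''$, as claimed (the zero sets are scale-invariant, so any ``unique determination'' statement is automatically up to a constant).

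The only real obstacle is the integration-by-parts identity relating $F$ to the Ricker wavelet transform of $a$, and this is routine given the exponential decay of $a$ and $a'$. Conceptually the key maneuver is the passage to the antiderivative: it converts a Gaussian convolution---for which the genericity condition fails---into a Ricker wavelet convolution---for which it holds---so that the general uniqueness machinery of the paper can be brought to bear.
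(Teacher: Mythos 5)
Your proposal is correct and follows essentially the same route as the paper: the paper sets $A(x,t)$ equal to the heat evolution of $a$ and observes $A_{xx}=F$, so the zeros of $F(\cdot,t_j)$ are exactly the Gaussian (Ricker) edges of $a$ at scales $\sigma_j=\sqrt{t_j}$, and Corollary \ref{cor:OneGaussian}(a) then determines $a$, hence $f=a''$, up to a constant. Your integration-by-parts identity $F=\sigma^{-2}(a*M_\sigma)$ is just this same observation phrased directly.
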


%%%%%%%%%%%%%%%%%%%%%%%%%%%%%%%%%%%%%%%%%%%%%%%%%%%%%%%%%%%%%%%%%%%%%%%%%%%%%%%%%%%%%%%%%%%%%%%%%%%%%%

\section{Moment Expansion}
\label{moment}

Moment expansions represent functions (generally distributions) as 
series in derivatives 
\[
\delta^{(\alpha)}(\x) \equiv \frac{\partial^{|\alpha|}}{\partial x_1^{\alpha_1} \ldots \partial x_d^{\alpha_d}} \delta(\x),
\]
based on the fact that these derivatives
and the monomials 
\[
\x^\alpha \equiv x_1^{\alpha_1} \ldots x_d^{\alpha_d}
\]
form a biorthogonal system:
\begin{equation}
\label{biorthogonal}
\langle \delta^{(\alpha)}, \x^\beta \rangle = \begin{cases} (-1)^{|\alpha|} \alpha! & \alpha = \beta \\ 0 & \text{otherwise}, \end{cases}
\end{equation} 
with
\[
|\alpha|=\alpha_1 + \ldots + \alpha_d, \qquad \alpha! = \alpha_1! \ldots \alpha_d!.
\]
In principle, the moment expansion of a distribution $f$ is the series
\begin{equation}
\label{momentprinciple}
f(\x) = \sum_{|\alpha| \geq 0}  \frac{(-1)^{|\alpha|}}{\alpha !} \mu_\alpha \;\delta^{(\alpha)}(\x),
\end{equation}
where $\mu_\alpha$ is the $\alpha$th moment of $f$:
\[
\mu_\alpha = \langle f(\x), \x^\alpha \rangle.
\]
We observe that, by the biorthogonality relation \eqref{biorthogonal}, the two sides of \eqref{momentprinciple} agree when applied to any polynomial function of $\x$.  However, the convergence of the moment expansion as a distribution depends on the appropriate choice of distribution spaces.  

In this section we first review the theory of asymptotic moment expansions developed by \cite{Estrada}. We then prove Theorem \ref{thm:momentcontinuity} regarding the local uniform convergence of asymptotic moment expansions applied to convolutions.  

\subsection{Asymptotic moment expansions}

We begin by defining the relevant spaces of test functions and distributions.  For $\gamma > 0$, let $\cP_\gamma = \cP_\gamma(\R^d)$ be the space of smooth functions $\psi$ on $\R^d$ with derivatives asymptotically 
bounded by $e^{\gamma |\x|}$, so that
\[
\lim_{|\x| \rightarrow \infty} e^{-\gamma |\x|}\, \psi^{(\alpha)} (\x) = 0, 
\]
for each $\alpha$. 
The topology on $\cP_\gamma$ is generated by the seminorms
\[
||\psi||_{\gamma,\alpha} = \sup_{\x \in \R^d} \left | e^{-\gamma |\x|} \,\psi^{(\alpha)}(\x) \right |,
\]
varying over multiindices $\alpha$. Define the space $\cP=\cP(\R^d)$ by  
\[
\cP = \bigcap_{\gamma>0} \cP_\gamma,
\]
with topology generated by the seminorms $|| \; ||_{\gamma, \alpha}$ as $\gamma$ and $\alpha$ both vary.  $\cP$ is the space of smooth functions with
slower than exponential growth.  The dual spaces to $\cP_\gamma$ and $\cP$ are denoted $\cP_\gamma'$ and $\cP'$ respectively.  Distributions in $\cP_\gamma'$ decay as $e^{-\gamma |\x|}$ or faster, and while those in $\cP'$ have exponential or faster decay.  Clearly $\cP_\gamma' \subset \cP'$ for each $\gamma > 0.$

The asymptotic moment expansion of a distribution $f \in \cP'$ is \cite[Theorem 4.3.1]{Estrada}
\[
f(\sigma \x) \sim \sum_{|\alpha| \geq 0} \frac{(-1)^{|\alpha|}}{\alpha !} \mu_\alpha \; \sigma^{-|\alpha|-d} \;\delta^{(\alpha)}(\x)
\qquad (\sigma \rightarrow \infty),
\]
where 
\[
\mu_\alpha = \langle f(\x), \x^\alpha \rangle
\]
is the $\alpha$th moment of $f$.  This expansion holds in that for any $\psi \in \cP$ and $N \geq 0$,
\begin{equation}
\label{momentexpansion}
\langle f(\sigma \x), \psi(\x) \rangle = 
\sum_{0 \leq |\alpha| \leq N} \frac{\mu_\alpha}{\alpha !} \; \sigma^{-|\alpha|-d} \; \psi^{(\alpha)} (\mathbf{0})
+ \mathcal{O}(\sigma^{-N-d-1}) \qquad (\sigma \rightarrow \infty).
\end{equation}
The above asymptotic expansion is equivalent to the following equation for all $N \geq 0$:
\[
\lims_{\sigma \rightarrow \infty} \sigma^{N+d} \left|\langle f(\sigma \x), \psi(\x) \rangle - 
\sum_{0 \leq |\alpha| \leq N} \frac{\mu_\alpha}{\alpha !} \; \sigma^{-|\alpha|-d} \; \psi^{(\alpha)} (\mathbf{0}) \right| = 0.
\]
Note that for polynomial $\psi$ of degree $\leq N$, the two sides of \eqref{momentexpansion} coincide (without the error term) according to
the biorthogonality relation \eqref{biorthogonal}.  The moment expansion \eqref{momentexpansion} for general $\psi \in \cP$ is an asymptotic version of this biorthogonality relation. 

\subsection{Local uniform convergence of convolved moment expansions}

Here we prove the continuity result, Theorem \ref{thm:momentcontinuity} from Section \ref{sec:intromoments}, which we state here in a more precise form:
\begin{thm4} For all $f \in \cP',\psi \in \cP$, and $N \geq 0$, the $\sigma$-indexed family of functions 
\begin{equation*}
\w \longmapsto \sigma^{N+d} \left( f*\psi_\sigma(\sigma \w)
- \sum_{0 \leq |\alpha| \leq N} \frac{(-1)^{|\alpha|}}{\alpha !} \, \mu_\alpha \; \sigma^{-|\alpha|-d} \; \psi^{(\alpha)} (\w) \right)
\end{equation*}
converges locally uniformly (in $\w$) to the zero function of $\w$ as $\sigma \rightarrow \infty$.
\end{thm4}

Above, ``converges locally uniformly" is shorthand for ``converges uniformly on compact subsets". 
The proof of Theorem \ref{thm:momentcontinuity} is based on that of the asymptotic moment expansion in \cite{Estrada}.  We begin with the following lemma.

\begin{lemma}
\label{rholemma}
Let $\rho=\rho(\w,\y) \in \cP(\R^{2d})$, and for each fixed $\w \in \R^d$ define $\rho_\w(\y)= \rho(\w,\y)$ (hence 
$\rho_\w \in \cP(\R^d)$.)  Suppose that for some integer $N \geq 0$, $\rho$ satisfies
\[
\rho_\w^{(\alpha)}(0) \equiv \frac{\partial^{|\alpha|} \rho}{\partial \y^\alpha} (\w,\y) \bigg|_{\y=\mathbf{0}} = 0
\]
for all $\w$ and each multiindex $\alpha$ with $|\alpha| \leq N$.
Then for any continuous seminorm $||\quad||$ on $\cP(\R^d)$, the following $\sigma$-indexed family of functions of $\w$,
\[
\w \longmapsto \sigma^{N} ||\rho_{\w}(\placehold/\sigma)||,
\]
converges locally uniformly (in $\w$) to the zero function (of $\w$) as $\sigma \rightarrow \infty$.
\end{lemma}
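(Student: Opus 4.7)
The plan is to reduce to the generating seminorms $\|\cdot\|_{\gamma,\alpha}$ of $\cP(\R^d)$---since every continuous seminorm is dominated by a finite maximum of these---and then split cases according to whether $|\alpha|\le N$ or $|\alpha|>N$. The key identity is
\[
\|\rho_\w(\placehold/\sigma)\|_{\gamma,\alpha} \;=\; \sigma^{-|\alpha|} \sup_{\y \in \R^d} \bigl|e^{-\gamma|\y|}\, \rho_\w^{(\alpha)}(\y/\sigma)\bigr|,
\]
so the quantity to control is $\sigma^{N-|\alpha|}$ times this supremum. I restrict to $\sigma\ge 1$, which suffices since $\sigma\to\infty$.

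In the easier case $|\alpha|>N$, the prefactor $\sigma^{N-|\alpha|}$ already decays, and the supremum is bounded by $\|\rho_\w\|_{\gamma,\alpha}$ because $e^{-\gamma|\y|}\, e^{\gamma|\y|/\sigma}\le 1$ for $\sigma\ge 1$. In the case $|\alpha|\le N$, the hypothesis forces $\rho_\w^{(\alpha)}$ to vanish to order $N-|\alpha|$ at the origin, so Taylor's theorem with integral remainder gives
\[
\rho_\w^{(\alpha)}(\y/\sigma) \;=\; \sigma^{-(N-|\alpha|+1)} \sum_{|\beta|=N-|\alpha|+1} \frac{(N-|\alpha|+1)\,\y^\beta}{\beta!} \int_0^1 (1-t)^{N-|\alpha|}\, \rho_\w^{(\alpha+\beta)}(t\y/\sigma)\,dt.
\]
Combining with the $\sigma^{N-|\alpha|}$ prefactor leaves an overall $\sigma^{-1}$, and the estimate $|\rho_\w^{(\alpha+\beta)}(t\y/\sigma)| \le \|\rho_\w\|_{\gamma/2,\,\alpha+\beta}\, e^{(\gamma/2)|\y|}$ (valid for $\sigma\ge 1$, $t\in[0,1]$ since $|t\y/\sigma|\le|\y|$) turns what remains into $\sigma^{-1}$ times a finite absolute constant $\sup_\y |\y|^{N-|\alpha|+1} e^{-(\gamma/2)|\y|}$ times $\max_{|\beta|=N-|\alpha|+1}\|\rho_\w\|_{\gamma/2,\alpha+\beta}$.

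For local uniformity in $\w$ on a compact $K\subset\R^d$, I would use that $\rho\in\cP(\R^{2d})$ directly yields $\sup_{\w\in K}\|\rho_\w\|_{\gamma',\eta}<\infty$ for every $\gamma'>0$ and every multiindex $\eta$, by absorbing $e^{\gamma'|\w|}$ into $\sup_{\w\in K} e^{\gamma'|\w|}$. Both case bounds thus become locally uniform in $\w$ and of order $\sigma^{-1}$ or faster as $\sigma\to\infty$.

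The main technical nuance, rather than an obstacle, is the splitting of the exponential weight $e^{-\gamma|\y|}$ into a part $e^{-(\gamma/2)|\y|}$ used to dominate the polynomial factor $|\y|^{N-|\alpha|+1}$ from the Taylor remainder, and a part $e^{(\gamma/2)|\y|}$ absorbed into a seminorm bound at weight $\gamma/2$. Apart from this balancing, the argument is a routine Taylor-plus-seminorm estimate.
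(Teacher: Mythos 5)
Your proof is correct, but it takes a genuinely different route from the paper's. The paper argues by contradiction: it proves the slightly stronger statement that $\sigma^{N+1}\,\|\rho_\w(\placehold/\sigma)\|_{\gamma,\mathbf{0}}$ is locally uniformly bounded, by extracting sequences $\w_j\to\w'$ in a compact set and points $\y_j$ realizing the supremum, then splitting according to whether $\y_j/\sigma_j\to\mathbf{0}$ (where the order-$N$ vanishing together with continuity of the $(N+1)$st derivative gives boundedness) or $\y_j/\sigma_j$ stays away from $\mathbf{0}$ (where the two-variable decay $\rho\in\cP(\R^{2d})$ gives boundedness); general $\alpha$ is then reduced to $\alpha=\mathbf{0}$ via the same scaling identity $\|\rho_\w(\placehold/\sigma)\|_{\gamma,\alpha}=\sigma^{-|\alpha|}\|\rho_\w^{(\alpha)}(\placehold/\sigma)\|_{\gamma,\mathbf{0}}$ that you invoke at the outset. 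You instead give a direct quantitative estimate: Taylor's theorem with integral remainder converts the vanishing of the $\y$-derivatives through order $N$ into an explicit factor $\sigma^{-(N-|\alpha|+1)}$, and splitting the weight $e^{-\gamma|\y|}$ into two halves lets one half dominate the polynomial $|\y|^{N-|\alpha|+1}$ and the other half absorb the seminorm bound at weight $\gamma/2$, while the case $|\alpha|>N$ is trivial from the decaying prefactor. The ingredients are identical (the derivative condition near $\y=\mathbf{0}$, and $\rho\in\cP(\R^{2d})$ to get $\sup_{\w\in K}\|\rho_\w\|_{\gamma',\eta}<\infty$), but your version avoids subsequence extraction and compactness, and yields an explicit $O(\sigma^{-1})$ rate---which matches the strengthened $\sigma^{N+1}$ bound the paper actually proves---at the cost of writing out the remainder formula. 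One cosmetic point: a continuous seminorm on $\cP(\R^d)$ is dominated by a constant multiple of a finite maximum of the generating seminorms, not by such a maximum itself; this does not affect the argument, and the paper makes the same final reduction.
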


We use the symbol $\placehold$ to denote function or distribution arguments for the purposes the bracket operation $\langle \, , \, \rangle$ or seminorms.  Here, the notation $\rho_{\w}(\placehold/\sigma)$ represents the function mapping $\y \in \R^d$ to $\rho_{\w}(\y/\sigma)$.

\begin{proof}
We prove the stronger statement that the family of functions
\[
\w \longmapsto \lims_{\sigma \rightarrow \infty} \sigma^{N+1} ||\rho_{\w}(\placehold/\sigma)|| 
\]
is locally uniformly bounded in $\w$, where $\lims$ denotes limit superior.
Consider first the seminorm $|| \quad ||_{\gamma,\mathbf{0}}$ for fixed $\gamma>0$, and suppose the lemma is false.  
Then there must be a compact neighborhood $K \subset \R^d$ and a pair of sequences $\{\w_j \in K\}_{j \geq 0}$, $\{\sigma_j \in \R\}_{j \geq 0}$, with $\sigma_j \to \infty$, such that 
\begin{equation}
\label{sup}
\infty = \lim_{j \rightarrow \infty} \sigma_j^{N+1} \left| \left|\rho_{\w_j}(\placehold/\sigma_j)\right | \right|_{\gamma,\mathbf{0}} =
\lim_{j \rightarrow \infty} \sigma_j^{N+1} \sup_{\y \in \R^d} \left|e^{-\gamma|\y|} \rho_{\w_j}(\y/\sigma_j) \right|.
\end{equation}
By passing to a subsequence if necessary, we may assume $\{\w_j\}$ converges to some $\w' \in K$.

Since
\[
\lim_{|\y| \rightarrow \infty} \left|e^{-\gamma|\y|} \rho_{\w_j}(\y/\sigma_j) \right| = 0
\]
for each $j$, the supremum in \eqref{sup} is realized at some $\y_j \in \R^d$.  Hence
\begin{align*}
\infty & =  \lim_{j \rightarrow \infty} \sigma_j^{N+1} \left| \left|\rho_{\w_j}(\y/\sigma_j) \right| \right|_{\gamma,\mathbf{0}}\\
& = \lim_{j \rightarrow \infty} \sigma_j^{N+1}  \left|e^{-\gamma|\y_j|} \rho_{\w_j}(\y_j/\sigma_j) \right|\\
& = \lim_{j \rightarrow \infty} \left( |\y_j|^{N+1} e^{-\gamma|\y_j|(1-\sigma_j^{-1})} \right)
\left( e^{-\gamma|\y_j/\sigma_j|} |\y_j/\sigma_j|^{-N-1} |\rho_{\w_j}(\y_j/\sigma_j)| \right).
\end{align*}
The expression $|\y_j|^{N+1} e^{-\gamma|\y_j|(1-\sigma_j^{-1})}$ is bounded in $j$ since $\sigma_j \rightarrow \infty$, so we must have
\begin{equation}
\label{unbounded}
\lim_{j \rightarrow \infty} e^{-\gamma|\y_j/\sigma_j|} |\y_j/\sigma_j|^{-N-1} |\rho_{\w_j}(\y_j/\sigma_j)| = \infty.
\end{equation}

By passing to a subsequence if necessary,
we may assume that the sequence $\{\y_j/\sigma_j\}$ either approaches the origin as a limit or is bounded away from the origin.  In the first case,  
$\lim_{j \to \infty} \y_j/\sigma_j = \mathbf{0}$, the quantity
\[
|\y_j/\sigma_j|^{-N-1} \left|\rho_{\w'}(\y_j/\sigma_j) \right|
\]
is bounded by the derivative condition on $\rho$, and so the quantity
\[
|\y_j/\sigma_j|^{-N-1} \left|\rho_{\w_j}(\y_j/\sigma_j) \right|
\]
appearing in \eqref{unbounded} is bounded by continuity of the $(N+1)$st derivative of $\rho$.  Therefore 
\[
e^{-\gamma|\y_j/\sigma_j|} |\y_j/\sigma_j|^{-N-1} |\rho_{\w_j}(\y_j/\sigma_j)|
\]
is bounded, contradicting \eqref{unbounded}.  
In the second case, $\liminf_{j \to \infty} |\y_j/\sigma_j|>0$, we note that $|\w_j|$ is bounded since $K$ is compact.  Therefore, the quantity
\[
e^{-\gamma|\y_j/\sigma_j|}|\rho_{\w_j}(\y_j/\sigma_j)| = e^{-\gamma|\y_j/\sigma_j|}|\rho(\w_j, \y_j/\sigma_j)|
\]
appearing in \eqref{unbounded} is less than or equal to
\begin{equation}
\label{tobebounded}
B e^{-\gamma\sqrt{|\w_j|^2+|\y_j/\sigma_j|^2}}|\rho(\w_j, \y_j/\sigma_j)|,
\end{equation}
for some $B>0$.  Quantity \eqref{tobebounded} is bounded in $j$ since 
$\rho(\placehold,\placehold) \in \cP(\R^{2d})$.  Combining this with the boundedness of $|\y_j/\sigma_j|^{-N-1}$ again yields a contradiction of
\eqref{unbounded}.  The lemma is therefore true for the seminorm $|| \quad ||_{\gamma,\mathbf{0}}.$

For the seminorm $||\quad||_{\gamma,\alpha}$ with $|\alpha| >0$ we have
\[
||\rho_\w(\placehold /\sigma)||_{\gamma,\alpha} 
= \sigma^{-|\alpha|} ||\rho_\w^{(\alpha)}(\placehold /\sigma)||_{\gamma,\mathbf{0}},
\]
whereupon we may apply the above argument to $\rho_\w^{(\alpha)}$ in place of $\rho_\w$, yielding the desired result.  Since the family of seminorms 
$|| \quad ||_{\gamma, \alpha}$ generates the topology on $\cP$, the result is true for any continuous seminorm.
\end{proof}

\begin{proof}[Proof of Theorem \ref{thm:momentcontinuity}]
Let 
\[
P_N(\w,\y) = \sum_{0 \leq |\alpha| \leq N} \frac{(-1)^{|\alpha|}}{\alpha !} \y^{\alpha} \, \psi^{(\alpha)}(\w)
\]
be the Taylor expansion of $\psi(\w-\y)$ about $\y = \mathbf{0}$ to order $N$, and define the remainder function $\rho_{N,\w}(\y)$ by 
\[
\rho_{N,\w}(\y) = \psi(\w-\y) - P_N(\w,\y).
\]
Then
\begin{align*}
\nonumber
f*\psi_\sigma(\sigma \w)  & = \sigma^{-d} \big \langle f(\placehold), \psi ((\sigma \w-\placehold)/\sigma ) \big \rangle \\
 & = \sigma^{-d} \big \langle f(\placehold), \psi ( \w-\placehold/\sigma ) \big \rangle \\
& = \sigma^{-d} \big \langle f(\placehold), P_N(\w,\placehold/\sigma)\big \rangle +
\sigma^{-d} \big \langle f(\placehold), \rho_{N,\w}(\placehold/\sigma) \big \rangle\\
& = \sum_{0\leq |\alpha| \leq N}  
\frac{(-1)^{|\alpha|}}{\alpha !} \, \mu_\alpha \; \sigma^{-|\alpha|-d} \; \psi^{(\alpha)} (\w) + 
\sigma^{-d} \big \langle f(\placehold), \rho_{N,\w}(\placehold/\sigma) \big \rangle.
\end{align*}
Rearranging, we obtain
\begin{multline*}
\sigma^{N+d} \left(\langle f(\sigma \placehold), \psi(\w + \placehold) \rangle 
- \sum_{0 \leq |\alpha| \leq N} \frac{\mu_\alpha}{\alpha !} \; \sigma^{-|\alpha|-d} \; \psi^{(\alpha)} (\w) \right)\\
 = \sigma^N \langle f(\placehold), \rho_{N,\w}(\placehold/\sigma) \rangle.
\end{multline*}
To finish, we note that $||\rho_{N,\w}|| = |\langle f(\placehold), \rho_{N,\w}(\placehold) \rangle|$ is a continuous seminorm on $\rho_{N,\w} \in \cP(\R^d)$ for any $f \in \cP'$, and $\rho_N(\w,\y) \equiv \rho_{N,\w} (\y) $ satisfies 
the conditions of Lemma \ref{rholemma}.  Therefore, the family of functions
\[
\w \longmapsto \sigma^N \langle f(\placehold), \rho_{N,\w}(\placehold/\sigma) \rangle
\]
converges locally uniformly in $\w$ to the zero function as $\sigma \rightarrow \infty$, proving the theorem.
\end{proof}

%%%%%%%%%%%%%%%%%%%%%%%%%%%%%%%%%%%%%%%%%%%%%%%%%%%%%%%%%%%%%%%%%%%%%%%%%%%%%%%%%%%%%%%%%%%%%%%%%%%%%%%%%%%%%%%%%%%

\section{Proof of Unique Determination}
\subsection{General wavelets}
\label{generalproof}

Here we prove our main result, Theorem \ref{generalmarr}, regarding unique determination of a function from the nodes of its wavelet transform at a discrete set of scales.

Let $\psi \in \cP$ be a wavelet, and let $f \in \cP_\gamma' \cap L^1(\R^d)$, for some $\gamma>0$, be a function to be determined.   We define the $\psi$-zeros of $f$ at scale $\sigma > 0$ to be the zeros of $f*\psi_\sigma(\x)$, with $\psi_\sigma(\x) = \sigma^{-d} \psi(\x/ \sigma)$. We recall the genericity condition from the Introduction:

\begin{generic}
The regular zero set of any derivative of fixed order $n$ is not contained in the zero set of any other derivative of fixed order $m$, for any $n,m \geq 0$.
\end{generic}

Above, the \emph{regular} (or \emph{transverse}) zeros of $\psi$ are those those around which the function takes both positive and negative values in any open neighborhood.  A \emph{derivative of fixed order $n$} (or an \emph{order-$n$ derivative}) of $\psi$ is a function of the form $\sum_{|\alpha|=n} C_\alpha \psi^{(\alpha)}$ for some $n \geq 0$, where the $C_\alpha$ are constants not all equal to zero, defined up to multiplication by a nonzero scalar.

Our main result can now be stated as follows: 

\begin{thm1}
Let $f \in \cP_\gamma' \cap L^1(\R^d)$ (for any $\gamma >0$) and $\psi \in \cP$ satisfy the genericity condition.  Then $f$ is uniquely determined (up to a constant multiple) by its $\psi$-edges at any sequence
of positive scales $\{\sigma_j\}_{j=1}^\infty$ tending to infinity.
\end{thm1}

\begin{proof} For convenience we introduce $\w = \x/\sigma$.  Moment expansion (Theorem \ref{thm:momentcontinuity}) gives
\begin{equation}
\label{prezceqn}
f*\psi_\sigma(\x) 
 \sim \sum_{|\alpha| \geq 0} \frac{(-1)^{|\alpha|}}{\alpha !} \; \mu_\alpha \; \sigma^{-|\alpha|-d} \; \psi^{(\alpha)} (\w) 
\qquad (\sigma \rightarrow \infty).
\end{equation}
Also for convenience, we introduce the function $Z(\sigma, \w) = \sigma^{n_0 +d} \;  (f*\psi_\sigma) (\sigma \w)$, where $n_0$ is 
the order of the lowest-order nonzero moment of $f$.  $Z$ admits the moment expansion
\begin{equation}
\label{zceqn}
Z(\sigma, \w) \sim \sum_{|\alpha| \geq n_0} \frac{(-1)^{|\alpha|}}{\alpha !} \; \mu_\alpha \; \sigma^{n_0-|\alpha|} \; 
 \psi^{(\alpha)} (\w) \qquad (\sigma \rightarrow \infty).
\end{equation}
By locally uniform convergence of the moment expansion (Theorem \ref{thm:momentcontinuity}, in the case $N=n_0$), as $\sigma \rightarrow \infty$, $Z(\sigma,\w)$ converges locally uniformly in $\w$ to
\begin{equation}
\label{asedge}
z(\w) = (-1)^{n_0} \sum_{|\alpha| = n_0} \frac{\mu_\alpha}{\alpha !} \; \psi^{(\alpha)} (\w).
\end{equation}

The $\psi$-zeros at scale $\sigma$ correspond to the zeros, in $\w$, of $ Z(\sigma, \w)$.  By assumption, we are given the zero sets $E_j = \{\w :Z(\sigma_j, \w)=0\} \subset \R^d$ at scale $\sigma=\sigma_j$ for each $j \geq 0$.  We call the 
limiting set $E$ of $\{E_j\}$ as $j \rightarrow \infty$ (i.e. the set of all limits of sequences  $\{\w_j \in E_j\}_{j \geq 0}$) the \emph{asymptotic zero set}.  

$E$ contains all regular zeros of $z$ since regular zeros persist under small locally uniform perturbations. 
So if $\w'$ is a regular zero of $z(\w)$ we may, from knowledge of $\{E_j\}_{j \geq 0}$, choose a sequence $\{\w_j \in E_j\}_{j \geq 0}$ such that
$\lim_{j\rightarrow \infty} \w_j =\w'.$
By locally uniform convergence (Theorem \ref{thm:momentcontinuity}), we may substitute $\sigma = \sigma_j$ and $\w = \w_j$ into \eqref{zceqn}, obtaining
\[
0 = Z(\sigma_j, \w_j) \sim \sum_{|\alpha| \geq n_0} \frac{(-1)^{|\alpha|}}{\alpha !} \; \mu_\alpha \; \sigma_j^{n_0-|\alpha|} 
\; \psi^{(\alpha)} (\w_j).
\]
This expansion holds in the sense that for each $k \geq 0$, the partial sum of the right-hand side with $|\alpha|$ up to $n_0+k$ vanishes to order $\sigma_j^{-k}$ as
$j \rightarrow \infty$:
\begin{equation}
\label{prerecur}
\lim_{j \rightarrow \infty} \sigma_j^k \sum_{n_0 \leq |\alpha| \leq n_0+k } \frac{(-1)^{|\alpha|}}{\alpha !} \; \mu_\alpha \; 
\sigma_j^{n_0-|\alpha|} \; \psi^{(\alpha)} (\w_j) =0,
\end{equation}
for all $k \geq 0$.  We separate the left-hand side of \eqref{prerecur} into terms involving moments of order $n_0+k$ and those involving lower-order moments:
\begin{multline}
\label{recursion} 
\sum_{|\alpha|=n_0 + k}  \frac{(-1)^{|\alpha|}}{\alpha !} \; \mu_\alpha \psi^{(\alpha)} (\w')\\
+ \lim_{j \rightarrow \infty} \sum_{n_0 \leq |\alpha| < n_0 + k} \frac{(-1)^{|\alpha|}}{\alpha !} \; \mu_\alpha 
\; \sigma_j^{n_0+k-|\alpha|} \; \psi^{(\alpha)} (\w_j) =0.
\end{multline}
 
The two terms of Equation \eqref{recursion} form a linear recursion relation for the moments $\mu_\alpha$ of order $|\alpha|=n_0+k$ in terms of lower-order moments. We now show by induction on $k$ that Equation \eqref{recursion} recursively determines all moments of $f$ up to a constant multiple. 

As a basis step we observe that, for $k=0$, the second term on the left-hand size of Equation \eqref{recursion} vanishes, while the first term is equal to $z(\w')$.  Since $Z(\sigma,\w)$ converges locally uniformly in $\w$ to $z(\w)$, the asymptotic edge $E$ contains the regular zeros set of $z$ and is contained in the zero set of $z$. Furthermore, since $z$ is an order-$n_0$ derivative of $\psi$, the genericity condition ensures that $E$ cannot contain the regular zero set of any other fixed-order derivative of $\psi$ (if so, this regular zero set would also be contained in the zero set of $z$, violating the genericity condition).  Thus $z(\w)$ is the unique fixed-order derivative of $\psi$ whose regular zeros are contained in $E$. Since $E$ is uniquely determined by the given zero sets $\{E_j\}$, it follows that $n_0$ and all moments $\mu_\alpha$ with $|\alpha|=n_0$, up to a common multiple, are uniquely determined by these zero sets.

Now assume for induction that for some $k>0$, all moments $\mu_\alpha$ with $|\alpha|<n_0+k$ are known.  Then the first term on the left-hand side of Equation \eqref{recursion} can be evaluated at any $\w' \in E$ by choosing a corresponding sequence $\{\w_j \in E_j\}$ with $\w_j \to \w'$ and evaluating the second term.  The genericity condition ensures that the moments $\mu_\alpha$ with $|\alpha|=n_0+k$ are uniquely determined by the values of the first term as $\w'$ ranges over the regular zeros of $z$, since the difference between any two distinct solutions would be an order-$(n_0 + k)$ derivative of $\psi$ that is identically zero on the regular zero set of $z$, an order-$n_0$ derivative of $\psi$.  Thus the moments of order $n_0+k$ are uniquely determined by the lower-order moments together with the given zero sets $\{E_j\}$.  If the lower-order moments are known only up to a common multiple, then since Equation \eqref{recursion} is linear in the moments, those of order $n_0+k$ are determined up to this same common multiple.  This completes the induction, showing that all moments of $f$ are determined up to a constant multiple by the zero sets $\{E_j\}$.

To determine $f$ from its moments we can use the Fourier transform
\[
\hat{f}(\omega) = \int_{\R^d} f(\x) e^{-i \omega \cdot \x} \; d \x.
\]
We claim that $\hat{f}(\omega)$ is well-defined and analytic for all $\omega \in \C^d$ with 
$|\Im \omega|<\gamma$.  This result is well-known as a version of the Payley-Weiner theorem for $f \in \cP_\gamma' \cap L^2$; we provide the argument for $f \in \cP_\gamma' \cap L^1$.  

Fix such an $\omega$.
The Fourier transform $\hat{f}$ is well-defined at $\omega$ since $f \in \cP_\gamma' \cap L^1(\R^d)$.
Furthermore, the (complex) partial derivative of $\hat{f}$ in the $j$th coordinate at $\omega$ is given by
\begin{equation}
\label{fourierderivative}
\frac{\partial \hat{f}}{\partial \omega_j} (\omega) = \lim_{\substack{\epsilon \rightarrow 0\\ \epsilon \in \C}} 
\int_{\R^d} f(\x) \frac{1}{\epsilon}
\left(e^{-i (\omega \cdot \x+\epsilon x_j)}- e^{-i \omega \cdot \x}\right) \; d \x.
\end{equation}
Fix $\lambda \in \R$ satisfying $\Im \omega < \lambda< \gamma$.  
For sufficiently small $\epsilon$, the integrand in \eqref{fourierderivative} is absolutely bounded over all $\x \in \R^d$ by
\[
|\x f(\x)| e^{ \lambda |\x|},
\]
which is integrable since $f \in \cP_\gamma' \cap L^1(\R^d)$.  By dominated convergence, the limit and integral in \eqref{fourierderivative} can be interchanged, yielding
\[
\frac{\partial \hat{f}}{\partial \omega_j} (\omega) = \int_{\R^d} (-ix_j) f(\x) e^{-i \omega \cdot \x} \; d \x.
\]
This shows that all complex first partials of $\hat{f}(\omega)$ exist; thus $\hat{f}$ is analytic at $\omega$.  

Using dominated convergence to iteratively evaluate derivatives of $\hat{f}(\omega)$ as in \eqref{fourierderivative}, we obtain the Taylor expansion
\[
\hat{f}(\omega) = \sum_{|\alpha| \geq 0|}  \mu_\alpha \frac{(-i\omega)^\alpha}{\alpha !}.
\]
By analytic continuation, the moments $\{ \mu_\alpha \}_{|\alpha \geq 0}$ uniquely determine $\hat{f}$ on $\R^d$. Since the Fourier transform is one-to-one on $L^1(\R^d)$, $f$ is uniquely determined
by its moments.  This completes the proof.
\end{proof}

This theorem can be described as the strongest of its kind in two senses.  First (Section \ref{necessity}), the theorem is false if the exponential decay required by the condition $f\in \cP_\gamma$ is relaxed to algebraic decay.  

Second, if the above genericity condition \ref{generic} is relaxed even mildly, the theorem becomes false.  Indeed, if we relax the condition to require that the \emph{regular} (instead of all) zeroes of any fixed order derivative $\psi^{(n)}$ never contain the regular zeroes of any other fixed order derivative $\psi^{(n)}$, the theorem is false.  

The simplest example of this involves on $\R^1$ a wavelet based on the Gaussian, for which a one term and a two term Dirac delta initial condition have the same zeroes.

This wavelet $\psi(x)$ is based on a triplet of reals $(a,b,x^*)$ solving the system of equations
\begin{subequations}
\label{eq:countersystem}
\begin{align}
\label{eq:countersystema}
-x^*e^{-(x^*)^2/2}+ax^*+b &=0\\
\label{eq:countersystemb}
\left({(x^*)}^2-1 \right) e^{-(x^*)^2/2}+a &=0\\
\label{eq:countersystemc}
\sqrt{3}e^{-3/2}-\sqrt{3}a+b &=0,
\end{align}
\end{subequations}
where $x^* \neq -\sqrt{3}$.  A numerical solution exists with $x^* \approx 0.71$, $a \approx 0.38$, and $b \approx 0.28$.  Note that $x^*$ is a transcendental (non-algebraic) number, as we will show.

Now let
\[
\psi(x) = -xe^{-x^2/2}+ax+b.
\]
$\psi(x)$ has a non-regular zero at $x=x^*$ and a regular zero at $x=-\sqrt{3}$. Its derivative $\psi'(x)$ has zeros at $x=\pm x^*$ Its second derivative $\psi''(x)$ has regular zeros at $x=0$ and $x= \pm\sqrt{3}$ and no non-regular zeros (see Figure 1).  All derivatives of order $n \geq 3$ have (algebraic) zeros given by the roots of the Hermite polynomial $H_{n+1}(x)$. 

\begin{figure}
\begin{center}
\includegraphics[scale=0.8]{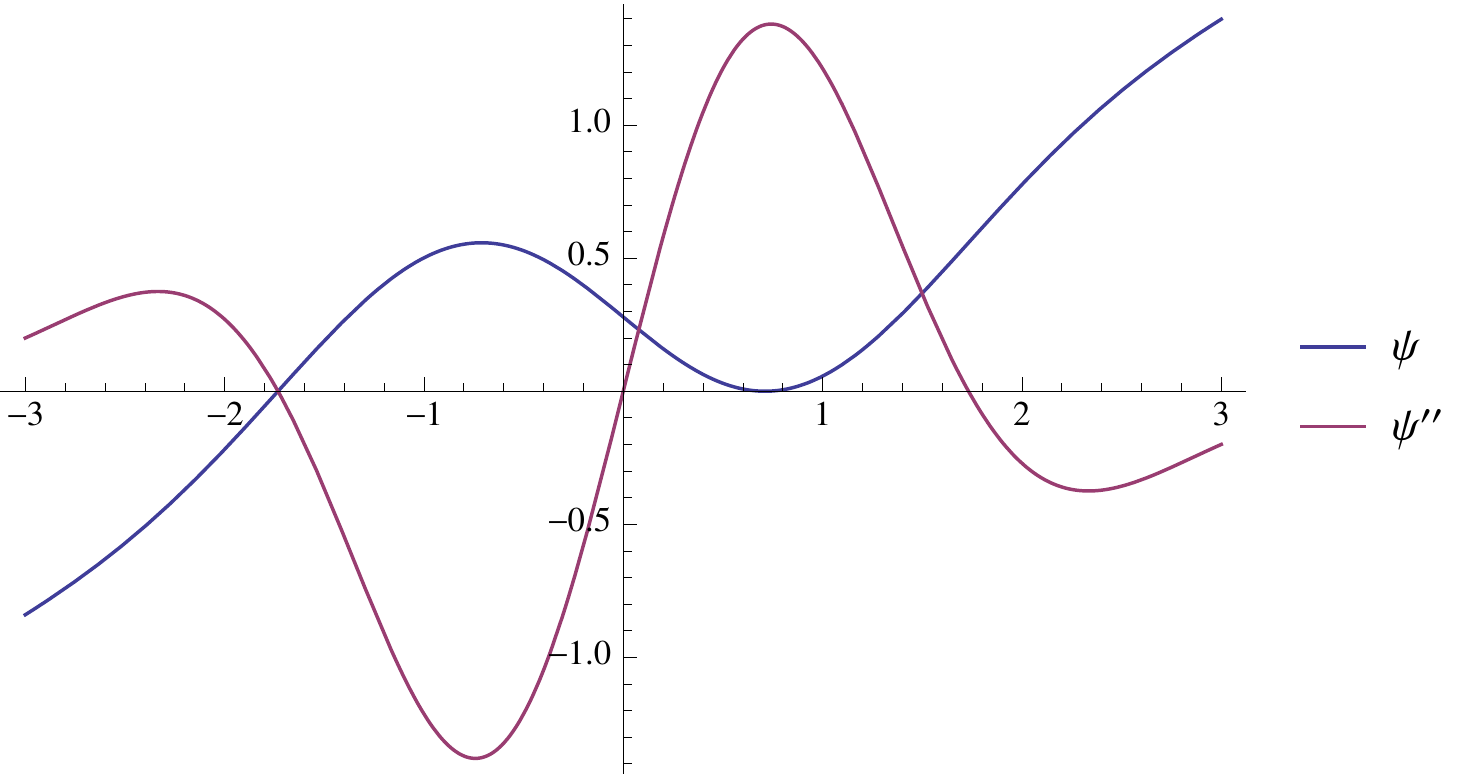}
\caption{Graphs of $\psi(x)$ and $\psi''(x)$. Note $\psi(x)$ has a non-regular zero at $x=x^*$ and a regular zero at $x=-\sqrt{3}$, while $\psi''(x)$ has regular zeros at $x=0$ and $x= \pm\sqrt{3}$ and no non-regular zeros}
\end{center}
\end{figure}

The wavelet $\psi$ satisfies the weakened genericity condition.  Indeed, the zeros of $\psi$ are clearly not contained in those of $\psi'$ or $\psi''$. Further, the zeroes of $\psi^{(n)}$ for $n>2$ are zeroes of Hermite polynomials, which are algebraic, so they cannot contain $x^*$.  Similarly, $\psi'$ has zeroes at $\pm x^*$, which are not both contained in the zeroes of $\psi$, nor in the (algebraic) zeroes of $\psi^{(n)}$ for $n \geq 2$.  The zeroes of $\psi^{(n)}$, $n\geq 2$, are algebraic and so cannot be contained in those of $\psi$ or $\psi'$, and are not contained in the zeroes of any other $\psi^{(m)}$ ($m\geq 2$), since the zeroes of different Hermites are never contained in each other (see Section \ref{gaussianproof}).

Note however that $\psi$ does \emph{not} satisfy the original genericity condition, since its only regular zero (at $x=-\sqrt{3}$) is contained in the zero set of $\psi''$.  

For $c>0$ sufficiently small and $\sigma>0$ sufficiently large, the functions $\psi(w)$ and $\psi(w) + c\sigma^{-2}\psi''(w)$ both have zeros only at $x=-\sqrt{3}$.  Thus the initial distributions $\delta^{(0)}$ and $\delta^{(0)} + c\delta^{(2)}$ cannot be distinguished by their zeros when the scaling $\sigma$ is sufficiently large.  This shows that the weakened genericity condition is insufficient for Marr's conjecture to hold. 

It remains to show that $x^*$ is transcendental (i.e., not an algebraic number).  To this end, solving \eqref{eq:countersystemb} for $a$ we have
\[
a= \left(1-(x^*)^2 \right) e^{(-x^*)^2/2}.  
\]
Now substituting for $a$ in \eqref{eq:countersystema} and solving for $b$:
\[
b = (x^*)^3 e^{(-x^*)^2/2}.  
\]
Now substituting in \eqref{eq:countersystemc} and rearranging,
\[
\sqrt{3}e^{-3/2+(x^*)^2/2}=\sqrt{3} \left(1-(x^*)^2 \right)-(x^*)^3.
\]
If $x^*$ were algebraic then the left side would be transcendental (since the exponentials of non-zero algebraics are transcendental by the Hermite-Lindemann theorem) while the right side would be algebraic, which would give a contradiction.  Therefore $x^*$ is transcendental.

%%%%%%%%%%%%%%%%%%%%%%%%%%%%%%%%%%%%%%%%%%%%%%%%%%%%%%%%%%%%%%%%%%%%%%%%%%%%%%%%%%%%%

\subsection{Ricker wavelets and the Marr conjecture}
\label{gaussianproof}

We now specialize Theorem \ref{generalmarr} to the Ricker (Mexican hat) wavelet $M(\x)=\Delta G(\x)$, which is clearly in $\cP$. To apply the theorem we need to verify Conditions 1 and 2.

In one dimension, the Ricker wavelet has derivatives
\begin{equation}
\label{Hermite}
M^{(n)}(x) = (-1)^n H_{n+2}(x) G(x),
\end{equation}
where 
\begin{equation}
\label{explicitHermite}
H_n(x) = \sum_{k=0}^{\lfloor n/2 \rfloor} (-1)^k \frac{n!}{k!(n-2k)!2^k} x^{n-2k}
\end{equation}
is the $n$th Hermite polynomial.  We invoke a theorem of Schur \cite{schur} that $H_{2m}(x)$ 
and $H_{2m+1}(x)/x$ are irreducible (cannot be factored) over the rationals for all $m \geq 0$.   Two distinct irreducible monic polynomials over the rationals cannot have a 
common real root $x_0$, or else they would both be divisible by the minimal polynomial of $x_0$ (i.e. the unique rational monic polynomial of minimal degree that has a root 
at $x_0$; see \cite[Theorem V.1.6]{Hungerford}).  Thus any two distinct Hermite polynomials have at most the root $x=0$ in common.  Furthermore, the relation
\[
H_n'(x) = nH_{n-1}(x),
\]
together with the above irreducibility result, implies that Hermite polynomials have no multiple roots, i.e. all zeros are regular. The genericity condition on $M(x)$ follows, proving Corollary \ref{cor:OneGaussian}(a) in the case $\sigma_i \to\infty$:

\begin{cor2a} \emph{(Infinite limit case)}
Any $f \in \cP_\gamma' \cap L^1(\R)$ is uniquely determined (up to a constant multiple) by its Gaussian edges at any sequence of scales tending to infinity.
\end{cor2a}

In higher dimensions, the genericity condition on $M$ reduces to polynomial relations.  Partial derivatives of $M$ are described by the Laplace-Hermite polynomials $L_\alpha(\x)$, defined in equation \eqref{eq:LHrelation}, which have the explicit form
\[
L_\alpha(\x) = \sum_{i=1}^d \left ( H_{\alpha_i+2}(x_i) \prod_{\substack{1 \leq j \leq d\\j \neq i}} H_{\alpha_j}(x_j)
\right ).
\]
The genericity condition reduces to statements about zeroes of these polynomials, as stated in Corollary \ref{cor:dGaussian} above.  We have numerically verified this condition for the case $d=2$, $n=0$, $m \leq 15$.

%%%%%%%%%%%%%%%%%%%%%%%%%%%%%%%%%%%%%%%%%%%%%%%%%%%%%%%%%%%%%%%%%%%%%%%%%%%%%%%%%%%%%%%%%%%%%%%%%%%%%%%%%%%%%%%%

\section{Geometry of Gaussian Edge Contours}
\label{edgesec}

Having proven the Marr conjecture in one dimension (Corollary \ref{cor:OneGaussian}(a)), in the remainder of this work we ask whether this result can be extended to other sequences of scales and to functions that decay less rapidly than those in $\cP'_\gamma$.  We therefore restrict our focus to one-dimensional Gaussian edges---that is, zeros of $f*M_\sigma$, or equivalently, of $\Delta (f * G_\sigma)$---for $f \in L^1(\R)$.  Our results are summarized in Corollary \ref{cor:OneGaussian}(b,c) above.  

To start, we give a characterization of the geometry of one-dimensional Gaussian edges, which we will later use in proving unique determination from sequences of bounded-scale edges.  Since these edges are nodes of a heat equation solution, we will represent scale using the variable $t=\sigma^2$ rather than $\sigma$.  

Given $f \in \cP_\gamma' \cap L^1(\R)$, define
\[
F(x,t) = f * G_{\sqrt{t}} (x).
\]
$F$ is jointly analytic in both variables on the upper half-plane $H_+ = \R \times (0, \infty)$ (e.g. \cite[Theorem 10.3.1]{Cannon}). 
Both $F$ and $F_{xx}=\frac{\partial^2 F}{\partial x^2}$ satisfy the heat equation \eqref{heateqn}, and so are  subject to the following maximum principle  (e.g. \cite[Theorem 15.3.1]{Cannon}):

\begin{theorem}
\label{maxprinciple}
For $t_2>t_1 \geq 0$, let $s_1,s_2:[t_1,t_2]\rightarrow \R$ be continuous functions with $s_1(t)<s_2(t)$ for all $t \in (t_1, t_2]$.  Let $D$ be the parabolic
interior
\[
D=\{(x,t): t \in (t_1, t_2], s_1(t)<x<s_2(t) \},
\]
Let $u(x,t)$ satisfy the heat equation $u_{xx} = \frac{1}{2}u_t$ on $\bar{D}$, the closure of $D$.  
Then if the maximum (or minimum) value of $u$ over $\bar{D}$ is achieved on $D$, $u$ is constant on $\bar{D}$.
\end{theorem}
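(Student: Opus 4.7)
The plan is to establish the theorem as a weak-plus-strong maximum principle for $u_t = 2u_{xx}$ on $\bar{D}$. For the weak maximum principle, I would introduce the perturbed function $v_\epsilon(x,t) = u(x,t) - \epsilon t$ for $\epsilon > 0$; using $u_{xx} = \tfrac{1}{2} u_t$, a direct computation gives
\[
v_{\epsilon,xx} - \tfrac{1}{2} v_{\epsilon,t} = \tfrac{\epsilon}{2} > 0.
\]
If $v_\epsilon$ achieved its maximum over $\bar{D}$ at a point $(x_0,t_0)$ with $t_0 \in (t_1,t_2]$ and $s_1(t_0) < x_0 < s_2(t_0)$, then $v_{\epsilon,xx}(x_0,t_0) \leq 0$ together with $v_{\epsilon,t}(x_0,t_0) \geq 0$ (equality when $t_0 < t_2$, one-sided at the top $t_0 = t_2$), contradicting the strict inequality above. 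Hence $v_\epsilon$ must attain its maximum on the parabolic boundary $\partial_p D = \{t=t_1\} \cup \{x=s_1(t)\} \cup \{x=s_2(t)\}$, and letting $\epsilon \to 0$ transfers the conclusion to $u$.

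The strong maximum principle---that attainment of the maximum inside $D$ forces $u$ to be constant on $\bar{D}$---is the substantial step, and I would run the classical Hopf/Nirenberg barrier argument. Suppose $u$ attains its maximum $M$ at $(x_0,t_0) \in D$ while $u \not\equiv M$ on $\bar{D}$. Pick $(y,s) \in \bar{D}$ with $u(y,s) < M$, enlarge the parabolic ball $B_r(y,s) \subset \bar{D} \cap \{u < M\}$ until its closure first touches the level set $\{u = M\}$ at some point $(x_1,t_1^*)$, and compare $u$ on a thin annular shell enclosing $\partial B_r(y,s)$ against the barrier
\[
h(x,t) = e^{-\alpha\left((x-y)^2 + (t-s)^2\right)} - e^{-\alpha r^2}.
\]
For $\alpha$ sufficiently large, $h$ is a strict subsolution of the heat equation on the annulus and vanishes on $\partial B_r(y,s)$. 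Applying the weak principle of the previous paragraph to $u + \delta h$ for small $\delta > 0$ forces $u + \delta h \leq M$ on the parabolic boundary of the annulus, yet the inward spatial derivative of $h$ at $(x_1, t_1^*)$ is strictly positive, contradicting the interior-extremum character of $(x_1, t_1^*)$ for $u + \delta h$. This yields $u \equiv M$ throughout the parabolic past of $(x_0, t_0)$ in $\bar{D}$; constancy then extends to the remainder of $\bar{D}$ via spatial real-analyticity of heat equation solutions together with the fact that $u \equiv M$ on a full open spatial slice within $\bar{D}$. The minimum case follows by applying the result to $-u$.

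The principal obstacle is the barrier construction in the strong principle: the time-asymmetry of the heat equation forces both the annular region and the parameter $\alpha$ to be chosen with orientation in mind, and verifying simultaneously that $h$ is a strict subsolution and that $u + \delta h$ retains a local extremum at $(x_1, t_1^*)$ requires tracking several sign conditions at once. The weak principle reduces to a short calculus argument, and the closing analytic-propagation step is a routine consequence of the regularity of heat equation solutions already noted in the paper.
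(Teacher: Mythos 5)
The paper itself offers no proof of Theorem \ref{maxprinciple}: it is quoted from the literature (\cite[Theorem 15.3.1]{Cannon}), so what you are really attempting is a self-contained textbook argument. Your weak-maximum-principle step (the $u-\epsilon t$ perturbation) is correct and standard. The genuine gap is in the strong-principle step. Your claim that the barrier is a strict subsolution on the annulus for large $\alpha$ is false near the vertical axis of the ball: a direct computation gives
\[
h_{xx}-\tfrac12 h_t \;=\; \alpha\, e^{-\alpha\left((x-y)^2+(t-s)^2\right)}\bigl(4\alpha (x-y)^2-2+(t-s)\bigr),
\]
which is negative whenever $x$ is close to $y$ and $|t-s|<2$, no matter how large $\alpha$ is; moreover $h_x=0$ at the topmost and bottommost points of the ball, so the normal-derivative contradiction also evaporates there. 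Since ``enlarge the ball until it first touches $\{u=M\}$'' gives you no control over where the contact occurs, the argument can break precisely when the touching point is a temporal pole of the ball. This is exactly why the classical Nirenberg/Protter--Weinberger proof is organized into several lemmas: a contact lemma in which the touching point is arranged away from the poles (by re-centering and shrinking the ball), propagation of the maximum along fixed-time horizontal segments, and a separate downward-in-time propagation lemma using a different (paraboloid-type) barrier. Your sketch compresses all of this into one step and, as written, does not close.

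The closing step is also unsound: spatial real-analyticity cannot push constancy forward in time (solutions are analytic in $x$, not in $t$, and the later behavior is governed by the later lateral boundary values, about which you know nothing). In fact forward constancy is false under the stated hypotheses: take $u\equiv M$ for $t\le t_0$ and, for $t>t_0$, the solution of the initial--boundary value problem with initial value $M$ and lateral boundary values $M-e^{-1/(t-t_0)}$; the glued function is a smooth solution on the closed rectangle whose maximum is attained at points of $D$ with $t<t_2$, yet it is not constant on $\bar{D}$. What the strong maximum principle genuinely yields is $u\equiv M$ on $\{(x,t)\in\bar{D}:\ t\le t_0\}$, where $t_0$ is the time of the interior extremum; the blanket conclusion ``constant on $\bar{D}$'' is correct only when the extremum occurs at the top time $t=t_2$, which is the only case the paper actually uses (Lemma \ref{nocreate1}). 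So the realistic target for a self-contained proof is the $t\le t_0$ version, proved via the full Nirenberg argument, together with the observation that this suffices for the application.
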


An immediate consequence of the maximum principle is that $F_{xx}$ has no isolated zeros, since such a zero would be a local extremum.  Furthermore, since $F_{xx}$ is analytic,
the resolution of analytic singularities in two real dimensions 
\cite{Hironaka2, Hironaka, Bierstone} (or Puiseux series expansion, e.g.~\cite{Bierstone2} or Theorem 4.2.11 of \cite{Krantz}) implies that for each zero $(x_0, t_0) \in H_+$ of $F_{xx}$ 
there must be a neighborhood $W \subset H_+$ containing $(x_0, t_0)$ and a collection of injective real-analytic mappings
\begin{align}
\nonumber
(\chi_1, \tau_1) & : (-\epsilon_1, \epsilon_1) \rightarrow W,\\
\nonumber
& \vdots\\
\label{parameterization}
(\chi_k, \tau_k) & : (-\epsilon_k, \epsilon_k) \rightarrow W,
\end{align}
the images of which intersect only at $(x_0, t_0) = (\chi_1(0), \tau_1(0)) = \ldots = (\chi_k(0), \tau_k(0))$, and the union of whose images is precisely the 
zero set of $F_{xx}$ in $W$.  By analytic continuation of these mappings, the zero set of $F_{xx}$ in $H_+$ can be uniquely described as a union of real-analytic curves or
curve segments that have locally injective parameterizations of the form \eqref{parameterization} around each point, endpoints (if they exist) only on the line 
$t=0$, and whose intersections form a discrete subset of $H_+$.  We call these curves and curve segments \emph{edge contours}.

It is commonly observed computationally \cite{Yuille} that edge contours either form arcs from one point on the $x$-axis to another, or else 
extend from $t=0$ to $t=\infty$.  Solutions for which new edge contours are generated with increasing $t$ have not been observed numerically or analytically.  This observation has been formalized and proven in several ways \cite{Babaud, Hummel}; here we prove Theorem \ref{subset}, which strengthens previous formalizations. We begin with a lemma.

\begin{lemma}
\label{nocreate1}
If $F_{xx}(x_0, t_0) = 0$, then $F_{xx}$ has at least one zero in any rectangle $[x_0-\epsilon, x_0+\epsilon] \times [t_0 - \delta, t_0)$ with $\epsilon, \delta>0$.
\end{lemma}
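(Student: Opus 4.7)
The plan is to argue by contradiction via the maximum principle (Theorem \ref{maxprinciple}) applied to $F_{xx}$, which satisfies the heat equation alongside $F$. Suppose to the contrary that there exist $\epsilon, \delta > 0$ such that the half-open rectangle $R = [x_0-\epsilon, x_0+\epsilon] \times [t_0-\delta, t_0)$ contains no zero of $F_{xx}$. Since we only need to produce a contradiction for this single pair $(\epsilon, \delta)$, we may shrink $\delta$ if necessary to ensure $\delta < t_0$, so that $R$ and its closure lie in $H_+$, where $F_{xx}$ is analytic and satisfies the heat equation.

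Because $R$ is connected and $F_{xx}$ is continuous and nowhere vanishing on $R$, it has constant sign there. Without loss of generality $F_{xx} > 0$ on $R$; by continuity, $F_{xx} \geq 0$ on the closed rectangle $\bar R = [x_0-\epsilon, x_0+\epsilon] \times [t_0-\delta, t_0]$. The equality $F_{xx}(x_0, t_0) = 0$ therefore realizes the global minimum of $F_{xx}$ on $\bar R$.

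Now apply Theorem \ref{maxprinciple} to $-F_{xx}$ with $s_1(t) \equiv x_0 - \epsilon$, $s_2(t) \equiv x_0 + \epsilon$, $t_1 = t_0 - \delta$, $t_2 = t_0$. The point $(x_0, t_0)$ lies in the parabolic interior $D = (x_0-\epsilon, x_0+\epsilon) \times (t_1, t_2]$, since $x_0 - \epsilon < x_0 < x_0 + \epsilon$ and $t_0 = t_2 \in (t_1, t_2]$, and there $-F_{xx}$ attains its maximum value $0$ on $\bar D = \bar R$. The maximum principle therefore forces $-F_{xx} \equiv 0$, hence $F_{xx} \equiv 0$, on all of $\bar R$, directly contradicting $F_{xx} > 0$ on $R$.

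The only subtlety is confirming that $(x_0, t_0)$ lies in the parabolic interior $D$ rather than on an excluded piece of the boundary; this is built into the definition of $D$, whose top edge $t = t_2$ is included, and it is what allows the vanishing of $F_{xx}$ at $(x_0, t_0)$ to propagate downward through $\bar R$. Beyond this, the argument is a routine application of Theorem \ref{maxprinciple}; no analyticity or Puiseux expansion is required for the lemma itself, those being needed only for the stronger global edge-contour statements that build on it.
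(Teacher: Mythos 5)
Your proof is correct and follows essentially the same route as the paper: both apply the parabolic maximum principle (Theorem \ref{maxprinciple}) to $F_{xx}$ on the rectangle with $t_1=t_0-\delta$, $t_2=t_0$, using that $(x_0,t_0)$ lies in the parabolic interior $D$ (the top edge is included) so that the vanishing there is an extremum forcing $F_{xx}\equiv 0$, a contradiction. Your version merely spells out the constant-sign and continuity details and the harmless shrinking of $\delta$, which the paper leaves implicit.
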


\begin{proof}
Note that $F_{xx}$ satisfies the heat equation $\left(F_{xx}\right)_{xx} = \frac{1}{2} \left(F_{xx}\right)_t$ and is therefore subject to the maximum 
principle.  Assume the theorem is false, that is, there is a rectangle $[x_0-\epsilon, x_0+\epsilon] \times [t_0 - \delta, t_0)$ that contains no
zeros of $F_{xx}$.  Taking $t_1=t_0-\delta$, $t_2 = t_0$, $s_1(t) = x_0-\epsilon$, $s_2(t) = x_0+\epsilon$, and defining $D$ accordingly as in the statement of Theorem 6,
we find that $F_{xx}$ is either maximized or minimized
over $\bar{D} = [x_0-\epsilon, x_0+\epsilon] \times [t_0 - \delta, t_0]$ at $(x_0,t_0) \in D$ and hence $F_{xx} \equiv 0$ on $\bar{D}$, contradicting our assumption.
\end{proof}

As an immediate consequence, edge contours cannot have local minima in $t$:

\begin{corollary}
\label{nominimum}
If $(\chi, \tau):(-\epsilon, \epsilon) \rightarrow H_+$ is a local parameterization of an edge contour of $F$, then $\tau$ has no local minimum on $(-\epsilon, \epsilon)$.
\end{corollary}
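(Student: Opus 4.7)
The plan is to argue by contradiction in the same spirit as Lemma \ref{nocreate1}. Suppose $\tau$ attains a local minimum at some $s_0 \in (-\epsilon, \epsilon)$, and set $(x_0, t_0) := (\chi(s_0), \tau(s_0))$. By real-analyticity of $\tau$, either $\tau$ is constant near $s_0$ or the local minimum is strict. In the constant case, injectivity of $(\chi, \tau)$ forces $\chi$ to be non-constant on a neighborhood of $s_0$, so $F_{xx}(\chi(s), t_0) = 0$ holds on an interval of $s$-values. Analyticity of $F_{xx}(\cdot, t_0)$ in $x$ then gives $F_{xx}(\cdot, t_0) \equiv 0$ on $\R$, which makes $F(\cdot, t_0)$ affine; combined with $F(\cdot, t_0) \in L^1(\R) \cap C_0(\R)$ (the latter since $f \in L^1$ and $t_0 > 0$), this forces $F(\cdot, t_0) \equiv 0$, hence $f \equiv 0$, ruling out the edge contour we started with.

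It remains to handle the strict local minimum case: there is $\eta > 0$ with $\tau(s) > t_0$ for $0 < |s - s_0| < \eta$. Then the arc $(\chi, \tau)([s_0 - \eta, s_0 + \eta])$ lies in the closed half-plane $\{t \geq t_0\}$ and touches $\{t = t_0\}$ only at $(x_0, t_0)$. I next take a small neighborhood $W$ of $(x_0, t_0)$ on which the zero set of $F_{xx}$ decomposes as a finite union of analytic arcs meeting only at $(x_0, t_0)$ (per the analytic-resolution setup preceding the corollary). Our edge contour supplies one such arc, entirely contained in $\{t \geq t_0\} \cap W$. If no other arc through $(x_0, t_0)$ descends into $\{t < t_0\}$, then for $\epsilon', \delta > 0$ sufficiently small the rectangle $R = [x_0 - \epsilon', x_0 + \epsilon'] \times [t_0 - \delta, t_0)$ contains no zeros of $F_{xx}$ whatsoever, directly contradicting Lemma \ref{nocreate1}.

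The main obstacle is the degenerate subcase in which $(x_0, t_0)$ is a multiple-intersection point and some other analytic branch through $(x_0, t_0)$ drops below $t_0$, so the rectangle argument above cannot be used off the shelf. To rule this out I would exploit the heat-equation constraint. When $\partial_t F_{xx}(x_0, t_0) \neq 0$, the implicit function theorem expresses the zero set locally as a graph $t = T(x)$ with $T(x_0) = t_0$; implicit differentiation of $F_{xx}(x, T(x)) \equiv 0$ combined with $\partial_t F_{xx} = \tfrac{1}{2} \partial_x^2 F_{xx}$ forces $T''(x_0) < 0$, so that $T$ actually has a local maximum at $x_0$---contradicting the local minimality of $\tau$. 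The fully degenerate subcase $\nabla F_{xx}(x_0, t_0) = 0$ would be handled by an inductive analysis of the Taylor coefficients $a_{ij}$ of $F_{xx}$ at $(x_0, t_0)$, using the recursion $a_{i, j+1} = \tfrac{(i+1)(i+2)}{2(j+1)} a_{i+2, j}$ that the heat equation imposes to identify the leading nonzero coefficient and extract the sign information that rules out a local minimum.
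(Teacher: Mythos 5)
Your second paragraph is, in essence, the entirety of the paper's own argument: the paper derives Corollary \ref{nominimum} directly from Lemma \ref{nocreate1} as an ``immediate consequence,'' i.e.\ a local minimum of $\tau$ at $(x_0,t_0)$ would leave a rectangle $[x_0-\epsilon',x_0+\epsilon']\times[t_0-\delta,t_0)$ free of zeros of $F_{xx}$, contradicting the lemma. Your preliminary reduction (constant vs.\ strict minimum, with the constant case killed by analyticity of $F_{xx}(\cdot,t_0)$ and $F(\cdot,t_0)\in C_0$) is fine, and your observation that the rectangle argument is not airtight when another analytic branch through $(x_0,t_0)$ dips below $t_0$ is a genuine subtlety that the paper simply does not engage with. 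Your treatment of the nondegenerate subcase is also correct: with $\partial_t F_{xx}(x_0,t_0)\neq 0$, a critical point of the graph $t=T(x)$ forces $\partial_x F_{xx}(x_0,t_0)=0$, and then $\partial_x^2F_{xx}+\partial_tF_{xx}\,T''(x_0)=0$ together with $\partial_tF_{xx}=\tfrac12\partial_x^2F_{xx}$ gives $T''(x_0)=-2<0$, so only a local maximum can occur there.

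The gap is the fully degenerate subcase $\nabla F_{xx}(x_0,t_0)=0$, which you do not actually prove: you write down the correct recursion $a_{i,j+1}=\tfrac{(i+1)(i+2)}{2(j+1)}a_{i+2,j}$ and then promise an ``inductive analysis \ldots{} to extract the sign information,'' but no such analysis is carried out, and this is precisely the case in which the rectangle argument of Lemma \ref{nocreate1} can fail to give a contradiction (several branches crossing at $(x_0,t_0)$, one of them descending). So as a standalone proof the proposal is incomplete at its crux. To close it, note that the recursion makes every $a_{ij}$ a positive multiple of $a_{i+2j,0}$, so if $F_{xx}(\cdot,t_0)$ vanishes to finite order $k$ at $x_0$ (infinite order is excluded since $F_{xx}$ is analytic and not identically zero), the leading part of $F_{xx}$ at $(x_0,t_0)$ is $a_{k,0}$ times the degree-$k$ caloric (heat) polynomial, which for $t<t_0$ has $k$ simple zeros near $x_0$ and for $t>t_0$ has at most one; consequently at most one arc of the local zero set leaves $(x_0,t_0)$ into $\{t>t_0\}$, whereas an injectively parameterized contour with a strict local minimum of $\tau$ at $s_0$ would contribute two such arcs. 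That sign/zero-counting step is exactly the missing content; without it (or an appeal to the local nodal-structure results of the cited literature, e.g.\ Angenent), the degenerate case remains open in your write-up --- though, to be fair, the paper's one-line deduction does not address it either.
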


We next define a \emph{persistent} edge contour as an edge contour that extends to arbitrarily large values of $t$ (or equivalently, arbitrarily large values of $\sigma = \sqrt{t}$).  We immediately obtain the following two results:

\begin{corollary}
\label{onetoone}
A persistent edge contour can intersect the line $t=t_1$ no more than once.
\end{corollary}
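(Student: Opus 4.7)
The plan is to parameterize the persistent edge contour $C$ globally as a real-analytic curve $s \mapsto (\chi(s), \tau(s))$ on an interval $I \subseteq \R$, and then to show that the $t$-coordinate $\tau$ is strictly monotonic in $s$. Injectivity of a monotonic function then immediately yields that the horizontal line $t = t_1$ is met at most once. The parameter domain $I$ is either $\R$ or a half-line $[s_0, \infty)$ depending on whether $C$ has an endpoint on $t = 0$; persistence translates to $\sup_I \tau = \infty$; and Corollary \ref{nominimum} ensures that $\tau$ has no local minimum in the interior of $I$.

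The main work is the following real-variable lemma: a continuous function $\tau : I \to \R$ with no interior local minimum and with $\sup_I \tau = \infty$ must be monotonic. I would first establish the ``no-dip'' property that for every $a < b < c$ in $I$, $\tau(b) \ge \min(\tau(a), \tau(c))$, since otherwise the minimum of $\tau$ on $[a,c]$ would be strictly less than both endpoint values and hence attained at an interior local minimum. Given no-dip, fix any interior $s_0 \in I$; using $\sup_I \tau = \infty$, select $s_N \in I$ with $\tau(s_N) > \tau(s_0)$, and no-dip then forces $\tau(s) \ge \tau(s_0)$ for every $s$ strictly between $s_0$ and $s_N$. If $\tau$ were unbounded above on both sides of $s_0$, iterating this would make $s_0$ an interior global minimum, a contradiction; so $\tau$ is unbounded above on only one side, say as $s$ increases. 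A final application of no-dip then shows $\tau(s_1) \le \tau(s_2)$ whenever $s_1 < s_2$, and analyticity upgrades non-decreasing to strictly increasing (a non-constant analytic function cannot be constant on any subinterval of its domain).

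The main obstacle I anticipate is the global parameterization step, since the excerpt only guarantees local injectivity of parameterizations of edge contours and allows them to self-intersect at a discrete set of points. To handle this I would argue that one can follow $C$ along a single smooth branch from any base point out to $t = \infty$, and then apply the monotonicity argument to $\tau$ along this branch; Corollary \ref{nominimum} applies to each individual analytic branch through any crossing point, so the conclusion is unaffected.
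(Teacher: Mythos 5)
Your argument is correct and is essentially the justification the paper has in mind: the paper states Corollary \ref{onetoone} as an immediate consequence of Corollary \ref{nominimum} and persistence, and your real-variable lemma (no interior local minimum of $\tau$ plus $\sup\tau=\infty$ forces monotonicity along the analytically continued parameterization) simply spells out that ``immediate'' step. The global-parameterization point you flag is handled the same way in the paper's framework, since each edge contour is by construction the analytic continuation of a single local arc, so no genuinely different route is involved.
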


\begin{corollary}
\label{nomaximum}
If $(\chi, \tau):(-\epsilon, \epsilon) \rightarrow H_+$ is a local parameterization of a persistent edge contour of $F$, then $\tau$ has no local maximum on $(-\epsilon, \epsilon)$.
\end{corollary}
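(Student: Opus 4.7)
The plan is to derive a contradiction with Corollary \ref{onetoone}, which says a persistent edge contour meets each horizontal line $t = t_1$ in at most one point. If $\tau$ had a local maximum at some $s_0 \in (-\epsilon, \epsilon)$, I would produce two distinct points of the contour sharing the same $t$-coordinate just below $\tau(s_0)$, contradicting this one-to-one property.

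First I would set $t_0 = \tau(s_0)$ and split into cases based on the real-analytic behaviour of $\tau$ near $s_0$. In the generic case, the analytic function $\tau - t_0$ does not vanish identically on any subinterval, so its zeros are isolated; after shrinking $\epsilon$, we may therefore assume $\tau(s) < t_0$ strictly for every $s \neq s_0$ in the domain. Applying the intermediate value theorem separately on $(s_0 - \eta, s_0]$ and $[s_0, s_0 + \eta)$ produces, for each $t'$ in a small one-sided neighborhood $(t_0 - \delta, t_0)$, two parameter values $s_L < s_0 < s_R$ with $\tau(s_L) = \tau(s_R) = t'$. Local injectivity of $(\chi, \tau)$ forces $\chi(s_L) \neq \chi(s_R)$, because equality of both coordinates would collapse two distinct parameter values to a single image point. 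Hence $(\chi(s_L), t')$ and $(\chi(s_R), t')$ are two distinct points of the persistent contour on the line $t = t'$, contradicting Corollary \ref{onetoone}.

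Next I would dispatch the degenerate case in which $\tau \equiv t_0$ on some subinterval: by real-analyticity on $(-\epsilon, \epsilon)$, this forces $\tau \equiv t_0$ throughout the parameter domain, and the local parameterization then traces a nondegenerate horizontal arc at height $t_0$. This immediately produces infinitely many distinct points of the contour on the line $t = t_0$, again contradicting Corollary \ref{onetoone}.

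I expect the main obstacle to be conceptual rather than technical: the real content of the corollary is not in any new analytic estimate, but in recognizing that a local maximum of a real-analytic height function automatically produces a ``cap'' shape whose level sets slightly below the peak are two-point sets, which is precisely what Corollary \ref{onetoone} forbids for persistent contours. The only mildly subtle point is ruling out the constant case, and this is handled immediately by the identity principle for real-analytic functions.
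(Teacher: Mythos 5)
Your proof is correct, but it takes a different route from the paper's. The paper's argument is purely topological and one line long: a persistent contour has no global maximum in $t$ (it extends to arbitrarily large $t$), and a local maximum that is not a global maximum forces the existence of a local minimum somewhere along the curve, which Corollary \ref{nominimum} forbids. You instead reduce to Corollary \ref{onetoone}: a local maximum of the real-analytic height function $\tau$ produces, by the identity principle and the intermediate value theorem, two parameter values $s_L \neq s_R$ with $\tau(s_L)=\tau(s_R)=t'$ just below the peak, and (local) injectivity of $(\chi,\tau)$ --- which the paper's construction of edge contours does supply --- turns these into two distinct points of the persistent contour on the line $t=t'$, contradicting the at-most-one-intersection property; the degenerate case $\tau\equiv t_0$ is killed the same way. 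Both routes ultimately rest on Lemma \ref{nocreate1}, yours through Corollary \ref{onetoone} and the paper's through Corollary \ref{nominimum}; since Corollary \ref{onetoone} is stated and justified before Corollary \ref{nomaximum} and is not derived from it, there is no circularity in your use of it. What your version buys is an explicit, self-contained level-set argument that does not invoke the slightly informal ``every non-global local max is paired with a local min'' topological claim; what it costs is the extra bookkeeping (identity principle, shrinking the parameter interval, injectivity of the parameterization) that the paper's one-sentence global argument avoids.
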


\begin{proof}
Topologically, for each local maximum of a curve that is not a global maximum, there must be also be a local minimum.  Since persistent edge contours have no global maxima or local minima, they therefore cannot have local maxima.
\end{proof}

We now prove Theorem \ref{subset}, which formalizes the observation that edge contours are not generated with increasing $t$:

\begin{edgethm}
For $0<t_1 < t_2$, the edge contours of $f \in \cP'_\gamma \cap L^1(\R)$ intersecting the line $t=t_2$ are a subset of those that intersect $t=t_1$.
\end{edgethm}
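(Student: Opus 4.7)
The plan is to use Corollary \ref{nominimum}—that edge contours have no local minimum in $t$—to trace the $t$-coordinate along $C$ from $t_2$ downward until it hits $t_1$. Corollary \ref{nominimum} also excludes closed-loop contours (a loop would attain a global, hence local, minimum of $\tau$), so $C$ is a connected non-compact real-analytic 1-manifold in $H_+$, parametrized by a real-analytic map $\gamma \colon I \to H_+$ on an open interval $I$, with $\gamma(s_0) = P = (x_0, t_2)$. Writing $\tau(s)$ for the $t$-coordinate of $\gamma(s)$, $\tau$ is continuous, real-analytic on $I$, and has no local minimum. Consider the connected component $J \subseteq I$ of $\tau^{-1}([t_1, t_2])$ containing $s_0$, a closed sub-interval of $I$. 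The boundary points of $J$ internal to $I$ are points where $\tau$ exits $[t_1,t_2]$, each with $\tau \in \{t_1, t_2\}$; a boundary point with $\tau = t_1$ yields $C \cap \{t = t_1\} \neq \emptyset$ as desired, so I assume for contradiction that every such internal boundary point of $J$ has $\tau = t_2$.

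In the case that $J$ is compact, $\tau|_J$ attains its minimum, which by Corollary \ref{nominimum} is not attained in the interior of $J$; combined with the assumption, it must then equal $t_2$, forcing $\tau \equiv t_2$ on $J$ and, by real-analyticity, on all of $I$. Hence $C$ lies on the horizontal line $t = t_2$, so $F_{xx}(\cdot, t_2) \equiv 0$ on an interval of $x$-values and, by analyticity in $x$, on all of $\R$. This makes $F(\cdot, t_2)$ affine in $x$ and, by its membership in $L^1(\R)$, identically zero; via the heat equation and the joint analyticity of $F$ on $H_+$, this forces $f \equiv 0$, a contradiction.

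The remaining case is $J$ non-compact, extending to a boundary endpoint of $I$. Since $\tau$ is bounded in $[t_1, t_2]$ on $J$, $\gamma(s)$ cannot approach the $t = 0$ line of $H_+$ at this end (which would force $\tau \to 0$); so by the structure of edge contours, $\gamma(s)$ must escape to spatial infinity, with $|x(\gamma(s))| \to \infty$ and $\tau(s) \in [t_1, t_2]$. Ruling out this escape is the main obstacle of the proof. I would attack it by combining the uniform exponential decay $|F_{xx}(x, t)| = O(e^{-\gamma' |x|})$ in $x$ on compact $t$-intervals—a consequence of $f \in \cP_\gamma' \cap L^1(\R)$ via the same Paley--Wiener-type estimate used in the proof of Theorem \ref{generalmarr}—with the parabolic maximum principle (Theorem \ref{maxprinciple}) applied to a truncated rectangle $[-R, R] \times [t_1 - \delta, t_2]$ for $R$ large: the decay of $F_{xx}$ on the vertical sides $\{|x| = R\}$ as $R \to \infty$ forces the zero set of $F_{xx}$ within the rectangle to remain bounded in $x$, contradicting the hypothesized escape and completing the proof.
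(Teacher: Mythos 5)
Your reduction is sound and in fact parallels the paper's own first step: Corollary \ref{nominimum} rules out loops and local minima in $t$, the horizontal-contour possibility collapses by analyticity, and the whole theorem comes down to excluding a contour that escapes to $|x|=\infty$ while its $t$-coordinate stays trapped in $[t_1,t_2]$. The gap is in your final step, which is exactly the crux. The inference ``exponential decay of $F_{xx}$ on the vertical sides of $[-R,R]\times[t_1-\delta,t_2]$, plus the parabolic maximum principle, forces the zero set of $F_{xx}$ to stay bounded in $x$'' is a non sequitur. The maximum principle only bounds interior values of $F_{xx}$ by its values on the parabolic boundary (the bottom $t=t_1-\delta$ and the lateral sides $|x|=R$); it says nothing about where $F_{xx}$ vanishes, and smallness of $F_{xx}$ at large $|x|$ is entirely compatible with zeros there --- indeed $F_{xx}(\cdot,t)\to 0$ as $|x|\to\infty$ for every fixed $t>0$, so the function is automatically small along any hypothetical escaping contour. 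To convert the maximum principle into a statement about absence of zeros you would need sign control of $F_{xx}$ on the bottom edge $t=t_1-\delta$ at large $|x|$, which is precisely the kind of information you do not have; it is essentially the assertion being proved, one time level lower.

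The paper closes this gap with a different deployment of the same two ingredients: it uses the hypothetical escaping zero curve itself as part of the boundary. For the region $D_i$ bounded by the curve, the line $t=t_2$, and the vertical line $x=x_i$, the maximum principle (Theorem \ref{maxprinciple}) forces the maximum of $|F_{xx}|$ over $D_i$ onto the segment $x=x_i$, since $F_{xx}\equiv 0$ on the curve; and because $D_1\subset D_2\subset\cdots$ are nested, these maxima are nondecreasing, yielding points $(x_i,t_i^*)$ with $x_i\to\infty$, $t_i^*\in(t',t_2]$, and $|F_{xx}(x_i,t_i^*)|\geq |F_{xx}(x_1,t_1^*)|>0$. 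Only then is decay invoked: $f\in\cP_\gamma'$ together with $G^{(2)}_{\sqrt{t_i^*}}(x_i-\cdot)\to 0$ in the topology of $\cP_\gamma$ gives $F_{xx}(x_i,t_i^*)\to 0$, a contradiction. So your decay estimate is the right second half of the argument, but without a device of this kind --- one that propagates a fixed positive lower bound on $|F_{xx}|$ out to arbitrarily large $x$ along the strip --- the escape case is not excluded, and your proof does not go through as written.
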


\begin{proof}
Since edge contours are never minimized in $t$ (Corollary \ref{nominimum}), it only needs to be shown that there is no edge contour whose $t$-value decreases asymptotically to an intermediate value $t'$, $t_1 < t' < t_2$, as its $x$-value diverges 
to positive or negative infinity.

Assume the contrary, and without loss of generality assume the $x$-value of the edge contour in question diverges to positive infinity as $t$ decreases to $t'$.
Then there is a locally analytic curve $s \mapsto (\chi(s), \tau(s))$ defined for all $s$ greater than some $s_0$, with 
\begin{itemize}
\item $F_{xx}(\chi(s), \tau(s)) = 0$, $\forall s > s_0$,
\item $\tau(s)$ monotonically decreasing,
\item $\lim_{s \rightarrow \infty} \chi(s) = \infty$,
\item $\lim_{s \rightarrow \infty} \tau(s) = t'$.
\end{itemize}

For $x_1>\chi(s_0)$, define $D_1 \subset H_+$ to be the closed connected region bounded by the curve $(\chi(s), \tau(s))$ and the lines $t=t_2$ and $x=x_1$ (Figure 
\ref{proofpic}).  Since $F_{xx}$ is zero on the curve $(\chi(s), \tau(s))$, the maximum principle (Theorem \ref{maxprinciple}) implies that $|F_{xx}(x,t)|$ achieves its maximum value over $D_1$ at a point on the line $x=x_1$.  We denote this maximizing point $(x_1, t_1^*)$.  
For any $x_2>x_1$, $|F_{xx}|$ achieves its maximum over the domain $D_2$ (defined similarly to $D_1$ with $x_1$ replaced by $x_2$; see Figure \ref{proofpic}) at a point on the line $x=x_2$,
 which we denote $(x_2, t_2^*)$.
Moreover, $|F_{xx}(x_2,t_2^*)|> |F_{xx}(x_1,t_1^*)|$ since $D_1 \subset D_2$.  Iterating this argument, we obtain a sequence $\{(x_i, t^*_i)\}_{i \geq 1}$, with 
$x_i \rightarrow \infty$ and $|F_{xx}(x_i, t^*_i)|$ monotonically increasing.  Thus 
\begin{equation}
\label{Fxxnonzero}
\lim_{i \to \infty} |F_{xx}(x_i, t^*_i)| > 0.
\end{equation}

\begin{figure}
\begin{center}
\includegraphics{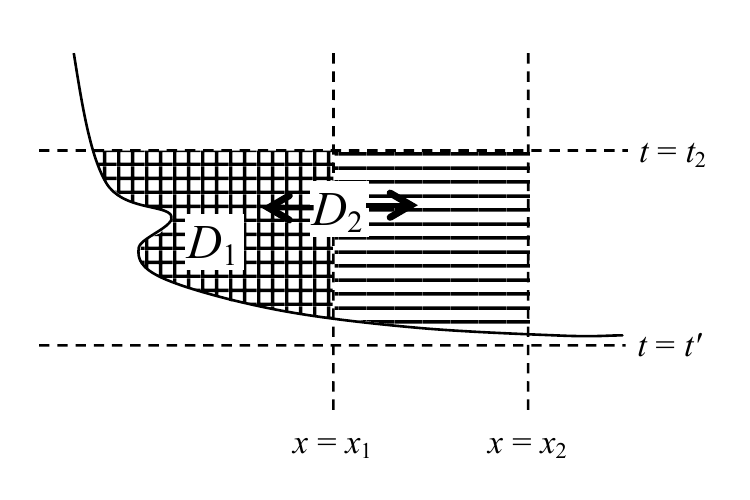}
\caption{Regions defined in proof of Theorem \ref{subset}.  $D_1$ is bounded by the curve $(\chi(s), \tau(s))$ and the lines $t=t_2$ and $x=x_1$.  $D_2$ is bounded by the curve $(\chi(s), \tau(s))$ and the lines $t=t_2$ and $x=x_2$, with $x_2 > x_1$.}
\label{proofpic}
\end{center}
\end{figure}

Since $x_i \to \infty$ while $t_i^*$ is confined to the interval $(t', t_2]$ for each $i$, it is easily verified that the sequence of functions
\[
\left \{ G_{\sqrt{t_i^*}}^{(2)} (x_i - \placehold) \right \}_{i \geq 1}
\]
converges as $i \to \infty$ to the zero function in the topology of $\cP_\gamma$.  (Above, $\placehold$ represents the function argument of elements of $\cP_\gamma$.) Then, since $f \in \cP_\gamma'$,
\[
 \lim_{i \to \infty} F_{xx}(x_i,t_i^*) = \lim_{i \to \infty} f * G_{\sqrt{t_i^*}}^{(2)} (x_i)
 =  \lim_{i \to \infty}  \left \langle f(\placehold), G_{\sqrt{t_i^*}}^{(2)} (x_i - \placehold) \right \rangle = 0.
\]
This contradicts \eqref{Fxxnonzero}; hence no such edge contour exists.
\end{proof}

\begin{comment}
We note that the above theorem and its proof can be extended to apply to the zeros of any heat equation solution of the form $Q(x;t)=h * G_{\sqrt{t}}(x)$, where $h$ is a tempered distribution satisfying $\lim_{|x| \to \infty} h*G(x) = 0$.  To show this, one need only verify that the sequence of functions 
\[
G_{\sqrt{t_i^*}} (x_i^* - \placehold) - G_{\sqrt{t_\infty^*}} (x_i^* - \placehold)
\]
converges as $i \to \infty$ to the zero function in Schwarz space.  We will make use of this observation in Section \ref{necessity}.
\end{comment}

%%%%%%%%%%%%%%%%%%%%%%%%%%%%%%%%%%%%%%%%%%%%%%%%%%%%%%%%%%%%%%%%%%%%%%%%%%%%%%%%%%%%%%%%%%%%%%%%%%%%%%%%%%%%%%

\section{Reconstruction From Other Edge Sequences}
\label{finiteedges}

The above proof of unique determination from Gaussian edges (Marr's conjecture; Corollary \ref{cor:OneGaussian}(a)) uses only the asymptotics of the edges of $f$ for large scales $\sigma$.  This is unexpected, 
since one would anticipate more information would arise from small-scale rather than large-scale edges.
We show here that in the one-dimensional Gaussian case, a sequence of bounded-scale edges (i.e.~with $\sigma$ remaining bounded) is also sufficient to uniquely determine any
$f \in \cP_\gamma' \cap L^1(\R)$, as long as the sequence of scales has a positive limit point.

\subsection{Sequences of Scales With a Positive Limit Point}

With $f$ and $F$ as above, let $\{t_j\}_{j \geq 1}$ be a sequence of positive real numbers with a limit point $t' >0$, for which the solutions 
(in $x$) to $F_{xx}(x, t_j)=0$ are given.  

The asymptotic edge (Section \ref{generalproof}) for the one-dimensional Ricker wavelet is given by the zeros of $H_{n_0+2} \left(x/\sqrt{t} \right)$, where $n_0$ is the order of the first nonzero moment of $f$.  Since $H_{n_0+2}$ has $n_0+2$ distinct regular real roots, there are exactly $n_0 + 2$ persistent edge contours.  Theorem \ref{subset} implies that the persistent edge contours intersect the lines $t=t_j$ for all $j$, as well as the limiting line $t=t'$.  Further, by Lemma \ref{nocreate1}, the persistent edge contours cross the lines $t=t_j$ rather than achieving local minima at the intersection points. Thus by analytic continuation, the persistent edge contours are uniquely determined by the given solutions to $F_{xx}(x,t_j)=0$.  The infinite-limit case of Corollary \ref{cor:OneGaussian}(a) guarantees that the persistent edge contours uniquely determine $f$.  We have thus proved:

\begin{cor2a} \emph{(General case)}
Any $f \in \cP_\gamma' \cap L^1(\R)$ is uniquely determined, up to a constant multiple, by its Gaussian edges at any set of scales with at least one limit point in $(0, \infty]$.
\end{cor2a}

\subsection{Sequences of Scales Converging Only to Zero}

Perhaps surprisingly, unique determination is not guaranteed for a set of scales whose only limit point is zero, as stated in Corollary \ref{cor:OneGaussian}(b).  To demonstrate this, we construct a compactly supported  $h \in  L^1(\R)$ with the property that the Gaussian edges of $G(x)=(2\pi)^{-1/2}e^{-x^2/2}$ and $G(x)+h(x)$ agree on an infinite sequence of scales $\{ \sigma_k \} \rightarrow 0$.  The function $h$ is defined by its second derivative $\Delta h \in \cP'$, which we represent as an infinite sum
\[
\Delta h = \sum_{n=1}^\infty c_n J_{\alpha_n, \beta_n}.
\] 
Above, for any real numbers $0 < |\beta| < \alpha < 1$, the distribution $J_{\alpha, \beta} \in \cP'$ is defined as a combination of point masses located at $x=\pm(1+\beta) \pm \alpha$:
\begin{align*}
J_{\alpha,\beta}(x) & =  \delta(x + 1 + \alpha + \beta) - \delta(x + 1 - \alpha + \beta) \\
& \quad - \delta(x-1 + \alpha - \beta) + \delta(x-1 - \alpha - \beta).
\end{align*}
We will choose $c_n$, $\alpha_n$, and $\beta_n$ inductively, so that the edge contours of $G+h$ oscillate about those of $G$ as
$\sigma \rightarrow 0$.  

We begin by setting $c_1 =1$ and choosing $0< -\beta_1<\alpha_1<1$ arbitrarily. We define $h_1$ by 
\[
\Delta h_1(x) = c_1 J_{\alpha_1, \beta_1}(x),
\]
together with the requirement that $h_1$ be compactly supported.  (We assume all $h_n$ are compactly supported, and so can
be defined by their second derivatives.)  The function $h_1$ is illustrated in Figure \ref{Jfig}.

There are two edge contours of $G$ (i.e. zero curves of $\Delta (G*G_\sigma)=G^{(2)}_{\sqrt{\sigma^2+1}}$), described by $x= \pm \sqrt{\sigma^2+1}$.
Since $J_{\alpha_1, \beta_1}$ is nonnegative/nonpositive wherever $G^{(2)}$ is, the addition of $h_1$ to $G$ creates no new 
edge contours (any such created edge contours would have to manifest themselves at 
arbitrarily small scales by Theorem \ref{subset}),  
and perturbs the edge contours of $G$ symmetrically about the $\sigma$-axis (by the symmetry of $J_{\alpha_1, \beta_1}$).  

Furthermore, since $\beta_1<0$, the positive point masses of $J_{\alpha_1, \beta_1}$ are closer to $\pm 1$ than the negative ones.  Thus there is a sufficiently small $\sigma_1>0$, 
for which
\[
\Delta (h_1 * G_{\sigma_1} ) \left(\pm \sqrt{\sigma_1^2 + 1} \right) > 0.
\]

Now suppose inductively that for some $n \geq 1$ we have a compactly supported function $h_n \in L^1(\R)$ such that $\Delta h_n$ is zero in neighborhoods of $\pm 1$,
and a strictly decreasing sequence of positive scales $\{ \sigma_1, \ldots \sigma_n \}$ such that
\begin{equation}
\label{zeroscaleinduct}
\begin{cases}
\Delta (h_n * G_{\sigma_k}) \left(\pm \sqrt{\sigma_k^2 + 1} \right) > 0 & \text{for $k$ odd}\\
\Delta (h_n * G_{\sigma_k}) \left(\pm \sqrt{\sigma_k^2 + 1} \right) < 0 & \text{for $k$ even,} \qquad 1 \leq k \leq n.
\end{cases}
\end{equation}

As an induction step, we will choose real numbers $c_{n+1}>0$, $0 < (-1)^{n+1} \beta_{n+1} < \alpha_{n+1}<1$, and $0 < \sigma_{n+1} < \sigma_n$,
such that if $h_{n+1} \in L^1(\R)$ is defined by
\begin{align*}
\Delta h_{n+1}(x) = & \Delta h_n(x) + c_{n+1} J_{\alpha_{n+1}, \beta_{n+1}},
\end{align*}
then 
\begin{equation}
\label{zeroscaleinduct2}
\begin{cases}
\Delta (h_{n+1} * G_{\sigma_k}) \left(\pm \sqrt{\sigma_k^2 + 1} \right) > 0 & \text{for $k$ odd}\\
\Delta (h_{n+1} * G_{\sigma_k}) \left(\pm \sqrt{\sigma_k^2 + 1} \right) < 0 & \text{for $k$ even,} \qquad 1 \leq k \leq n+1.
\end{cases}
\end{equation}

\begin{figure}
\begin{center}
\includegraphics{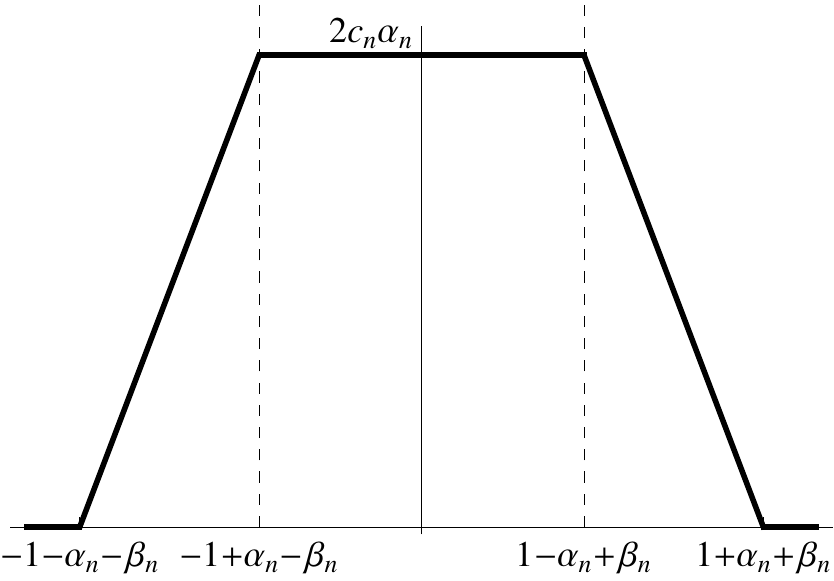}
\caption{The graph of $h_{n}-h_{n-1}$, which satsifies $\Delta (h_{n}-h_{n-1}) = c_n J_{\alpha_n, \beta_n}$.  Note that $\displaystyle \int_{-\infty}^\infty | h_{n}(x)-h_{n-1}(x)| \; dx = 4c_n \alpha_n (1+\beta_n)$.  For $n=1$ this is the graph of $h_1$.  $h$ is constructed as an infinite sum of functions of this form.}
\label{Jfig}
\end{center}
\end{figure}

First, note that for any fixed $\sigma$ (in particular for $\sigma=\sigma_k$ with $1 \leq k \leq n$), the quantity
\[
\left | J_{\alpha_{n+1}, \beta_{n+1}} * G_\sigma (x) \right|
\]
is uniformly bounded over all choices of $\alpha_{n+1}$ and $\beta_{n+1}$ and all $x$.  Thus for $c_{n+1}$ sufficiently small,  
the desired relationships \eqref{zeroscaleinduct2} hold for $k \leq n$ no matter the values of $\alpha_{n+1}$ and $\beta_{n+1}$.
We choose $c_{n+1}$ so that this property is satisfied and also $c_{n+1} < c_n/2$.

Second, since $\Delta h_n$ is zero in neighborhoods of $\pm 1$, there exist arbitrarily small $\sigma_{n+1}$, $\alpha_{n+1}$ and $\beta_{n+1}$
such that
\[
\Delta (h_n * G_{\sigma_{n+1}}) \left( \pm \sqrt{\sigma_{n+1}^2 + 1} \right)
\]
is arbitrarily small in magnitude relative to
\[
J_{\alpha_{n+1}, \beta_{n+1}} * G_{\sigma_{n+1}} \left( \pm \sqrt{\sigma_{n+1}^2 + 1} \right),
\]
and therefore the sign of 
\begin{multline*}
\Delta (h_{n+1} * G_{\sigma_{n+1}}) \left( \pm \sqrt{\sigma_{n+1}^2 + 1} \right) \\
= \Delta (h_n * G_{\sigma_{n+1}} ) \left( \pm \sqrt{\sigma_{n+1}^2 + 1} \right) + c_{n+1} J_{\alpha_{n+1}, \beta_{n+1}} * G_{\sigma_{n+1}} \left( \pm \sqrt{\sigma_{n+1}^2 + 1} \right)
\end{multline*}
coincides with that of 
\[
J_{\alpha_{n+1}, \beta_{n+1}} * G_{\sigma_{n+1}} \left( \pm \sqrt{\sigma_{n+1}^2 + 1} \right).
\]

Finally, since $\alpha_{n+1}$ and $\beta_{n+1}$ were chosen to satisfy $0 < (-1)^{n+1} \beta_{n+1} < \alpha_{n+1}$, it follows that
\[
(-1)^n J_{\alpha_{n+1}, \beta_{n+1}} * G_{\sigma_{n+1}} \left( \pm \sqrt{\sigma_{n+1}^2 + 1} \right) > 0,
\]
and hence
\[
(-1)^n \, \Delta( h_{n+1} * G_{\sigma_{n+1}} )\left( \pm \sqrt{\sigma_{n+1}^2 + 1} \right) >0,
\]
as desired.  This completes the inductive construction of $h_{n+1}$ and $\sigma_{n+1}$.

Having defined the partial sums $h_n$ inductively, we now define $h$ to be their limit in the $L^1$ topology.  This limit exists because $h_n - h_{n-1}$ has $L^1$-norm $4 \alpha_n c_n(1+\beta_n)$ (see Figure \ref{Jfig}), and therefore the $L^1$-norm of $h_n$ is bounded for each $n$ by  $4 \sum_{m=1}^\infty c_m \alpha_m(1+\beta_m)$. This sum converges since the $c_n$ are bounded by a geometric sequence 
($c_{n+1}<c_n/2$), and $|\beta_n|<|\alpha_n| < 1$ for each $n$.

In this limit, the relationships \eqref{zeroscaleinduct} are preserved with $\leq$ in place of $<$:
\[
\begin{cases}
\Delta (h * G_{\sigma_k}) \left(\pm \sqrt{\sigma_k^2 + 1} \right) \geq 0 & \text{for $k$ odd}\\
\Delta (h * G_{\sigma_k}) \left(\pm \sqrt{\sigma_k^2 + 1} \right) \leq 0 & \text{for $k$ even.}
\end{cases}
\]
This implies that the edge contours of $G$ and $G+h$ cross infinitely often as $\sigma \rightarrow 0$.  Since the two edge contours of both $G$ and
$G + h$ are symmetric about the $\sigma$-axis, the intersections of edge contours on each side of the $\sigma$-axis occur at the same $\sigma$-values.
Thus the edges of $G$ and $G+h$ agree on an infinite sequence of scales tending to zero.  This proves Corollary \ref{cor:OneGaussian}(b).

%%%%%%%%%%%%%%%%%%%%%%%%%%%%%%%%%%%%%%%%%%%%%%%%%%%%%%%%%%%%%%%%%%%%%%%%%%%%%%%%%%%%%%%%%%%%%%%%%%%%%%%%%%%%%%%%%%%%%%%%%%%%%

\section{Distributions with Finitely Many Moments}
\label{FiniteMoment}

We have shown that any one-dimensional function with exponential decay is uniquely determined by a sequence of scaled Gaussian edges, thus giving a sufficient condition for the Marr conjecture in one dimension.  One can ask whether this result could be extended to functions that decay less rapidly---for example, functions with algebraic decay.  Addressing this question requires a formal notion of distributions with only finitely many moments.  To that end, this section introduces the space $\cM_N$ of smooth test functions of asymptotic order $|x|^N$ or less, and its dual $\cM_N'$, whose elements are distributions with moments through order $N$.  We first define these spaces, then consider derivatives and antiderivatives of distributions in $\cM_N'$, and finally we prove the existence and continuity of asymptotic moment expansions for such distributions.  

We consider only one-dimensional distributions, but the definitions and results presented here can readily be generalized to arbitrary dimensions.

\subsection{Definitions}

For any nonnegative integer $N$, let $\cM_N$ denote the space of smooth test functions $\psi$ on $\R$ such that, for each integer $n \geq 0$, the seminorm
\begin{equation}
\label{Mseminorm}
|| \psi ||_{N,n} = \sup_{x \in \R} \; (1+|x|)^{-(N-n)} |\psi^{(n)}(x)|
\end{equation}
is finite.  (These seminorms were first introduced by H\"{o}rmander \cite{Hormander} and are often used to define symbol classes of pseudodifferential operators \cite[e.g.]{Taylor}.)

The topology on $\cM_N$ is generated by the family of seminorms $|| \; ||_{N,n}$ for $n \geq 0$.  Functions in $\cM_N$ behave asymptotically as $|x|^N$ or less, and their $n$th derivatives behave asymptotically as $|x|^{N-n}$ or less.  In particular, $x^m \in \cM_N$ for each integer $0 \leq m \leq N$.  We also note from \eqref{Mseminorm} that for $M \leq N$, $|| \psi ||_{N,n} \leq || \psi ||_{M,n}$ for each $\psi \in \cM_M$ and $n \geq 0$, and it follows that $\cM_M \subset \cM_N$.

We denote the dual space of distributions on $\cM_N$ by $\cM_N'$.  Distributions in $\cM_N'$ have moments through order $N$, where the $n$th moment of $f \in \cM_N'$ is defined as
\[
\mu_n(f) \equiv \langle f,x^n \rangle.
\]
For $M \leq N$, we have $\cM_N' \subset \cM_M'$ since $\cM_M \subset \cM_N$. We also
note that for all $N$, $\cM_N \subset \cP$ and hence $\cP' \subset \cM_N'$.

\subsection{Derivatives and Antiderivatives}

We observe from \eqref{Mseminorm} that for each $n \geq 0$, $0 \leq m \leq N$,
\begin{equation}
\label{seminormder}
|| \psi^{(m)} ||_{N-m,n}=||\psi ||_{N,n+m},
\end{equation}
and therefore, $\psi^{(m)} \in \cM_{N-m}$ whenever $\psi \in \cM_N$.  This relation also shows that the derivative is a continuous linear functional from $\cM_N$ to $\cM_{N-1}$.  

The derivative of a distribution $f \in \cM_N'$ is defined as an element of $f' \in \cM_{N+1}'$ by the relation
\begin{equation}
\label{distder}
\langle f', \psi \rangle = - \langle f, \psi' \rangle,
\end{equation}
for all $\psi \in \cM_{N+1}$.  By extension, the $m$th derivative of $f \in \cM_N'$, denoted $f^{(m)}$, is an element of $\cM_{N+m}'$, for each integer $m \geq 0$.

We can also define the antiderivative of a distribution $f \in \cM_N'$, provided that $f$ has vanishing zeroth moment.  This definition requires the following lemma regarding antiderivatives of test functions:

\begin{lemma}
\label{testantider}
If $\psi$ is a smooth function and $\psi' \in \cM_{N-1}$, $N \geq 1$, then $\psi \in \cM_N$.
\end{lemma}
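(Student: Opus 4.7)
The plan is to verify directly that each seminorm $\|\psi\|_{N,n}$ is finite, splitting into the easy case $n\geq 1$ and the main case $n=0$. For $n\geq 1$, the identity \eqref{seminormder} applied with $m=1$ (and index $n-1$) gives
\[
\|\psi\|_{N,n} \;=\; \|\psi'\|_{N-1,n-1},
\]
which is finite by hypothesis. So for these indices no work is needed.

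The content of the lemma is the case $n=0$, where I need to upgrade a bound on $\psi'$ to a bound on $\psi$. By hypothesis with $n=0$ we have $|\psi'(t)|\leq C(1+|t|)^{N-1}$ for some constant $C$. I would then integrate from $0$ to $x$ along the real line, using the fundamental theorem of calculus (valid because $\psi$ is smooth), to obtain
\[
|\psi(x)| \;\leq\; |\psi(0)| + \int_0^{|x|} C(1+t)^{N-1}\,dt \;\leq\; |\psi(0)| + \frac{C}{N}\bigl((1+|x|)^{N}-1\bigr).
\]
Here the hypothesis $N\geq 1$ is exactly what guarantees that this antiderivative of $(1+t)^{N-1}$ is the polynomial $(1+t)^{N}/N$ (no logarithm appears). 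Dividing through by $(1+|x|)^{N}$ yields
\[
\|\psi\|_{N,0} \;=\; \sup_{x\in\R}(1+|x|)^{-N}|\psi(x)| \;\leq\; |\psi(0)| + \frac{C}{N} \;<\; \infty.
\]

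There is no genuine obstacle: the argument is a one-line integration once the seminorm definitions are unpacked. The only thing to watch is making sure one invokes $N\geq 1$ when integrating $(1+t)^{N-1}$, and recognizing that the identity \eqref{seminormder} disposes of all higher-derivative seminorms for free.
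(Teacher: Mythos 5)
Your proof is correct and follows essentially the same route as the paper: reduce the seminorms with $n\geq 1$ to those of $\psi'$ via \eqref{seminormder}, then integrate the pointwise bound $|\psi'(x)|\leq C(1+|x|)^{N-1}$ to obtain $|\psi(x)|\leq K(1+|x|)^{N}$. Your single application of the fundamental theorem of calculus is just a cleaner packaging of the paper's case-by-case integration over the two half-lines.
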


\begin{proof}
Since $||\psi||_{N,n}=||\psi'||_{N-1,n-1}$ for all $n \geq 1$, we need only verify that $||\psi||_{N,0}$ is finite.  To show this, we note that $\psi' \in \cM_{N-1}$ implies that $||\psi' ||_{(N-1,0)}$ is finite, and thus there exists some constant $C>0$ such that $|\psi'(x)| \leq C(1+|x|)^{N-1}$ for all $x \in \R$.  In particular, we have
\begin{subequations}
\begin{align}
\label{phi'bounda}
\psi'(x) \leq C(1+x)^{N-1} \qquad & \text{for $x>0$}\\
\label{phi'boundb}
\psi'(x) \geq -C(1+x)^{N-1} \qquad & \text{for $x>0$}\\
\label{phi'boundc}
\psi'(x) \leq C(1-x)^{N-1} \qquad & \text{for $x<0$}\\
\label{phi'boundd}
\psi'(x) \geq -C(1-x)^{N-1} \qquad & \text{for $x<0$}.
\end{align}
\end{subequations}
Upon integrating both sides of \eqref{phi'bounda} and \eqref{phi'boundb} from $x=0$ to $x=\infty$, and \eqref{phi'boundc} and \eqref{phi'boundd} from $x=-\infty$ to $x=0$ (and recalling that $N \geq 1$), it follows that there exists some $K$ such that $|\psi(x)| \leq K(1+|x|)^N$.  Thus $||\psi||_{N,0}$ is finite, completing the proof.
\end{proof}

Using the above lemma, we show that any $f \in \cM_N'$ with $\mu_0(f)=0$ has an antiderivative in $\cM'_{N-1}$.

\begin{corollary}
\label{distantider}
If $f \in \cM_N'$, $N \geq 1$, and $\mu_0(f)=0$, then there exists a unique $g \in \cM'_{N-1}$ with $g'=f$.
\end{corollary}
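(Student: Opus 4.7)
The plan is to define $g$ via the pairing $\langle g, \psi\rangle := -\langle f, \Psi\rangle$, where for each $\psi \in \cM_{N-1}$, $\Psi$ is the unique antiderivative with $\Psi(0) = 0$, i.e., $\Psi(x) = \int_0^x \psi(t)\,dt$. Since $\Psi'=\psi \in \cM_{N-1}$ and $N \geq 1$, Lemma \ref{testantider} gives $\Psi \in \cM_N$, so $\langle f, \Psi\rangle$ is well-defined. Linearity of $g$ follows from the linearity of $\psi \mapsto \Psi$ under this normalization; moreover, any other choice of antiderivative would differ from $\Psi$ by a constant $c$, but $\langle f, c\rangle = c\,\mu_0(f) = 0$, so the pairing is canonical. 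This is the first of two places where the hypothesis $\mu_0(f)=0$ enters essentially.

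For continuity of $g$ on $\cM_{N-1}$, I would translate $\cM_N$-seminorms on $\Psi$ into $\cM_{N-1}$-seminorms on $\psi$. Equation \eqref{seminormder} gives $||\Psi||_{N,n} = ||\psi||_{N-1,n-1}$ for every $n \geq 1$, so only the case $n=0$ requires work. This is a direct rerun of the integration argument in Lemma \ref{testantider}: from $|\psi(t)| \leq ||\psi||_{N-1,0}(1+|t|)^{N-1}$ one obtains $|\Psi(x)| \leq \frac{1}{N}||\psi||_{N-1,0}(1+|x|)^N$, hence $||\Psi||_{N,0} \leq \frac{1}{N}||\psi||_{N-1,0}$. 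Composing these bounds with the continuity of $f$ on $\cM_N$ (which controls $|\langle f, \Psi\rangle|$ by finitely many of the $||\Psi||_{N,n}$) yields the estimate certifying $g \in \cM'_{N-1}$.

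To verify $g' = f$, take any $\psi \in \cM_N$; the antiderivative of $\psi' \in \cM_{N-1}$ vanishing at the origin is $\psi - \psi(0)$. Therefore
\[
\langle g', \psi\rangle = -\langle g, \psi'\rangle = \langle f, \psi - \psi(0)\rangle = \langle f, \psi\rangle - \psi(0)\mu_0(f) = \langle f,\psi\rangle,
\]
again invoking $\mu_0(f)=0$. For uniqueness, suppose $g_1, g_2 \in \cM'_{N-1}$ both satisfy $g_i' = f$, and set $h := g_1 - g_2$, so $h'=0$. Every $\phi \in \cM_{N-1}$ admits a smooth antiderivative $\Phi(x)=\int_0^x \phi(t)\,dt$, which by Lemma \ref{testantider} lies in $\cM_N$; hence $\langle h, \phi\rangle = \langle h, \Phi'\rangle = -\langle h', \Phi\rangle = 0$, giving $h = 0$.

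The main technical point is the $n=0$ seminorm bound in the continuity step, and conceptually the role of the hypothesis $\mu_0(f) = 0$, which is exactly what removes the constant-of-integration ambiguity both when defining $g$ and when checking $g'=f$. Without it, the natural candidate antiderivative satisfies $g' = f - \mu_0(f)\delta_0$ rather than $g' = f$, so the zeroth-moment condition is both necessary and precisely the right hypothesis.
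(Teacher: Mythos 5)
Your proposal is correct and follows essentially the same route as the paper: define $g$ by pairing $f$ with an antiderivative (well-defined because $\mu_0(f)=0$), use Lemma \ref{testantider} together with the seminorm identity \eqref{seminormder} and the $n=0$ integration bound to get continuity on $\cM_{N-1}$. You additionally spell out the verification $g'=f$ and the uniqueness of $g$ (via surjectivity of differentiation $\cM_N\to\cM_{N-1}$), which the paper leaves implicit, but the underlying argument is the same.
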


\begin{proof}
For $\psi \in \cM_{N-1}$, define $\langle g, \psi \rangle = - \langle f, \psi \rangle$, where $\psi$ is an antiderivative of $\psi$.  The quantity $\langle f, \psi \rangle$ is well-defined since $\psi \in \cM_N$ by Lemma \ref{testantider}.  Since
\[
\langle f, \psi +C \rangle = \langle f, \psi \rangle + C \mu_0(f) = \langle f, \psi \rangle,
\]
the value of $\langle g, \psi \rangle$ does not depend on the choice of antiderivative. 

To show that $g$ is a continuous functional on $\cM_{N-1}$, consider a sequence $\{\psi_i \in \cM_{N-1}\}_{i\geq 1}$ converging to the zero function in the topology of $\cM_{N-1}$.  We define a corresponding sequence $\{\psi_i \in \cM_N \}_{i\geq 1}$ by
\[
\psi_i(x) = \int_{-\infty}^x \psi_i(y) \; dy \qquad \text{for each $i$.}
\]

We claim that $\{\psi_i \}_{i \geq 1}$ converges as $i \to \infty$ to the zero function in the topology of $\cM_N$.  Indeed, for $n \geq 1$ we have from \eqref{seminormder} that 
\[
\lim_{i \to \infty} || \psi_i ||_{N,n} = \lim_{i \to \infty}|| \psi_i ||_{N-1,n-1} = 0.
\]
It therefore only remains to show that $\lim_{i \to \infty} || \psi_i ||_{N,0} = 0$.  This can be shown by observing that, since $\lim_{i \to \infty} || \psi_i ||_{N-1,0} = 0$, there is a sequence of positive numbers $\{C_i\}_{i \geq 1}$, $C_i \to 0$, with $\psi_i(x)$ bounded in absolute value by $C_i(1+|x|)^{N-1}$.  Integrating separately over the domains $(-\infty,0]$ and $[0,\infty)$ as in the proof of Lemma \ref{testantider}, it follows that there is a sequence of positive numbers $\{ K_i \}_{i \geq 1}$, $K_i \to 0$, such that $\psi_i(x)$ is bounded in absolute value by $K_i(1+|x|)^N$.  This proves that $\lim_{i \to \infty} || \psi_i ||_{N,0} = 0$ and thereby verifies the claim that $\{ \psi_i \}_{i \geq 1}$ converges to the zero function in the topology of $\cM_N$.

The continuity of $g$ as a functional on $\cM_{N-1}$ now follows from its definition and the continuity of $f$:
\[
\lim_{i \to \infty} \langle g, \psi_i \rangle = - \lim_{i \to \infty} \langle f, \psi_i \rangle = 0.
\]
We conclude that $g$ is unique and well-defined as an element of $\cM_{N-1}'$.
\end{proof}

Iterating Corollary \ref{distantider}, a distribution in $\cM_N'$ whose first $m$ moments vanish has a unique $m$th antiderivative in $\cM'_{N-m}$:

\begin{corollary}
\label{mthantider}
Consider $f \in \cM_N'$, $N \geq 1$ such that $\mu_0(f)=\ldots=\mu_{m-1}(f)=0$, for some positive integer $m \leq N$.  Then there exists a unique $g \in \cM'_{N-m}$ with $g^{(m)}=f$.  
\end{corollary}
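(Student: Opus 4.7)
The plan is a straightforward induction on $m$, iterating the $m=1$ result of Corollary \ref{distantider}. The base case $m=1$ \emph{is} Corollary \ref{distantider}. For the inductive step, assume the statement for $m-1$ and take $f \in \cM_N'$ with $\mu_0(f)=\cdots=\mu_{m-1}(f)=0$ and $1 \leq m \leq N$. Since $\mu_0(f)=0$, Corollary \ref{distantider} produces a unique $h \in \cM'_{N-1}$ with $h'=f$. The key step is to verify that the first $m-1$ moments of $h$ vanish, so that the inductive hypothesis applies to $h$ in $\cM'_{N-1}$ (valid since $N-1 \geq m-1$).

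The moment check is just integration by parts against a monomial. The monomial $x^k$ lies in $\cM_N$ whenever $0 \leq k \leq N$, since $(x^k)^{(n)}$ is either a scalar multiple of $x^{k-n}$ or zero, and hence bounded by a constant times $(1+|x|)^{N-n}$. Therefore for $1 \leq k \leq m-1$ we may evaluate, using the definition \eqref{distder} of $h'$ on the test function $x^k \in \cM_N$,
\[
0 = \mu_k(f) = \langle h', x^k\rangle = -\langle h, (x^k)'\rangle = -k\,\mu_{k-1}(h),
\]
so $\mu_{k-1}(h)=0$ for $1 \leq k \leq m-1$, as required. By the inductive hypothesis there is then a unique $g \in \cM'_{(N-1)-(m-1)} = \cM'_{N-m}$ with $g^{(m-1)}=h$, and $g^{(m)} = h' = f$ gives the desired $m$th antiderivative. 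Uniqueness of $g$ propagates: $h$ is determined by $f$ via Corollary \ref{distantider}, and $g$ is determined by $h$ by induction.

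I do not foresee a significant obstacle; the entire content is the biorthogonality-style identity $\mu_k(f) = -k\,\mu_{k-1}(h)$ which transports the vanishing-moment hypothesis down one order with each antidifferentiation. The only point to watch is that at every step the monomial used as a test function must lie in the appropriate $\cM_\bullet$ space, but the inequality $k \leq m-1 \leq N-1$ is preserved throughout, so this is automatic.
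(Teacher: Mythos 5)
Your proof is correct and takes essentially the same approach as the paper: an induction on $m$ iterating Corollary \ref{distantider}, with the vanishing-moment hypothesis transported via integration by parts against monomials $x^k \in \cM_N$. The only (inessential) difference is the order of operations --- you apply the one-step antiderivative to $f$ first and then the inductive hypothesis to $h$, whereas the paper applies the inductive hypothesis to $f$ first and then antidifferentiates once more, checking only $\mu_0(h)=0$.
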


\begin{proof}
We proceed by induction on $m$, with Corollary \ref{distantider} serving as a base ($m=1$) case.  Suppose the claim holds in the case $m=m^*$ for some positive integer $m^*\leq N-1$;  we will prove it for $m=m^*+1$. Consider $f \in \cM_N'$ with $\mu_0(f)=\ldots=\mu_{m^*}(f)=0$.  By the inductive hypothesis, there exists a unique $h \in \cM_{N-m^*}'$ with $h^{(m^*)}=f$.  Iteratively applying \eqref{distder} $m^*$ times, we have that
\begin{align*}
0 & = \mu_{m^*}(f)\\
& = \left \langle h^{(m^*)}, x^{m^*} \right \rangle\\
& = (-1)^{m^*} (m^*)! \; \langle h, 1\rangle\\
& = (-1)^{m^*} (m^*)! \; \mu_0(h).
\end{align*}
Thus $\mu_0(h)=0$, which allows us to apply Corollary \ref{distantider} to $h$. We obtain that there exists a unique $g \in \cM'_{N-m^*-1}$ with $g'=h$.  Taking $m^*$ derivatives of both sides yields $g^{(m^*+1)} = f$, completing the induction step.
\end{proof}

\subsection{Asymptotic moment expansion}

Here introduce the asymptotic moment expansion for distributions with finitely many moments.  A distribution $f \in \cM_N'$ has an asymptotic moment expansion to order $N-1$ in the moments of $f$, convolutions of which  converge locally uniformly, as we show in the an analogue of Theorem \ref{thm:momentcontinuity}:

\begin{finitethm}
For all integers $0 \leq M \leq N-1$ and $f \in \cM_N',\psi \in \cM_N$, the $\sigma$-indexed family of functions 
\begin{equation*}
w \longmapsto \sigma^{M+1} \left( f * \psi_\sigma (\sigma w)
- \sum_{n=0}^{M} \frac{(-1)^n}{n!} \; \mu_n \; \sigma^{-n-1} \; \psi^{(n)} (w) \right),
\end{equation*}
converges locally uniformly (in $w$) to the zero function (of $w$) as $\sigma \rightarrow \infty$.
\end{finitethm}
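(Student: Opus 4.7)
My plan is to mirror the argument used for Theorem \ref{thm:momentcontinuity}, with Lemma \ref{rholemma} replaced by an analogous statement adapted to the polynomial-weighted topology of $\cM_N$. Writing
\[
f*\psi_\sigma(\sigma w)=\sigma^{-1}\bigl\langle f(y),\psi(w-y/\sigma)\bigr\rangle
\]
and Taylor-expanding in $y$ about $0$ to order $M$ gives $\psi(w-y/\sigma)=P_M(w,y/\sigma)+\rho_{M,w}(y/\sigma)$ where $P_M(w,z)=\sum_{n=0}^M(-1)^n z^n\psi^{(n)}(w)/n!$. Pairing $P_M$ against $f$ is legitimate because $y^n\in\cM_N$ for $n\le N$, and by the biorthogonality \eqref{biorthogonal} it produces exactly the partial sum appearing in the theorem. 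Consequently the statement reduces to showing
\[
\sigma^{M}\bigl\langle f(y),\rho_{M,w}(y/\sigma)\bigr\rangle\;\longrightarrow\;0\qquad(\sigma\to\infty)
\]
locally uniformly in $w$.

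Because $\cM_N$ is Fréchet with topology generated by the countable family $\{||\cdot||_{N,n}\}_{n\ge 0}$, continuity of $f\in\cM_N'$ supplies finitely many indices $n_1,\ldots,n_k$ and a constant $C_f$ with $|\langle f,\phi\rangle|\le C_f\max_i||\phi||_{N,n_i}$ for every $\phi\in\cM_N$. It therefore suffices to prove the $\cM_N$-analog of Lemma \ref{rholemma}: for each fixed $n\ge 0$ and each compact $K\subset\R$, $\sigma^M\,||\rho_{M,w}(\cdot/\sigma)||_{N,n}$ tends to $0$ uniformly in $w\in K$. Unpacking the seminorm and substituting $z=y/\sigma$ gives
\[
||\rho_{M,w}(\cdot/\sigma)||_{N,n}=\sigma^{-n}\sup_{z\in\R}(1+\sigma|z|)^{-(N-n)}\bigl|\rho_{M,w}^{(n)}(z)\bigr|,
\]
and I would then split the supremum at $|z|=1$.

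For $|z|\le 1$, I would use the integral form of Taylor's remainder together with $\rho_{M,w}^{(M+1)}(z)=(-1)^{M+1}\psi^{(M+1)}(w-z)$ to obtain $|\rho_{M,w}^{(n)}(z)|\le C_K|z|^{M+1-n}$, where $C_K$ depends only on $\psi$ and $K$; substituting $u=\sigma|z|$ and using $M+1-n\le N-n$ bounds the resulting contribution by $C_K'\sigma^{-1}$. For $|z|>1$, the $\cM_N$-growth bound $|\psi^{(n)}(x)|\le ||\psi||_{N,n}(1+|x|)^{N-n}$ combined with the polynomial character of $P_M$ gives $|\rho_{M,w}^{(n)}(z)|\le C_K(1+|z|)^{N-n}$ uniformly in $w\in K$; the ratio $(1+|z|)^{N-n}(1+\sigma|z|)^{-(N-n)}$ is then bounded by $(2/\sigma)^{N-n}$, yielding a contribution of order $\sigma^{M-N}=o(1)$ since $M\le N-1$. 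Combining the two regimes proves the key estimate and hence the theorem.

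The main obstacle is exactly the $\cM_N$-version of Lemma \ref{rholemma}: the exponential weights in the original lemma absorb all polynomial growth, whereas here the weight $(1+|y|)^{-(N-n)}$ is sharp, so the decay at infinity and the Taylor vanishing at the origin must be matched much more carefully. The hypothesis $M\le N-1$ enters precisely at the large-$|z|$ estimate, where $M=N$ would make the exponent of $\sigma$ fail to be negative; this is why the asymptotic expansion can only be pushed to order $N-1$ when $f$ has only $N$ moments.
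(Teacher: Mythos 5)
Your proposal is correct in outline and reaches the theorem by a genuinely different route at the key step. The paper reduces the statement, exactly as you do, to showing that $\sigma^{M}\langle f,\rho_{M,w}(\cdot/\sigma)\rangle\to 0$ locally uniformly for the Taylor remainder $\rho_{M,w}(y)=\psi(w-y)-P_M(w,y)$ (its proof of Theorem \ref{finitemomentexp} is declared to ``follow exactly'' that of Theorem \ref{thm:momentcontinuity}), but it then proves an abstract $\cM_N$-analogue of Lemma \ref{rholemma}, valid for \emph{any} smooth $\rho$ with $\rho_w\in\cM_N$, locally uniformly bounded seminorms, and Taylor vanishing to order $M$ at $y=0$; the lemma is established by a compactness/contradiction argument (sequences $w_j\to w'$, $\sigma_j\to\infty$, near-maximizing points $y_j$, and a case split according to whether $y_j/\sigma_j\to 0$ with $n\le M$ or $n>M$, or stays bounded away from $0$). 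You instead exploit the specific remainder: the identity $\rho_{M,w}^{(M+1)}(z)=(-1)^{M+1}\psi^{(M+1)}(w-z)$ with the integral form of Taylor's theorem on $|z|\le 1$, and the $\cM_N$ growth bounds on $|z|>1$, after splitting the weighted supremum at $|z|=1$. This direct estimate is more quantitative than the paper's argument — it exhibits an explicit rate, $O(\sigma^{-1})+O(\sigma^{M-N})$ — and your localization of where $M\le N-1$ is needed (the outer region) matches exactly where the paper uses $\sigma_j^{M-N}\to 0$ in its Case 3. The paper's lemma is more general (arbitrary $\rho$), which is why it is packaged as a separate statement; your argument proves precisely the instance needed.

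Two small repairs are needed in your estimates, since continuity of $f$ can involve seminorm indices $n$ larger than $M+1$ or even $N$. Your inner-region bound $|\rho_{M,w}^{(n)}(z)|\le C_K|z|^{M+1-n}$ is only meaningful for $n\le M+1$; for $n>M+1$ one simply has $\rho_{M,w}^{(n)}(z)=(-1)^{n}\psi^{(n)}(w-z)$, bounded for $|z|\le 1$ and $w\in K$, and the prefactor $\sigma^{M-n}$ (times at worst $(1+\sigma)^{n-N}$ when $n>N$) still vanishes because $M\le N-1$. Likewise, on $|z|>1$ the ratio bound $(2/\sigma)^{N-n}$ is valid only for $n\le N$; for $n>N$ the supremum of $\bigl((1+\sigma|z|)/(1+|z|)\bigr)^{n-N}$ is of order $\sigma^{n-N}$, and the contribution is again $O(\sigma^{M-N})$. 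Neither point affects the conclusion or the structure of your proof.
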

Once the following analogue of Lemma \ref{rholemma} is proved, the proof of Theorem \ref{finitemomentexp} follows exactly the proof of Theorem \ref{thm:momentcontinuity} (in the case $d=1$, which is the only case we consider here).

\begin{lemma}
Let $\rho(w,y)$ be a smooth function and, for fixed $w \in \R$, define $\rho_w$ by $\rho_w(y)=\rho(w,y)$.  Suppose that
\renewcommand{\labelenumi}{(\alph{enumi})}
\begin{enumerate}
\item $\rho_w \in \cM_N$ for some fixed $N \geq 1$,
\item For each $n \geq 0$, $||\rho_w||_{N,n}$ is locally uniformly bounded in $w$, and
\item There is some integer $M$, $0 \leq M \leq N-1$, such that for each $w \in \R$ and $0 \leq m \leq M$,
\[
\frac{d^m }{d y^m} \rho_w(0)= 0.
\]
\end{enumerate}
Then for any continuous seminorm $||\quad ||$ on $\cM_N$, the $\sigma$-indexed family of functions 
\[
w \longmapsto \sigma^M ||\rho_w( \placehold /\sigma)|| 
\]
converges locally uniformly (in $w$) to the zero function (of $w$) as $\sigma \rightarrow \infty$.
\end{lemma}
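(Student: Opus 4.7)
The plan is to mirror the proof of Lemma \ref{rholemma}, with polynomial weights $(1+|y|)^{-(N-n)}$ replacing the exponential weights $e^{-\gamma|y|}$. Since every continuous seminorm on $\cM_N$ is dominated by a finite linear combination of the generating seminorms $\|\cdot\|_{N,n}$, it suffices to establish the claim for each $\|\cdot\|_{N,n}$ separately.

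For the base case $n=0$ I would prove the stronger statement that $\sigma^{M+1}\|\rho_w(\placehold/\sigma)\|_{N,0}$ is locally uniformly bounded in $w$. Assume the contrary and pick a compact $K\subset\R$ together with sequences $w_j\in K$, $\sigma_j\to\infty$, and $y_j\in\R$ realizing the supremum in the seminorm up to a factor of two, so that $\sigma_j^{M+1}(1+|y_j|)^{-N}|\rho_{w_j}(y_j/\sigma_j)|\to\infty$. (If instead the supremum is only approached as $|y|\to\infty$, condition (b) gives $\|\rho_{w_j}(\placehold/\sigma_j)\|_{N,0}\leq C_K\sigma_j^{-N}$, which already contradicts the assumption since $M+1\leq N$.) Passing to subsequences, $w_j\to w^*\in K$, and $y_j/\sigma_j$ either tends to $0$, stays in a bounded annulus, or tends to $\infty$. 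In the first regime, the Taylor formula
\begin{equation*}
\rho_{w_j}(y_j/\sigma_j)=\frac{1}{M!}\int_0^{y_j/\sigma_j}(y_j/\sigma_j-t)^M\rho_{w_j}^{(M+1)}(t)\,dt
\end{equation*}
combined with the bound $|\rho_{w_j}^{(M+1)}(t)|\leq 2^{N-M-1}\|\rho_{w_j}\|_{N,M+1}$ valid on $|t|\leq 1$ (from condition (b)) yields $|\rho_{w_j}(y_j/\sigma_j)|\leq C_K|y_j/\sigma_j|^{M+1}$, whence $\sigma_j^{M+1}(1+|y_j|)^{-N}|\rho_{w_j}(y_j/\sigma_j)|\leq C_K|y_j|^{M+1}/(1+|y_j|)^N\leq C_K$, a contradiction. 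In the remaining regimes $|y_j|\to\infty$, and I use either joint continuity of $\rho$ (bounded-annulus case) or the bound $|\rho_{w_j}(y_j/\sigma_j)|\leq C_K(1+|y_j/\sigma_j|)^N$ together with $(1+|y_j/\sigma_j|)/(1+|y_j|)\leq 2/\sigma_j$ for large $|y_j|$ (escaping case), obtaining in both subcases a bound of order $\sigma_j^{M+1-N}$, which is bounded because $M\leq N-1$.

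For $n\geq 1$ I reduce to the base case via the identity $\|\rho_w(\placehold/\sigma)\|_{N,n}=\sigma^{-n}\|\rho_w^{(n)}(\placehold/\sigma)\|_{N-n,0}$ valid for $n\leq N$. When $n\leq M$, the function $\rho_w^{(n)}$ lies in $\cM_{N-n}$ and inherits conditions (a)--(c) with parameters $(N-n,M-n)$ (the vanishing-derivative hypothesis through order $M-n$ coming from (c) for $\rho_w$), so the base case applied to $\rho_w^{(n)}$ gives $\sigma^M\|\rho_w(\placehold/\sigma)\|_{N,n}\to 0$ locally uniformly. When $M<n\leq N$, condition (b) alone yields $\|\rho_w^{(n)}(\placehold/\sigma)\|_{N-n,0}\leq C_K$ and the factor $\sigma^{M-n}$ is itself vanishing. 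For $n>N$, a direct estimate using condition (b) produces $\|\rho_w(\placehold/\sigma)\|_{N,n}=O(\sigma^{-N})$, again vanishing after multiplication by $\sigma^M$.

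The main obstacle, and the place where the $\cM_N$ proof genuinely departs from the $\cP$ proof, is controlling the seminorm at spatial infinity: the polynomial weight $(1+|y|)^{-N}$ does not force the supremum to be attained, so one must separately treat the scenario in which it is only approached as $|y|\to\infty$. This is precisely the point at which the strict inequality $M\leq N-1$ is used and tight, matching the natural restriction that an asymptotic moment expansion in $\cM_N'$ only runs to order $N-1$.
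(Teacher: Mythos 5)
Your proof is correct, but it is organized differently from the paper's. The paper proves the lemma for each generating seminorm $||\cdot||_{N,n}$ directly, with $n$ arbitrary: it assumes $\lim_j \sigma_j^M||\rho_{w_j}(\placehold/\sigma_j)||_{N,n}>0$, extracts near-maximizing points $y_j$, and runs a single contradiction argument whose three cases are keyed jointly to the regime of $y_j/\sigma_j$ and to whether $n\leq M$ or $n>M$, using hypothesis (c) together with smoothness of $\rho$ only qualitatively (no Taylor remainder appears) and hypothesis (b) plus $M<N$ when $y_j/\sigma_j$ stays away from $0$. You instead mirror Lemma \ref{rholemma}: you reduce every seminorm to the zeroth-order one via $||\rho_w(\placehold/\sigma)||_{N,n}=\sigma^{-n}||\rho_w^{(n)}(\placehold/\sigma)||_{N-n,0}$ (the finite-moment analogue of \eqref{seminormder}), and for $n=0$ you prove the stronger statement that $\sigma^{M+1}||\rho_w(\placehold/\sigma)||_{N,0}$ is locally uniformly bounded, using a quantitative integral-form Taylor estimate near the origin (conditions (b) and (c)) and the elementary bound $(1+|y/\sigma|)/(1+|y|)\leq 2/\sigma$ at infinity. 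The price of this reduction is the extra bookkeeping for derivative orders $M<n\leq N$ and $n>N$, where $\rho_w^{(n)}$ has no vanishing derivatives at the origin left to exploit and the weight exponent changes sign; your direct $O(\sigma^{M-n})$ and $O(\sigma^{M-N})$ estimates there are sound, and they, like the paper's third case, are precisely where $M\leq N-1$ enters. What your organization buys is a marginally stronger conclusion ($O(\sigma^{-M-1})$ rather than $o(\sigma^{-M})$ for the generating seminorms) and a proof running in strict parallel with the exponential-weight Lemma \ref{rholemma}; the paper's version avoids Taylor's theorem and the separate treatment of large $n$ at the cost of a more intricate single case analysis.
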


(We recall that the symbol $\placehold$ represents function or distribution arguments with regard to the bracket and seminorm operations.)

\begin{proof}
Suppose the conclusion is false for the seminorm $|| \quad ||_{N,n}$.  Then there is a compact neighborhood $K \subset \R$ and a pair of sequences $\{w_j \in K\}_{j \geq 0}$, $\{\sigma_j \in \R\}_{j \geq 0}$, with $\sigma_j \to \infty$, such that 
\[
0 < \lim_{j \rightarrow \infty} \sigma_j^M \left| \left|\rho_{w_j} (\placehold /\sigma_j)\right | \right|_{N,n} =
\lim_{j \rightarrow \infty} \sigma_j^{M-n} \, \sup_{y \in \R} \, (1+|y|)^{-(N-n)} \, |\rho_{w_j}^{(n)} (y/\sigma_j)|.
\]
The second equality above uses 
\[
\frac{d^n}{dy^n} \big(  \rho_{w_j}(y/\sigma_j) \big) = \sigma_j^{-n} \rho_{w_j}^{(n)} (y/\sigma_j).
\]
By passing to a subsequence if necessary, we may assume $\{w_j\}$ converges to a $w' \in K$.  Since for  fixed $w_j$, values of $y$ can be chosen to make the quantity
\[
(1+|y|)^{-(N-n)} \, |\rho_{w_j}^{(n)} (y/\sigma_j)|
\]
arbitrary close to its supremum over $y \in \R$, there is a sequence $\{y_j\}_{j \geq 0}$ such that
\begin{equation}
\label{rholambdaw}
\lim_{j \rightarrow \infty} \sigma_j^{M-n} \,  (1+|y_j|)^{-(N-n)} \, |\rho_{w_j}^{(n)} (y_j/\sigma_j)| >0.
\end{equation}

Passing to further subsequences if necessary,
we may assume that $\{y_j/\sigma_j\}$ either converges to 0 or is bounded away from 0 in absolute value as $j \to \infty$.

\paragraph*{Case 1: $\lim_{j \to \infty} y_j/\sigma_j = 0$ and $n \leq M$.}  In this case we rewrite \eqref{rholambdaw} as
\begin{equation}
\label{rholambdaw1}
\lim_{j \rightarrow \infty} \left( \frac{|y_j|^{M-n}}{(1+|y_j|)^{N-n}} \right) 
\Big( |y_j/\sigma_j|^{-(M-n)} \, |\rho_{w_j}^{(n)}(y_j/\sigma_j)| \Big) >0.
\end{equation}
Above, the first parenthesized quantity $\frac{|y_j|^{M-n}}{(1+|y_j|)^{N-n}}$ is bounded above by 1 for all $j$ since $0 \leq M-n<N-n$.  For the second parenthesized quantity, we have that
\[
\lim_{j \rightarrow \infty} |y_j/\sigma_j|^{-(M-n)} \, |\rho_{w'}^{(n)}(y_j/\sigma_j)|=0,
\]
by condition (c) of the statement of the lemma, so 
\[
\lim_{j \rightarrow \infty} |y_j/\sigma_j|^{-(M-n)} \, |\rho_{w_j}^{(n)}(y_j/\sigma_j)|=0,
\]
by the smoothness of $\rho$ in both arguments.  Thus
\[
\lim_{j \rightarrow \infty} \left( \frac{|y_j|^{M-n}}{(1+|y_j|)^{N-n}} \right) 
\Big( |y_j/\sigma_j|^{-(M-n)} \, |\rho_{w_j}^{(n)}(y_j/\sigma_j)| \Big) =0,
\]
contradicting \eqref{rholambdaw1}.

\paragraph*{Case 2: $\lim_{j \to \infty} y_j/\sigma_j = 0$ and $n > M$.}  In this case we rewrite \eqref{rholambdaw} as
\begin{equation}
\label{rholambdaw2}
\lim_{j \rightarrow \infty}  \frac{(\sigma_j^{-1} + |y_j/\sigma_j|)^{n-M}}{(1+|y_j|)^{N-M}} \;
|\rho_{w_j}^{(n)}(y_j/\sigma_j)| > 0.
\end{equation}
The quantity
\[
\frac{(\sigma_j^{-1} + |y_j/\sigma_j|)^{n-M}}{(1+|y_j|)^{N-M}}
\]
converges to 0 as $j \to \infty$ since $\sigma_j^{-1}$ and $y_j/\sigma_j$ both converge to 0, and $n-M$ and $N-M$ are both positive in this case.  On the other hand, 
\[
\lim_{j \to \infty} |\rho_{w_j}^{(n)}(y_j/\sigma_j)| = |\rho_{w'}^{(n)}(0)|,
\]
by the smoothness of $\rho$.  Thus
\[
\lim_{j \rightarrow \infty} \frac{(\sigma_j^{-1} + |y_j/\sigma_j|)^{n-M}}{(1+|y_j|)^{N-M}} |\rho_{w_j}^{(n)}(y_j/\sigma_j)| = 0,
\]
contradicting \eqref{rholambdaw2}.

\paragraph*{Case 3: $|y_j/\sigma_j| > B$ for some $B>0$ and all $j \geq 0$.}  In this case, we rewrite \eqref{rholambdaw} as 
\begin{equation}
\label{rholambdaw3}
\lim_{j \rightarrow \infty} \left( \frac{\sigma_j^{M-n}(1+|y_j/\sigma_j|)^{N-n}}{(1+|y_j|)^{N-n}} \right)
\Big( (1+|y_j/\sigma_j|)^{-(N-n)} |\rho_{w_j}^{(n)}(y_j/\sigma_j)| \Big) >0.
\end{equation}
The second parenthesized quantity in \eqref{rholambdaw3},
\[
 (1+|y_j/\sigma_j|)^{-(N-n)} |\rho_{w_j}^{(n)}(y_j/\sigma_j)|,
\]
is positive, less than or equal to $|| \rho_{w_j}||_{N,n}$ by this norm's definition, and therefore bounded in $j$ since $|| \rho_w||_{N,n}$ is locally uniformly bounded in $w$.  As for the first parenthesized quantity, since $|y_j/\sigma_j|>B$ implies $\lim_{j \to \infty} |y_j|=\infty$, 
\begin{align*}
\lim_{j \rightarrow \infty}  \frac{\sigma_j^{M-n}(1+|y_j/\sigma_j|)^{N-n}}{(1+|y_j|)^{N-n}}
& = \lim_{j \rightarrow \infty}  \frac{\sigma_j^{M-n}(1+|y_j/\sigma_j|)^{N-n}}{|y_j|^{N-n}}\\
& = \lim_{j \rightarrow \infty} \sigma_j^{M-N} ( |y_j/\sigma_j|^{-1}+1)^{N-n}\\
& = 0.
\end{align*}
The last equality follows from the facts that $|y_j/\sigma_j|^{-1} < B^{-1}$ for all $j$, and $\sigma_j^{M-N} \to 0$ since $M<N$.  We conclude 
\[
\lim_{j \rightarrow \infty} \left( \frac{\sigma_j^{M-n}(1+|y_j/\sigma_j|)^{N-n}}{(1+|y_j|)^{N-n}} \right)
\Big( (1+|y_j/\sigma_j|)^{-(N-n)} |\rho_{w_j}^{(n)}(y_j/\sigma_j)| \Big) =0,
\]
contradicting \eqref{rholambdaw3}.

We have shown that the $\sigma$-indexed family of functions 
\[
w \longmapsto \sigma^M ||\rho_w(\placehold /\sigma)||_{N,n}
\]
converges locally uniformly (in $w$) to the zero function of $w$ as $\sigma \rightarrow \infty$ for each $n \geq 0$.  Since the family of seminorms 
$|| \; ||_{N,n}$ generates the topology on $\cM_N$, the result is true for any continuous seminorm.
\end{proof}

For the Gaussian wavelet $\psi=G$ we have:
\begin{corollary}
\label{Gaussianfinitemoment}
For all $f \in \cM_N'$, $N \geq 0$, and all $M$, $0 \leq M \leq N-1$, the family of functions 
\[
w \mapsto \sigma^{M+1} 
\left( f*G_\sigma (\sigma w) - \sum_{n=0}^{M} \frac{\mu_n}{n!} \sigma^{-n-1} H_n(w) G(w) \right)
\]
converges locally uniformly to the zero function of $w$ as $\sigma \to \infty$.
\end{corollary}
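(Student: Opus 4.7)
The plan is to deduce the corollary directly from Theorem \ref{finitemomentexp} by taking $\psi = G$ and then rewriting $G^{(n)}$ using Hermite polynomials. First I would verify that $G \in \cM_N$ for every $N \geq 0$, which is needed to legitimately plug $G$ into Theorem \ref{finitemomentexp}. Since $G^{(n)}(x)$ is (up to sign) a polynomial of degree $n$ times the Gaussian $G(x)$, it decays faster than any polynomial, so the seminorm
\[
\|G\|_{N,n} = \sup_{x \in \R}(1+|x|)^{-(N-n)}|G^{(n)}(x)|
\]
is finite whether $N-n$ is positive, zero, or negative. Hence $G \in \cM_N$ for every $N$.

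With this in hand, Theorem \ref{finitemomentexp} applied to $\psi = G$ immediately yields the locally uniform convergence
\[
\sigma^{M+1}\left(f*G_\sigma(\sigma w) - \sum_{n=0}^{M}\frac{(-1)^{n}}{n!}\mu_n\,\sigma^{-n-1}G^{(n)}(w)\right) \longrightarrow 0
\]
as $\sigma \to \infty$ for any $f \in \cM_N'$ and any $0 \leq M \leq N-1$. Substituting the standard Hermite identity $G^{(n)}(w) = (-1)^n H_n(w) G(w)$ (the Rodrigues-type formula underlying \eqref{Hermite}) into the sum gives
\[
\sum_{n=0}^{M} \frac{(-1)^n}{n!}\mu_n\,\sigma^{-n-1}G^{(n)}(w) = \sum_{n=0}^{M} \frac{\mu_n}{n!}\,\sigma^{-n-1}H_n(w)G(w),
\]
since the two factors of $(-1)^n$ cancel. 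This matches the sum appearing in the corollary.

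There is no real obstacle: the content of the corollary is purely a specialization of Theorem \ref{finitemomentexp} to the Gaussian, combined with the classical identification of $G^{(n)}$ with a Hermite polynomial times $G$. The only step that requires any care is checking that $G$ actually belongs to $\cM_N$ for every $N$, and this is immediate from the fact that Gaussian decay dominates any polynomial weight $(1+|x|)^{n-N}$.
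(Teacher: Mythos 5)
Your proposal is correct and follows exactly the route the paper intends: the corollary is the specialization of Theorem \ref{finitemomentexp} to $\psi = G$ (which lies in $\cM_N$ for every $N$ by Gaussian decay), followed by the Rodrigues-type identity $G^{(n)}(w) = (-1)^n H_n(w)G(w)$, whose sign cancels the $(-1)^n$ in the moment expansion. Your sign bookkeeping and the membership check $G \in \cM_N$ are both right, so nothing is missing.
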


\section{Necessity of Strong Decay}
\label{necessity}

Corollary \ref{cor:OneGaussian}(a) states that a real-valued function with exponential decay is uniquely determined by its Gaussian edges at any sequence of scales not converging to zero.  On the other hand, Meyer's counterexample \cite{Meyer} shows that such unique determination fails for non-decaying functions.  This raises the question of the requirements on a function $f$ for it to be uniquely determined by a sequence of its Gaussian edges.  One might conjecture that unique determination can be extended to all functions vanishing at infinity.

Here we prove Corollary \ref{cor:OneGaussian}(c), showing that the above conjecture is false.  The proof will proceed by constructing a sequence of pairs of distributions, with an arbitrarily large fixed number of moments, whose limits have Gaussian edges coinciding on an infinite sequence of scales tending to infinity.  Thus the unique determination result does not, in general, extend to functions with algebraic decay, leaving open only classes of functions with decay rates between exponential and algebraic, e.g.~classes decaying as the 
log-normal function $f(x)=\frac{1}{x} e^{-(\ln|x|)^2}$ or faster.

Let $N$ be a positive multiple of 4, and consider a positive symmetric function $h \in L^1(\R)$ satisfying the following conditions:
\renewcommand{\labelenumi}{(\roman{enumi})}
\begin{enumerate}
\item For all $\phi \in \cM_{N-1},$ 
\begin{equation}
\label{hfinitemoment}
\int_\R |\phi(x)| h(x) \; dx < \infty.
\end{equation}
(Thus $h$ can be regarded as an element of $\cM_{N-1}'$.)
\item $h$ has infinite $N$th moment: $\displaystyle \int_\R x^N h(x) \; dx = \infty.$
\item $h$ has second moment $<2$: $\displaystyle \int_\R x^2 h(x) \; dx < 2.$
\end{enumerate}
(We note that the third condition can always be arranged by multiplying $h$ by an appropriate constant.)

Starting with any such $h$ we will construct a pair of distributions $f, g \in \cM_{N-3}'$.  We will show that $f$ and $g$ have exactly two persistent edge contours each, which are symmetric in the coordinate $w=x/\sigma$.  We will further show that there is
a sequence of pairs $\{(w_i, \sigma_i)\}_{i \geq 1}$, with $w_i, \sigma_i>0$ and $\{\sigma_i\}$ increasing, such that
\begin{equation}
\label{interweave1}
\begin{split}
  \Delta \big( f * G_{\sigma_i} \big) \left(\sigma_i w_i \right) \geq 0 \geq   \Delta \big( g * G_{\sigma_i} \big) \left(\sigma_i w_i \right) & \quad 
\text{for $i$ odd}\\
  \Delta \big ( f * G_{\sigma_i} \big) \left(\sigma_i w_i \right) \leq 0 \leq   \Delta \big( g * G_{\sigma_i} \big) \left(\sigma_i w_i \right) & \quad 
\text{for $i$ even.}
\end{split}
\end{equation}
These statements together imply that edge contours of $f$ and $g$ intersect on a sequence of scales tending to infinity.
Finally, to obtain a violation of Marr's conjecture, we replace the distributions $f, g \in \cM_{N-3}$ by the integrable functions $f*G$ and $g*G$, whose edge contours are the same as those of $f$ and $g$, but shifted by one unit in $\sigma$.

The argument consists of two parts.  The first constructs of $f$ and $g$, demonstrates the existence of two persistent, symmetric edge contours, and verifies \eqref{interweave1}.  The second part shows that $f$ and $g$ have no other persistent edge contours.  Condition (iii) above will only be invoked in the second part.

\subsection{Part 1: Construction of $f$ and $g$}
\label{Part1}

We first construct $f$, $g$, and $\{(w_i, \sigma_i)\}_{i \geq 1}$ inductively, similarly to the argument of Section \ref{finiteedges}.  At each step $k$ of the induction we will construct a pair of distributions $f_{k+1}, g_{k+1} \in \cM_{N-3}'$ and pairs $(w_{2k}, \sigma_{2k})$ and $(w_{2k+1}, \sigma_{2k+1})$ such that \eqref{interweave1} holds for $1 \leq i \leq 2k+1$, with $f_{k+1}$ and $g_{k+1}$ in place of $f$ and $g$.  After the induction, we will take the limits of $f_k$ and $g_k$ (as $k \to \infty$) to obtain $f$ and $g$.

As the base step, we construct distributions $f_1,g_1 \in \cM'_{N-3}$.  For arbitrary real numbers $d_1>c_1>0$, let $C_1,D_1 \subset \R$ be the intervals $[-c_1,c_1]$ and $[-d_1,d_1]$, respectively.  We define $f_1$ and $g_1$ by specifying their second derivatives $\Delta f_1, \Delta g_1 \in \cM_{N-1}'$:
\begin{equation}
\label{deltafg1}
\begin{split}
\Delta f_1 & = \delta^{(2)} +  \chi_{C_1} h 
- \sum_{\substack{\text{$m$ even}\\ 0 \leq m \leq N-2}} a_{1,m} \delta^{(m)}\\
\Delta g_1 & = \delta^{(2)} +  \chi_{D_1} h  
- \sum_{\substack{\text{$m$ even}\\ 0 \leq m \leq N-2}}  b_{1,m} \delta^{(m)}.
\end{split}
\end{equation}
Here, $\chi_U$ denotes the characteristic function $U \subset \R$, with value 1 on $U$ and zero elsewhere. The coefficients $a_{1,m}$ and $b_{1,m}$ in \eqref{deltafg1}, for $m$ even and $0 \leq m \leq N-2$, are set as
\begin{align*}
a_{1,m} & =   \int_{C_1} \frac{x^m}{m!} h(x)\; dx\\
b_{1,m} & =  \int_{D_1} \frac{x^m}{m!} h(x) \; dx.
\end{align*}
This guarantees that $\mu_2 \big(\Delta f_1\big)=\mu_2\big(\Delta g_1\big)=2$, and 
$\mu_n\big(\Delta f_1\big)=\mu_n\big(\Delta g_1\big)=0$ for $0 \leq n \leq N-1, n \neq 2$.  (Thus the moments of $\Delta f_1$ and $\Delta g_1$ coincide with those of $\delta^{(2)}$ to order $N-1$.  Note that the odd moments of $\Delta f_1$ and $\Delta g_1$ vanish due to the symmetry of $h$.)  In particular, since the zeroth and first moments of $\Delta f_1$ and $\Delta g_1$ are both zero, Corollary \ref{mthantider} guarantees that $f_1$ and $g_1$ are well-defined from \eqref{deltafg1} as elements of $\cM'_{N-3}$.  More strongly, since $f_1$, $g_1$, $\Delta f_1$ and $\Delta g_1$ are all compactly supported, these distributions are all elements of $\cP'$.

Expanding $\Delta f_1 * G$ and $\Delta g_1 * G$ as in \eqref{prezceqn} (with $\Delta f$ replacing $f$) and invoking \eqref{Hermite}, we can describe the edges of $f_1$ and $g_1$ in $w$ and $\sigma$ as the respective zeros of
\begin{equation}
\label{fgexpand}
\begin{split}
\Delta \big( f_1 *G_\sigma \big ) (\sigma w)  &=  \sigma^{-3} H_2(w) G(w)+ \frac{\mu_N \big(\Delta f_1 \big)}{N!} \sigma^{-N-1}H_N(w) G(w)  + \mathcal{O}(\sigma^{-N-2})\\
\Delta  \big( g_1 *G_\sigma \big ) (\sigma w)  &= \sigma^{-3} H_2(w) G(w)+ \frac{\mu_N \big(\Delta g_1 \big)}{N!} \sigma^{-N-1} H_N(w) G(w) + \mathcal{O}(\sigma^{-N-2})\\
&  \qquad (\sigma \rightarrow \infty).
\end{split}
\end{equation}

For $|w|$ close to 1, both coefficients of $\sigma^{-N-1}$ in \eqref{fgexpand} are positive.  This follows since $h$ is positive---hence so are $\mu_N(\Delta f_1)$ and $\mu_N(\Delta g_1)$---and $H_N(\pm 1)$ is positive according to \eqref{explicitHermite} for $N$ a multiple of 4 (as required).  Furthermore, since $C_1 \subset D_1$, we have $\mu_N(\Delta f_1) < \mu_N(\Delta g_1)$ and so the larger of the two coefficients of $\sigma^{-N-1}$ is that associated to $g_1$.  We conclude from this analysis of the coefficients in \eqref{fgexpand} that for any fixed $w \approx \pm 1$, 
\begin{equation}
\label{basecondition0}
\Delta \big( f_1 * G_{\sigma} \big ) \left(\sigma w \right) < \Delta \big( g_1 * G_{\sigma} \big) \left(\sigma w \right)
\end{equation}
for all sufficiently large $\sigma$.

We also observe from \eqref{fgexpand} that $f_1$ and $g_1$ each have (at least) two persistent edge 
contours,  corresponding to the roots $w = \pm 1$ of $H_2(w)=w^2-1$.   By Corollary \ref{onetoone}, there is a unique value of $w$ corresponding to each $\sigma>0$ for each of these edge contours. In particular, by the symmetry of $f_1$ and $g_1$, these edge contours can be parameterized as $w  = \pm e_{f_1}(\sigma)$ and $w = \pm e_{g_1}(\sigma)$.  Since the coefficients of $\sigma^{-N-1}$  in \eqref{fgexpand} are both positive for $|w| \approx 1$ as previously stated, and the coefficients of $\sigma^{-3}$ have the sign of $H_2(w)=w^2-1$, $e_{f_1}(\sigma)$ and $e_{g_1}(\sigma)$ both approach 1 from below as $\sigma \to \infty$ (see Figure \ref{basecaseedges}).   Therefore, for any $w_1$ less than but sufficiently close to $1$, the line $w=w_1$ intersects both edge contours described by $w = e_{f_1}(\sigma)$ and $w = e_{g_1}(\sigma)$.  Combining this observation with \eqref{basecondition0} implies that for $w_1$ less than but sufficiently close to $1$, there is a range of $\sigma$ values satisfying
\begin{equation}
\label{basecondition}
\Delta \big( f_1 * G_{\sigma} \big) \left(\sigma w_1 \right) < 0 <  \Delta \big( g_1 * G_{\sigma} \big) \left(\sigma w_1 \right).
\end{equation}
(See Figure \ref{basecaseedges}.)  Moreover, the upper bound of $\sigma$ values satisfying \eqref{basecondition} increases without bound as $w_1$ increases to 1.  
Fix $w_1$ and $\sigma=\sigma_1$ such that \eqref{basecondition} is satisfied.  We have thus constructed $f_1,g_1 \in \cP' \subset \cM_{N-3}'$ and the pair $(w_1, \sigma_1)$, which collectively serve as a base step for our iterative construction of $f,g \in \cM_{N-3}'$.

\begin{figure}
\begin{center}
\includegraphics{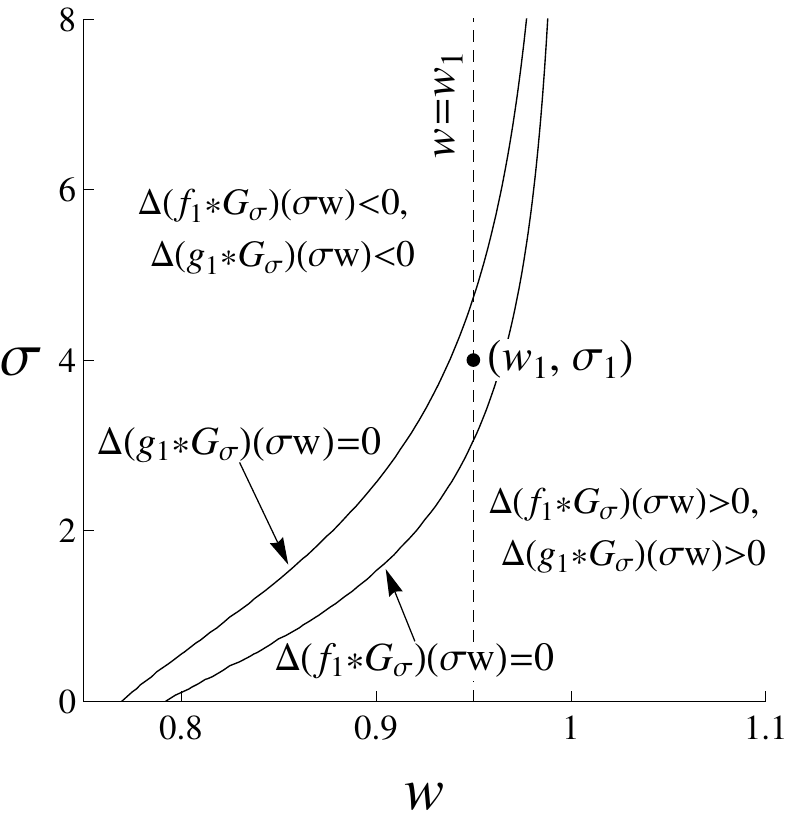}
\caption{The edge contours of example distributions $f_1$ and $g_1$ are shown together with a choice of $w_1$ and $\sigma_1$.}
\label{basecaseedges}
\end{center}
\end{figure}

As the first half of the induction step, we will give for $k \geq 1$ a construction of $f_{k+1}$ from $f_k$ and $g_k$.  The pair $(w_{2k}, \sigma_{2k})$ will be constructed along with $f_{k+1}$.  Once this is accomplished we will contruct, for the second half of the induction, $g_{k+1}$ and the pair $(w_{2k+1},\sigma_{2k+1})$, from $f_{k+1}$ and $g_k$.

First, as an inductive hypothesis, we suppose that for some $k \geq 1$, $f_k, g_k \in \cP' \subset \cM_{N-3}'$ are distributions defined by
\begin{align*}
\Delta f_k & = \delta^{(2)} +  \chi_{C_k} h 
- \sum_{\substack{\text{$m$ even}\\ 0 \leq m \leq N-2}} a_{k,m} \delta^{(m)} \\
\Delta g_k & = \delta^{(2)} +  \chi_{D_k} h 
- \sum_{\substack{\text{$m$ even}\\ 0 \leq m \leq N-2}} b_{k,m} \delta^{(m)}, 
\end{align*}
where $C_k , D_k \subset \R$ are compact and symmetric about the origin, and
\begin{align*}
a_{k,m} & =  \int_{C_k} \frac{x^m}{m!} h(x) \; dx\\
b_{k,m} & =  \int_{D_k} \frac{x^m}{m!} h(x) \; dx, \qquad 0 \leq m \leq N-2,
\end{align*}
so that as in the base case, the moments of $\Delta f_k$ and $\Delta g_k$ agree with those of $\delta^{(2)}$ to order $N-1$.
Suppose as a further inductive hypothesis that there are pairs $(w_1, \sigma_1), \ldots, (w_{2k-1}, \sigma_{2k-1})$, with $\sigma_i$ increasing in $i$, satisfying
\begin{align}
\label{interweave2}
  \Delta \big( f_k * G_{\sigma_i} \big) (\sigma_i w_i ) < 0 <    \Delta \big( g_k * G_{\sigma_i} \big) (\sigma_i w_i ) & \quad 
\text{for $i$ odd}\\
\nonumber
  \Delta \big( f_k * G_{\sigma_i} \big) (\sigma_i w_i ) > 0 >    \Delta \big( g_k * G_{\sigma_i} \big) (\sigma_i w_i ) & \quad 
\text{for $i$ even,} \quad 1 \leq i \leq 2k-1.
\end{align}

The first goal of this induction step is to construct a distribution $f_{k+1} \in \cP'$ of the same form as above,
\[
\Delta f_{k+1} = \delta^{(2)} +  \chi_{C_{k+1}} h 
- \sum_{\substack{\text{$m$ even}\\ 0 \leq m \leq N-2}} a_{k+1,m} \delta^{(m)},
\]
with
\begin{equation}
\label{akp1}
a_{k+1,m} =  \int_{C_{k+1}} \frac{x^m}{m!} h(x) \; dx,
\end{equation}
such that
\renewcommand{\labelenumi}{(\alph{enumi})}
\begin{enumerate}
\item the relationships \eqref{interweave2} are preserved,
\begin{align}
\label{interweave3}
  \Delta \big( f_{k+1} * G_{\sigma_i} \big) (\sigma_i w_i ) < 0 <   \Delta \big( g_k * G_{\sigma_i} \big)  (\sigma_i w_i ) & \quad 
\text{for $i$ odd}\\
\nonumber
  \Delta \big(  f_{k+1} * G_{\sigma_i} \big) (\sigma_i w_i ) > 0 >   \Delta \big(  g_k * G_{\sigma_i} \big) (\sigma_i w_i ) & \quad 
\text{for $i$ even,} \quad 1 \leq i \leq 2k-1,
\end{align}
\item $\mu_N ( \Delta f_{k+1}) > \mu_N (\Delta g_k) + 1$.
\end{enumerate}

We do this by setting $C_{k+1} = C_k \cup C_{k+1}'$ where $C_{k+1}' = [-c_{k+1}, -c_{k+1}'] \cup [c_{k+1}', c_{k+1}]$ for some appropriately chosen positive real numbers $c_{k+1}$ and $c_{k+1}'$ with $c_{k+1} > c_{k+1}' > c_k$,
to be determined later. Note 
\begin{align}
\nonumber
& \quad \big(  ( \Delta f_{k+1} - \Delta f_k )  * G_\sigma \big) (\sigma w)\\
\nonumber
& = \left(  \left(  \chi_{C_{k+1}'} h - \sum_{\substack{\text{$m$ even}\\0 \leq m \leq N-2}} a_{k+1,m}' \delta^{(m)}\right) * G_\sigma \right) (\sigma w)\\
\label{makesmall}
& =  \int_{C_{k+1}'} h(y) G_\sigma (\sigma w - y) \; dy - \sum_{\substack{\text{$m$ even}\\0 \leq m \leq N-2}} a_{k+1,m}' G_\sigma^{(m)} (\sigma w) ,
\end{align}
where
\[
a_{k+1,m}' = \int_{C_{k+1}'} \frac{x^m}{m!} h(x) \; dx = 2 \int_{c_{k+1}'}^{c_{k+1}} \frac{x^m}{m!} h(x) \; dx.
\]

By \eqref{hfinitemoment}, the integrals $\int_\R \frac{x^m}{m!} h(x) \; dx$ converge for all nonegative integers $m \leq N-1$.  It follows that the coefficients $a_{k+1,m}'$ can be made arbitrarily small uniformly over all choices of $c_{k+1}$, by choosing a sufficiently large value of $c_{k+1}'$.  The decay properties of $G_\sigma$ and integrability of $h$ imply that for any fixed $\sigma$ and $w$, the first term of \eqref{makesmall}---and hence the full quantity \eqref{makesmall}---can also be made arbitrarily small uniformly over $c_{k+1}$, by a sufficiently large choice of $c_{k+1}'$.  Since this holds in particular for $w=w_i$ and $\sigma=\sigma_i$, $1 \leq i \leq 2k-1$, we can choose $c_{k+1}'$ such that condition \eqref{interweave3} holds regardless of the value later chosen for $c_{k+1}$, validating condition (a).  We fix such a $c_{k+1}'$.  Then since $h$ is positive and has divergent $N$th moment, a sufficiently large choice of $c_{k+1}$ will guarantee $\mu_N ( \chi_{C_{k+1}}h) > \mu_N (\chi_{D_k} h) + 1$, and hence $\mu_N ( \Delta f_{k+1}) > \mu_N (\Delta g_k) + 1$, validating condition (b).

We now construct the pair $(w_{2k}, \sigma_{2k})$.  By our choice of the coefficients $a_{k+1,m}$ in \eqref{akp1}, the moments of $\Delta f_{k+1}$ coincide with those of $\delta^{(2)}$ through order $N-1$ (as do the moments of $\Delta g_k$ according to our inductive assumption).  Furthermore, condition (b) implies that $\mu_N(\Delta f_{k+1}) > \mu_N(\Delta g_k)$.  These observations enable us, using an argument similar to that used in the base case above, to choose $w_{2k}>0$ and $\sigma_{2k}>\sigma_{2k-1}+1$ satisfying
\begin{equation*}
\label{fextend}
\Delta (f_{k+1} * G_{\sigma_{2k}}) \left(\sigma_{2k} w_{2k} \right) > 0 
> \Delta (g_k * G_{\sigma_{2k}}) \left(\sigma_{2k} w_{2k} \right).
\end{equation*}

This finishes the first half of the induction. We observe that since $\Delta f_{k+1}$ is compactly supported, it is in $\cP'$ and hence also in $\cM_{N-1}'$.

For the second half we construct, in similar fashion, a distribution $g_{k+1} \in \cP' \subset \cM_{N-3}'$ satisfying
\[
\Delta g_{k+1} = \delta^{(2)} +  \chi_{D_{k+1}} h 
- \sum_{\substack{\text{$m$ even}\\ 0 \leq m \leq N-2}} b_{k+1,m} \delta^{(m)},
\]
with
\[
b_{k+1,m} =  \int_{D_{k+1}} \frac{x^m}{m!} h(x) \; dx,
\]
where $D_{k+1} \subset \R$ is compact and symmetric about the origin, such that
\renewcommand{\labelenumi}{(\alph{enumi})}
\begin{enumerate}
\item the relationships \eqref{interweave2} are preserved now for $i$ up to $2k$ rather than $2k-1$,
\begin{align*}
  \Delta \big( f_{k+1} * G_{\sigma_i} \big) (\sigma_i w_i ) < 0 <   \Delta \big( g_{k+1} * G_{\sigma_i} \big)  (\sigma_i w_i ) & \quad 
\text{for $i$ odd}\\
  \Delta \big(  f_{k+1} * G_{\sigma_i} \big) (\sigma_i w_i ) > 0 >   \Delta \big(  g_{k+1} * G_{\sigma_i} \big) (\sigma_i w_i ) & \quad 
\text{for $i$ even,} \quad 1 \leq i \leq 2k,
\end{align*}
\item $\mu_N (\Delta g_{k+1}) > \mu_N (\Delta f_{k+1}) + 1$.
\end{enumerate}
After fixing $g_{k+1}$ we choose $w_{2k+1}$ and $\sigma_{2k+1}>\sigma_{2k}+1$ such that 
\begin{equation*}
\label{gextend}
\Delta (f_{k+1} * G_{\sigma_{2k+1}}) \left(\sigma_{2k+1} w_{2k+1} \right) < 0 
< \Delta (g_{k+1} * G_{\sigma_{2k+1}}) \left(\sigma_{2k+1} w_{2k+1} \right).
\end{equation*}
To summarize, we have constructed distributions $f_{k+1}, g_{k+1} \in \cP' \subset \cM_{N-3}'$ and pairs $(w_{2k}, \sigma_{2k})$ and $(w_{2k+1},\sigma_{2k+1})$ such that \eqref{interweave2} holds with $k$ replaced by $k+1$.  This completes the induction step.  

With the above induction argument we have constructed sequences of distributions $\{f_k\}_{k \geq 1}$, $\{g_k\}_{k \geq 1}$ and pairs $\{(w_i, \sigma_i)\}_{i \geq 1}$ such that \eqref{interweave2} holds for all values of $k$.  We claim that the sequence $\big\{\Delta f_k \big\}_{k \geq 1}$ converges in the weak-* topology on $\cM_{N-1}'$ to the distribution
\begin{equation}
\label{fdef}
\Delta f  = \delta^{(2)} +  \chi_C h - \sum_{\substack{\text{$m$ even}\\ 0 \leq m \leq N-2}} a_m \delta^{(m)},
\end{equation}
where
\begin{equation*}
\label{am}
a_m =  \int_C \frac{x^m}{m!} h(x) \; dx, \qquad C=\bigcup_{k=1}^\infty C_k,
\end{equation*}
and similarly for $\big\{\Delta g_k\big\}_{k \geq 1}$, with $D = \bigcup_k D_k$ in place of $C$.  To verify this claim, consider an arbitrary test function $\phi \in \cM_{N-1}$.  For  each $k \geq 0$ we have
\begin{equation}
\label{fkphi}
\langle \Delta f_k, \phi \rangle 
= \phi^{(2)}(0) +  \int_\R \chi_{C_k}(x) h(x) \phi(x) \; dx 
- \sum_{\substack{\text{$m$ even}\\0 \leq m \leq N-2}} a_{k,m} \phi^{(m)}(0).
\end{equation}
The integrand $\chi_{C_k}(x) h(x) \phi(x)$ of the middle term of \eqref{fkphi} is bounded in absolute value by the function $h(x) |\phi(x)|$---which is integrable by \eqref{hfinitemoment}---and converges pointwise to $\chi_C(x) h(x) \phi(x)$.  It follows from the dominated convergence theorem that the middle term of \eqref{fkphi} converges to the finite quantity
\[
 \int_\R \chi_C(x) h(x) \phi(x) \; dx,
\]
as desired.  To verify convergence of the third term of \eqref{fkphi}, it suffices to show that for each even $m$, $0 \leq m <N$, the sequence $\{a_{k,m}\}_{k \geq 0}$ converges to $a_m$ as given by \eqref{am}.  Since each $a_{k,m}$ is a constant multiple of the integral of $\chi_{C_k}(x) \, x^m h(x)$ and $x^m \in \cM_{N-1}$ for $0 \leq m<N$, convergence of each sequence $\{a_{k,m}\}_{k \geq 0}$ follows with the same argument used to prove convergence of the middle term of \eqref{fkphi}.  We conclude that $\big\{\Delta f_k\big\}$ converges as claimed, and a similar argument establishes the convergence of $\big\{\Delta g_k\big\}$.  

We have thus constructed $\Delta f$ and $\Delta g$ as elements of $\cM_{N-1}'$.  Since $G_\sigma \in \cM_{N-1}$ for each $\sigma > 0$,  the relationships \eqref{interweave2} are preserved under the weak-* limits $\Delta f_k \to \Delta f$, $\Delta g_k \to \Delta g$, with $\leq$ in place of $<$ as in \eqref{interweave1}.  We  define $f, g \in \cM_{N-3}'$ as the second antiderivatives of $\Delta f$ and $\Delta g$ respectively. (This construction is allowed by Corollary \ref{mthantider} since the zeroth and first moments of $\Delta f$ and $\Delta g$ are zero.  It can also be shown that $f$ and $g$ are the respective limits of the sequences $\{f_k\}$ and $\{g_k\}$ in the weak-* topology on $\cM_{N-3}'$, but we will not use this fact.)  

We know the following about $f, g \in \cM_{N-3}'$: They are symmetric about the origin since $C$ and $D$ are.    It can be seen from \eqref{fdef} that the moments of $\Delta f$ coincide with those of $\delta^{(2)}$ through order $N-1$, and thus $f$ and $g$ each have a pair of persistent edge contours, also symmetric about the origin, approaching $w=\pm 1$.  Finally, since \eqref{interweave1} is satisfied, these edge contours of intersect on an infinite sequence of scales.  This completes the first part of the argument.  

We will need that, since Condition (b) on $f_{k+1}$ and $g_{k+1}$ holds for each $k$, 
\[
\mu_N (\Delta f_{k+1}) > \mu_N (\Delta g_k) + 1 \quad \text{and} \quad 
\mu_N (\Delta g_{k+1}) > \mu_N (\Delta f_{k+1}) + 1,
\]
it follows that
\begin{equation}
\label{hinfmoment}
\int_C x^N h(x) \; dx = \infty \qquad \text{and} \qquad \int_D x^N h(x) \; dx= \infty.
\end{equation}
Additionally, since we required $\sigma_{i+1}>\sigma_i+1$ for all $i \geq 1$, the sequence $\{ \sigma_i \}_{i\geq 1}$ is increasing and diverges to positive infinity.

\subsection{Part 2: Non-existence of divergent edge contours}
\label{Part2}

For the second (final) part of the argument, we must show that the persistent edge contours approaching $w=\pm 1$ are the only persistent edge contours of $f$ and $g$.  We prove this for $f$, and the statement for $g$  follows similarly.

We begin by applying the moment expansion (Corollary \ref{Gaussianfinitemoment} with $M=3$) to the distribution $\Delta f\in \cM_{N-1}'$. (Recall $N \geq 4$ and thus $M \leq N-1$ for $M=3$.)  Since the moments of $\Delta f$ coincide with those of $\delta^{(2)}$ through order $N-1$, the quantity
\[
\sigma^{3} \Delta \big( f * G_\sigma \big) (\sigma w) - H_2(w)G(w)
\]
converges to zero locally uniformly in $w$, as $\sigma \to \infty$.  Thus any persistent edge contours of $f$ must either approach the roots $w=\pm 1$ of $H_2(w)$, or diverge in $w$ as $\sigma \rightarrow \infty$.  We now show that the second case cannot occur.

Assume, to the contrary, that a persistent edge contour $Z \subset H_+$ of $f$ diverges to (without loss of generality) $+\infty$ in $w$ as $\sigma \rightarrow \infty$. Define a mapping $\sigma=s(x)$ so
that for each $x$ greater than or equal to some $x_0>0$, $(x,s(x)) \in Z$.  (There is some freedom in this construction, since a line $x=x'$ may intersect $Z$ multiple times.)  By Corollary \ref{nomaximum}, local parameterizations of $Z$ have no local maxima, so $s(x)$ can be chosen to be monotone increasing in $x$.  However, $s(x)$ is not necessarily continuous---it may jump between branches of the set-valued function $S(x)=\{ \sigma: (x,\sigma) \in Z\}$.

For all $x \geq x_0$, $(x,s(x))$ lies on an edge contour of $f$, so convolving \eqref{fdef} with $G_\sigma(x)$ and applying \eqref{Hermite} yields
\begin{multline}
\label{fedge}
0 = (\Delta f)*G_{s(x)} (x)\\
 = \big((\chi_C h)*G_{s(x)} \big) (x) + s(x)^{-2} H_2 \left(\frac{x}{s(x)} \right) G_{s(x)} (x)\\  
 - \sum_{\substack{\text{$m$ even}\\ 0 \leq m \leq N-2}} a_m s(x)^{-m} H_m \left(\frac{x}{s(x)} \right) G_{s(x)} (x).
\end{multline}
(Here expressions $ f *G_{s(x)} (x)$ are calculated by first evaluating the convolution $f*G_\sigma (x)$ and then substituting $\sigma = s(x)$. The argument of $s$ is thus not considered part of the argument of $G_{s(x)} (\placehold)$ in the convolution.)

Since $Z$ diverges to infinity in $w=x/s(x)$, we have
\[
\lim_{x \rightarrow \infty} \frac{x}{s(x)} = \infty.
\]
We consider two cases, depending on the asymptotic behavior of $x/s(x)^2$.

\paragraph*{Case 1: $\displaystyle \liminf_{x \rightarrow \infty} x/s(x)^2=0$.}  In this case we rewrite  the right-hand side of \eqref{fedge} as a sum of two expressions (separately enclosed in parentheses):
\begin{multline}
\label{twoexpressions}
\Big( \big((\chi_C h)*G_{s(x)} \big) (x) - a_0 G_{s(x)} (x) \Big) \\
+ \left( (1-a_2) s(x)^{-2} H_2 \left(\frac{x}{s(x)} \right)  - 
\sum_{\substack{\text{$m$ even}\\4 \leq m \leq N-2}}
 a_m s(x)^{-m} H_m \left(\frac{x}{s(x)} \right) \right) G_{s(x)} (x).
\end{multline}
We will show that there is an $x$ for which both of these expressions are positive, contradicting \eqref{fedge}.

For the first expression in \eqref{twoexpressions} we consider the function
\[
Q(x,\sigma) = \chi_C h * G_{\sigma} (x) - a_0 G_{\sigma} (x).
\]
We prove in Lemma \ref{Qlemma} below that for each $\sigma>0$, $Q(x,\sigma)$ has exactly two zeros in $x$, is negative for $x$ between these zeros, and is positive for $x$ outside of them.  Furthermore, the zeroth moment $\mu_0 \big( \chi_C h - a_0 \delta^{(0)} \big )$ vanishes by the definition of $a_0$, while the first moment $\mu_1 \big( \chi_C h - a_0 \delta^{(0)} \big )$ vanishes since $\chi_C h$ is symmetric.  Moment expansion (Corollary \ref{Gaussianfinitemoment} with $M=3$), applied to the distribution $\chi_C h - a_0 \delta^{(0)}$, therefore implies that the quantity
\[
\sigma^{3} \left( (\chi_C h - a_0 \delta^{(0)} ) * G_\sigma \right) (\sigma w) - \frac{\mu_2 \left(\chi_C h - a_0 \delta^{(0)} \right)}{2!} H_2(w)G(w) 
\]
converges to zero locally uniformly in $w$, as $\sigma \to \infty$.  It follows that, as $\sigma \to \infty$, the two zero curves of $Q(x,\sigma)$ approach the lines $x= \pm \sigma$, corresponding to the zeros $w=\pm 1$ of $H_2(w)$.  

Since $\lim_{x \rightarrow \infty} x/s(x) = \infty$, the point $(x, s(x))$ lies outside of the two zero curves of $Q(x,\sigma)$ for all sufficiently large $x$. Recalling that $Q$ is positive outside these curves, we have that $Q \big(x,s(x) \big)$---which is equal to the first expression of \eqref{twoexpressions}---is positive for sufficiently large $x$.

The sign of the second expression of \eqref{twoexpressions} is that of the polynomial
\begin{equation}
\label{secondexp}
(1-a_2) s(x)^{-2} H_2 \left(\frac{x}{s(x)} \right)  - 
\sum_{\substack{\text{$m$ even}\\4 \leq m \leq N-2}} a_m s(x)^{-m} H_m \left(\frac{x}{s(x)} \right).
\end{equation}
Since $\lim_{x \rightarrow \infty} x/s(x) = \infty$, each of the Hermite polynomials $H_m \big(x/s(x) \big)$ in \eqref{secondexp} becomes dominated as $x \to \infty$ by its highest-order term, $\big (x/s(x) \big)^m$.  Thus, for sufficiently large $x$, the sign of \eqref{secondexp} coincides with the sign of 
\begin{align}
\nonumber
& (1-a_2) s(x)^{-2} \left(\frac{x}{s(x)} \right)^2  - 
\sum_{\substack{\text{$m$ even}\\4 \leq m \leq N-2}} a_m s(x)^{-m} \left(\frac{x}{s(x)} \right)^m\\
\label{secondexp2}
= &  (1-a_2) \left(\frac{x}{\big(s(x) \big)^2} \right)^2  - 
\sum_{\substack{\text{$m$ even}\\4 \leq m \leq N-2}} a_m \left(\frac{x}{\big(s(x) \big)^2} \right)^m.
\end{align}
This expression is a polynomial in the variable $x/\big( s(x) \big)^2$.  Since we have assumed (for Case 1) that $\liminf_{x \rightarrow \infty} x/\big( s(x) \big)^2 = 0$, there exist arbitrarily large $x$ for which the sign of \eqref{secondexp2} coincides with the sign of its lowest-order term's coefficient $1-a_2$.  By Condition (iii) on $h$ (the bound on the second moment of $h$; see beginning of this section),
\[
1-a_2 = 1-\int_C \frac{x^2}{2!} h(x) \; dx > 0.
\]
Thus there exist arbitrarily large $x$ for which \eqref{secondexp2}---and hence also the second expression of \eqref{twoexpressions}---is positive.  Since the first expression of \eqref{twoexpressions} is positive for sufficiently large $x$, there are values of $x$ for which both expressions in \eqref{twoexpressions} are positive, contradicting \eqref{fedge}.

\paragraph*{Case 2: $\displaystyle \liminf_{x \rightarrow \infty} x/s(x)^2>0$.}  In this case we multiply both sides of \eqref{fedge} by $\sqrt{2\pi}s(x)$ and rewrite as
\begin{multline}
\label{xsrewrite2}
\left( s(x)^{-2} H_2 \left(\frac{x}{s(x)} \right)  - 
\sum_{\substack{\text{$m$ even}\\0 \leq m < N}}  a_m s(x)^{-m} H_m \left(\frac{x}{s(x)} \right) \right)
 \exp \left( -\frac{x}{2} \frac{x}{s(x)^2} \right)\\
+ \int_{C} \exp \left(-\frac{(x-y)^2}{2 s(x) ^2} \right) h(y) \; dy = 0.
\end{multline}
Since (in Case 2) $x/s(x)^2$ is bounded below for sufficiently large $x$, the quantity $\displaystyle  \exp \left( -\frac{x}{2} \frac{x}{s(x)^2} \right)$ is bounded above by an exponentially decreasing function of $x$.  Further, since $s(x)$ is monotone increasing, $s(x)^{-1}$ is bounded, and so the polynomial
\[
s(x)^{-2} H_2 \left(\frac{x}{s(x)} \right)  - 
\sum_{\substack{\text{$m$ even}\\0 \leq m \leq N-2}}  a_m s(x)^{-m} H_m \left(\frac{x}{s(x)} \right)
\]
has at most polynomial growth in $x$.  Combining these bounds, it follows that the first term of \eqref{xsrewrite2} is absolutely bounded above for all sufficiently large $x$ by a function $K e^{-\gamma x}$, with $K,\gamma>0$.
The two terms of \eqref{xsrewrite2} sum to zero, so the second term also satisfies this bound, giving
\begin{equation}
\label{secondtermbound}
Ke^{-\gamma x} > \int_{C} \exp \left( -\frac{(x-y)^2}{2 s(x) ^2} \right) h(y) \; dy,
\end{equation}
for sufficiently large $x$.  Also for $x$ sufficiently large,
\[
\exp \left( -\frac{(x-y)^2}{2 s(x) ^2} \right) > \frac{1}{2} \chi_{[-1,1]}(x-y),
\]
with $\chi_{[-1,1]}$ an indicator function as above. Combining with \eqref{secondtermbound} yields
\[
Ke^{-\gamma x} > \frac{1}{2} \int_{x-1}^{x+1} \chi_C(y) h(y) \; dy,
\]
again for sufficiently large $x$. Multiplying by $x^{N}$ and integrating from a sufficiently large $x_0$ to infinity, 
\[
K \int_{x_0}^\infty x^{N} e^{-\gamma x} \; dx > \frac{1}{2} \int_{x_0}^\infty
x^N  \int_{x-1}^{x+1} \chi_C(y) h(y) \; dy \, dx.
\]
The left-hand side is finite, thus the right-hand side is finite as well.  Interchanging order of integration on the right-hand side and noting that the integrand is nonnegative,
\begin{align*}
\frac{1}{2} \int_{x_0}^\infty x^N  \int_{x-1}^{x+1} \chi_C(y) h(y) \; dy \, dx
& \geq \frac{1}{2} \int_{x_0+1}^\infty \left(\int_{y-1}^{y+1} x^N \; dx \right) \chi_C(y) h(y) \; dy\\
& \geq  \frac{1}{2} \int_{x_0+1}^\infty 2 (y-1)^N \chi_C(y) h(y) \; dy\\
& = \int_{x_0+1}^\infty \left( \frac{y-1}{y} \right)^N y^N \chi_C(y) h(y) \; dy\\
& \geq \left(\frac{x_0}{x_0+1} \right)^N \int_{x_0+1}^\infty y^N \chi_C(y) h(y) \; dy.
\end{align*}
Thus
\[
\int_{x_0+1}^\infty y^N \chi_C (y) h(y) \; dy < \infty.
\]
Since $\chi_Ch$ is symmetric, it follows that
\[
\int_C y^N h(y) \; dy < \infty.
\]
But this contradicts the requirement \eqref{hinfmoment} that the $N$th moment of $\chi_C h$ diverges.  Thus this case is also impossible, so there are no edge contours of $f$ that diverge in $w$. 

A similar argument (with $g$ in place of $f$ and $D$ in place of $C$, starting from the beginning of Section \ref{Part2}) shows also that no edge contours of $g$ diverge in $w$.  We conclude that the only persistent edge contours of $f$ and $g$ are those that approach $w=\pm 1$.  We showed in the first part (Section \ref{Part1}) that these edge contours intersect on a sequence of scales tending to infinity.  Though $f$ and $g$ are distributions (rather than functions) we can take the convolutions $f*G$ and $g*G$ as initial functions to obtain a violation of Marr's conjecture.  This completes the proof of Corollary \ref{cor:OneGaussian}(c).  

In Section \ref{Part2}, Case 1, we made use of the following lemma (with $\tilde{h} = \chi_C h$):
\begin{lemma}
\label{Qlemma}
Let $\tilde{h} \in L^1(\R)$ be nonnegative and symmetric about $0$. Define $a_0 = \int_\R \tilde{h}(x) \; dx$, and suppose $a_0>0$.  For $x, \sigma \in \R$, $x>0$, define
\[
Q(x,\sigma) = -a_0 G_\sigma(x) + \tilde{h} * G_\sigma(x).
\]
Then for each $\sigma>0$, $Q(x,\sigma)$ has exactly two zeros in $x$, is negative between these zeros, and positive outside of them.
\end{lemma}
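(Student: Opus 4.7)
The plan is to factor out the positive Gaussian and rewrite the remaining factor in a form where the sign structure in $x$ is transparent. Since $a_0=\int_\R \tilde h(y)\,dy$, I would pull the difference inside the convolution and use the identity $(x-y)^2-x^2=-2xy+y^2$ to obtain
\[
Q(x,\sigma)=\int_\R \tilde h(y)\bigl[G_\sigma(x-y)-G_\sigma(x)\bigr]dy=G_\sigma(x)\int_\R \tilde h(y)\bigl[e^{xy/\sigma^2-y^2/(2\sigma^2)}-1\bigr]dy.
\]
Then, exploiting the symmetry $\tilde h(y)=\tilde h(-y)$, I would replace $y$ by $-y$ in the integrand, average the two resulting expressions, and arrive at
\[
Q(x,\sigma)=G_\sigma(x)\,R(x,\sigma),\qquad R(x,\sigma):=\int_\R \tilde h(y)\,e^{-y^2/(2\sigma^2)}\cosh(xy/\sigma^2)\,dy-a_0.
\]

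Next I would analyze $R(\,\cdot\,,\sigma)$ for fixed $\sigma>0$. It is even in $x$ since $\cosh$ is even. Differentiating under the integral (justified by the dominating bound $\tilde h(y)e^{-y^2/(2\sigma^2)}|\sinh(xy/\sigma^2)|\le \tilde h(y)e^{|x||y|/\sigma^2-y^2/(2\sigma^2)}$, which is integrable uniformly on compact $x$-sets) gives, for $x>0$,
\[
\partial_x R(x,\sigma)=\sigma^{-2}\int_\R y\,\tilde h(y)\,e^{-y^2/(2\sigma^2)}\sinh(xy/\sigma^2)\,dy,
\]
and the integrand is pointwise nonnegative because $y$ and $\sinh(xy/\sigma^2)$ share a sign. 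It is strictly positive on a set of positive measure: the hypothesis $a_0>0$ together with the $L^1$ regularity of $\tilde h$ forces $\tilde h$ to be nonzero on a positive-measure subset of $\R\setminus\{0\}$, and by symmetry on a positive-measure subset of $(0,\infty)$. Hence $R(\,\cdot\,,\sigma)$ is strictly increasing on $[0,\infty)$.

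Finally I would check the boundary values. At $x=0$, $R(0,\sigma)=\int_\R \tilde h(y)[e^{-y^2/(2\sigma^2)}-1]\,dy<0$, since the bracket is nonpositive and strictly negative where $\tilde h>0$ and $y\ne 0$. As $x\to\infty$, $\cosh(xy/\sigma^2)\to\infty$ for each $y\ne 0$, so monotone convergence yields $R(x,\sigma)\to\infty$. Combined with evenness and strict monotonicity on $[0,\infty)$, this gives exactly one positive root $x_0(\sigma)$ and exactly one negative root $-x_0(\sigma)$ of $R(\,\cdot\,,\sigma)$, with $R<0$ between them and $R>0$ outside. Multiplying by the strictly positive factor $G_\sigma(x)$ transfers the same sign pattern to $Q$, completing the proof. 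I do not anticipate a serious obstacle here; the only delicate points are justifying the differentiation under the integral and verifying that $\tilde h$ is not concentrated at the origin, both of which follow routinely from $\tilde h\in L^1(\R)$ and $a_0>0$.
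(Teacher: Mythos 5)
Your proposal is correct and follows essentially the same route as the paper's proof: both factor out (or divide by) the Gaussian, use the symmetry of $\tilde h$ to obtain the integral $\int_0^\infty \tilde h(y)\,e^{-y^2/(2\sigma^2)}\cosh(xy/\sigma^2)\,dy$, observe negativity at $x=0$, and use the monotone unbounded growth of this quantity in $|x|$ to conclude there are exactly two zeros with the stated sign pattern. Your version merely makes explicit a few points the paper leaves implicit (differentiation under the integral, strict monotonicity, and that $\tilde h$ is not concentrated at the origin), which is fine.
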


\begin{proof}
First we show that $Q(0,\sigma)<0$ for each $\sigma>0$.  To see this we expand
\[
Q(0,\sigma) = \frac{1}{\sqrt{2 \pi}\sigma} \left( -a_0 + \int_\R \tilde{h}(y) e^{-y^2/(2 \sigma^2)} \; dy \right).
\]
Since $a_0 = \int_\R \tilde{h}(x) \; dx$ and $e^{-y^2/(2 \sigma^2)} <1$ for each $y \neq 0$ and $\sigma > 0$, it follows that $Q(0,\sigma)<0$.

We now consider the absolute ratio of the two terms in $Q(x,\sigma)$.  This ratio can be written
\[
\left(\tilde{h} * G_\sigma(x) \right) \Big / \big(a_0 G_\sigma(x) \big)
 = \frac{1}{a_0} \int_\R \tilde{h}(y) e^{(2xy-y^2)/(2\sigma^2)} \; dy.
\]
Using symmetry of $h$, we have
\begin{equation}
\label{Qratio}
\begin{split}
\left(\tilde{h} * G_\sigma(x) \right) \Big / \big(a_0 G_\sigma(x) \big)
& = \frac{1}{a_0} \int_0^\infty \tilde{h}(y) \left( e^{(2xy-y^2)/(2\sigma^2)} + e^{(-2xy-y^2)/(2\sigma^2)} \right) \; dy\\
& = \frac{2}{a_0} \int_0^\infty \tilde{h}(y) \, e^{-y^2/(2\sigma^2)} \, \cosh \left( \frac{xy}{\sigma^2} \right) \; dy.
\end{split}
\end{equation}
Since $\cosh(xy/\sigma^2)$ grows monotonically without bound in $|x|$ for each fixed $y>0$ and $\sigma>0$, it follows that the ratio \eqref{Qratio} grows monotonically without bound in $|x|$ for fixed $\sigma>0$.  We  conclude that for each fixed $\sigma$, the second term of $Q(x,\sigma)$ eventually surpasses the first in magnitude as $x$ grows in absolute value, and that the first term never subsequently equals the second in magnitude as $|x|$ increases further.  Combining this with our initial observation $Q(0,\sigma)<0$, it follows that $Q(x,\sigma)$ has exactly two zeros in $x$ for each $\sigma>0$.
\end{proof}

%%%%%%%%%%%%%%%%%%%%%%%%%%%%%%%%%%%%%%%%%%%%%%%%%%%%%%%%%%%%%%%%%%%%%%%%%%%%%%%%%%%%%%%%%%%%%%%%%%%%%%%%%%%%%%%%%%%%%%%%%%%%%%

\section{Uniqueness of Heat Equation Solutions}
\label{heat}

Our results also yield a uniqueness condition for solutions to the heat equation \eqref{heateqn}.  If it is known that $F(x,t)$ solves \eqref{heateqn} for some initial condition $f \in \cP_\gamma' \cap L^1(\R^d)$, then by Corollary \ref{cor:OneGaussian}(a), both $f$ and $F$ are uniquely determined (up to a multiplicative constant) by the zeros of $F_{xx}(x, t_j)$ for any sequence $\{t_j\}$ of positive reals with a positive or infinite limit point.

As stated in Theorem \ref{thm:heat}, a similar result holds for the zeros of $F$ rather than $F_{xx}$ provided it is known that that the second integral
\begin{equation}
\label{secondint}
a(x) = \int_{-\infty}^x \int_{-\infty}^y f(z) \; dz \, dy
\end{equation}
is in $\cP_\gamma' \cap L^1(\R^d)$.  (In particular this requires $\mu_0(f)=\mu_1(f)=0$.)  Letting $A(x,t)$ be the heat equation solution with initial condition $A(x,0)=a(x)$, Theorem \ref{thm:heat} follows from applying Corollary \ref{cor:OneGaussian}(a) to the zeros of $A_{xx}=F$. 

The condition $a \in \cP_\gamma' \cap L^1(\R^d)$ above cannot be dispensed with.  To see this, let $f_1(x)$ and $f_2(x)$ be distinct anti-symmetric functions that are positive for $x >0$ and negative for $x<0$.  The respective solutions of \eqref{heateqn} with initial conditions given by such $f_1$ and $f_2$ have the same zero set, consisting only of the line $x=0$.  In this case, $f_1$ and $f_2$ have positive first moment, so their respective second integrals $a_1$ and $a_2$, defined as in \eqref{secondint}, are not in $\cP_\gamma' \cap L^1(\R^d)$.

Theorem \ref{thm:heat} appears to be a new type of uniqueness theorem
for the heat equation.  In particular, it requires a type of global
agreement between two functions in order to imply their identity.  In
contrast, most heat equation uniqueness theorems \cite[e.g.]{Lin,
Chen} are based on local agreement to infinite order.

%%%%%%%%%%%%%%%%%%%%%%%%%%%%%%%%%%%%%%%%%%%%%%%%%%%%%%%%%%%%%%%%%%%%%%%%%%%%%%%%%%%%%%%%%%%%%%%%%%%%%%%%

\section*{Acknowledgements}

The authors thank Michael Filaseta and Vladimir Temlyakov for useful discussions, in particular for pointing out the theorem of Schur on irreducibility of Hermite polynomials,  required for the proof of Corollary \ref{cor:OneGaussian}(a).  We also thank David Fried for helpful discussions on the algebraic geometry of analytic functions.

\bibliographystyle{plain}
\bibliography{marr}

\end{document}